\newcolumntype{L}{>{\raggedright\arraybackslash}X}
\numberwithin{equation}{section}
\newtheorem{theorem}{Theorem}[section]
\newtheorem{cor}[theorem]{Corollary}
\newtheorem{lemma}[theorem]{Lemma}
\newtheorem{remark}[theorem]{Remark}
\newtheorem{defin}[theorem]{Definition}
\newcommand{\cH}{\mathcal{H}}
\newcommand{\cJ}{\mathcal{J}}
\newcommand{\cM}{\mathcal{M}}
\newcommand{\cP}{\mathcal{P}}
\newcommand{\cQ}{\mathcal{Q}}
\newcommand{\cR}{\mathcal{R}}
\newcommand{\cT}{\mathcal{T}}
\newcommand{\R}{\mathbb{R}}
\newcommand{\C}{\mathbb{C}}
\newcommand*{\E}{\mathbb{E}}
\newcommand{\plusminus}{\raisebox{.2ex}{$\scriptstyle\pm$}}
\DeclareMathOperator*{\argmax}{argmax}
\renewcommand{\epsilon}{\varepsilon}
\newcommand{\imgpad}[1]{%
  \begin{minipage}[c]{.84\linewidth}
  \vspace{3pt}\centering
  \includegraphics[width=\linewidth]{#1}%
  \vspace{3pt}
  \end{minipage}%
}
\crefname{section}{\mbox{\S\!\!}}{\mbox{\S\!\!}}
\newcommand{\cMsym}{\cM_{\mathrm{inv}}}
\def\blfootnote{\gdef\@thefnmark{}\@footnotetext}
\begin{document}

\title{Min-Max Optimization Is Strictly Easier Than \\ Variational Inequalities}

	 \author{
	 	Henry Shugart
	 	\\	UPenn \\	\texttt{hshugart@upenn.edu}
	 	\and
	 	Jason M. Altschuler
	 	\\	UPenn \\	\texttt{alts@upenn.edu}
	 }
     \date{}
\maketitle

\begin{abstract}
    Classically, a mainstream approach for solving a convex-concave min-max problem is to instead solve the variational inequality problem arising from its
    first-order optimality conditions. Is it possible to solve min-max problems faster by bypassing this reduction? This paper initiates this investigation. We show that the answer is yes in the textbook setting of unconstrained quadratic objectives: the optimal convergence rate for first-order algorithms is strictly better for min-max problems than for the corresponding variational inequalities. The key reason that min-max algorithms can be faster is that they can exploit the asymmetry of the min and max variables---a property that is lost in the reduction to variational inequalities. 
    Central to our analyses are sharp characterizations of optimal convergence rates
    in terms of extremal polynomials which we compute using 
    Green's functions and conformal mappings. 
\end{abstract}

\section{Introduction}\label{sec:intro}

This paper shows a fundamental gap between two well-studied classes of problems. The first is \emph{min-max problems} with convex-concave objectives $f$: find a saddle-point solution $z^* = (x^*,y^*)$ to
\begin{align}\label{eq:minmax}
    \min_x \max_y \, f(x,y)\,.
\end{align}
The second is \emph{variational inequality (VI) problems} with monotone operators $F$: find $z^*$ satisfying
\begin{align}\label{eq:variational}
    \langle F(z^*), \, z-z^* \rangle \geq 0\,, \quad \forall z\,.
\end{align}
Classically, these two problems are intimately connected because the former problem~\eqref{eq:minmax} can be cast as an instance of the latter problem~\eqref{eq:variational} by considering first-order optimality conditions, concatenating the variables $z= (x,y)$, and defining $F(z) = (\nabla_x f(z), -\nabla_y f(z))$ which is guaranteed to be a monotone operator whenever $f$ is convex-concave~\citep{rockafellar1970convex}.

\par Today, this classical connection is central to much of modern algorithm design for min-max problems: simply appeal to standard VI algorithms. This reduction is popular for good reasons: it enables leveraging powerful existing algorithms, it is typically quite effective in both theory and practice, and it is flexible to different problem settings. 
See for example the textbooks \citep{rockafellar1970convex,ryu2022large}. 

\par The ubiquity of this reduction necessitates a fundamental (and remarkably unstudied) question: is it possible to solve min-max problems faster by bypassing this reduction? In other words, does solving the more general problem~\eqref{eq:variational} inherently lead to worse algorithmic guarantees than solving the more specific problem~\eqref{eq:minmax}?

\subsection{Contribution}\label{ssec:cont}

This paper initiates this investigation. We show that the answer is yes in the classical setting of unconstrained quadratic objectives. This uncovers a fundamental gap between the algorithmic complexity of convex-concave min-max problems~\eqref{eq:minmax} and the corresponding VI problems~\eqref{eq:variational}.

\par Specifically, we prove that the optimal convergence rate obtained by first-order algorithms is strictly better for the former than for the latter. We characterize this gap for unconstrained, smooth, and (possibly strongly) convex-concave quadratic $f$ and their corresponding monotone operators $F$. In these settings, we can express the optimal convergence rate in terms of an extremal polynomial problem of the form $\min_{p} \max_{\lambda \in S} |p(\lambda)|$ where $p$ ranges over polynomials whose degree is bounded in terms of the number of iterations that the algorithm is run, and $S$ is a ``spectral range'' (i.e., the set of all possible eigenvalues for an associated linear operator). Importantly, $S$ is an interval for the min-max problem but is a half-disc in $\C$ for the VI problem. In particular, modulo rotation (which is irrelevant for the extremal polynomial problem), the spectral range $S$ in the min-max problem is a strict subset of the spectral range $S$ in the VI problem. This makes the resulting value $\min_{p} \max_{\lambda \in S} |p(\lambda)|$ smaller---and therefore the convergence rate faster---for min-max problems. This gap is precisely quantified by a certain measure of the relative size of the spectral ranges $S$, namely via the ratio of (certain quantities of) the Green's functions for the sets $S$. Combining these ideas, we establish that the optimal convergence rate is faster for min-max problems than VI problems by a factor of $3\sqrt{3}/4 \approx 1.3$ for the strongly-convex-strongly-concave setting and $3\sqrt{3}/2 \approx 2.6$ for the convex-concave setting. Note that in order to prove a separation, we establish a lower bound on the convergence of symmetric algorithms which is slower (by the aforementioned factors) than an upper bound we establish for the convergence of asymmetric algorithms. All our upper and lower bounds are order-optimal. See~\cref{tab:SCSC,tab:CC} for a summary.

\par This modest but fundamental gap uncovers a missed algorithmic opportunity for solving min-max problems.
In particular, our result shows that in order to obtain optimal convergence rates, one must directly design algorithms for min-max problems rather than rely on the classical reduction to VI. This is true even if the min-max problem has identical\footnote{An orthogonal line of work has developed fast asymmetric algorithms for min-max settings in which the optimization problems for $x$ and $y$ have asymmetric structural assumptions, such as differing smoothness or strong convexity parameters~\citep{Lin_Jin_Jordan_2020, Chae_Kim_Kim_2023, wang2020improved, kovalev2022first, heusel2017gans}. The thesis of this paper is complementary: even if the structural assumptions are symmetric in $x$ and $y$, asymmetric algorithms enable faster convergence.} structural assumptions in $x$ and $y$.
A key distinction in such ``direct'' algorithms is that they exploit the intrinsic \emph{asymmetry} of the $x$ and $y$ variables arising in min-max optimization (as opposed to the VI approach which concatenates the variables $z=(x,y)$ at the outset and then treats them symmetrically). 
This asymmetry is a key aspect of a few min-max algorithms, such as gradient-descent-ascent with slingshot stepsizes~\citep{shugart25} or alternating stepsizes~\citep{Zhang_Wang_Lessard_Grosse_2022,Lee_Cho_Yun_2024,Lu_Singh_Chen_Chen_Hong_2019}.
Indeed, a primary motivation of this paper is that the convergence rate of slingshot stepsizes for quadratic min-max problems was better than all existing algorithms for the corresponding VI problems, including even algorithms that use momentum, extragradients, optimism, etc~\citep{shugart25}. The results of this paper show that this gap is fundamental: no symmetric algorithm can converge as fast.

\paragraph*{Outlook.} This paper focuses on demonstrating this phenomenon in its most foundational form. The algorithmic opportunity we uncover opens the door to several new directions for future work, such as showcasing larger gaps in more general settings (e.g., general convex-concave $f$ that are not necessarily quadratic),
exploiting these gaps algorithmically (e.g., as done with our slingshot stepsizes in~\citep{shugart25} for the quadratic setting), and investigating how this gap changes for different algorithm classes (e.g., algorithms that use higher-order information). We believe that this new line of inquiry will lead to a finer-grained complexity of these fundamental problems as well as lead to better algorithms that exploit the missed algorithmic opportunity we uncover. 

\vspace{0.2in}

\begin{table}[H]
\centering
\setlength{\tabcolsep}{6pt}
\renewcommand{\arraystretch}{1.4} %
\begin{tabular}{!{\vrule width 1.2pt}c|>{\centering\arraybackslash}m{0.33\linewidth}|>{\centering\arraybackslash}m{0.33\linewidth}!{\vrule width 1.2pt}}
\Xhline{1.2pt}
\rule{0pt}{1.1em}
 & \textbf{Convex-Concave Min-Max} & \textbf{Monotone VI} \\
\Xhline{1.2pt}
\rule{0pt}{1.1em} Convergence rate &
$\frac{\|\nabla f(z_T)\|}{\|z_0 - z^*\|} \le \frac{L}{T}$ &
$\frac{\|\nabla f(z_T)\|}{\|z_0 - z^*\|} \geq \frac{3\sqrt{3}}{2} \, \frac{L}{T}$ \\[3pt] \hline
\rule{0pt}{1.1em} Spectral range &
\adjustbox{valign=c}{\imgpad{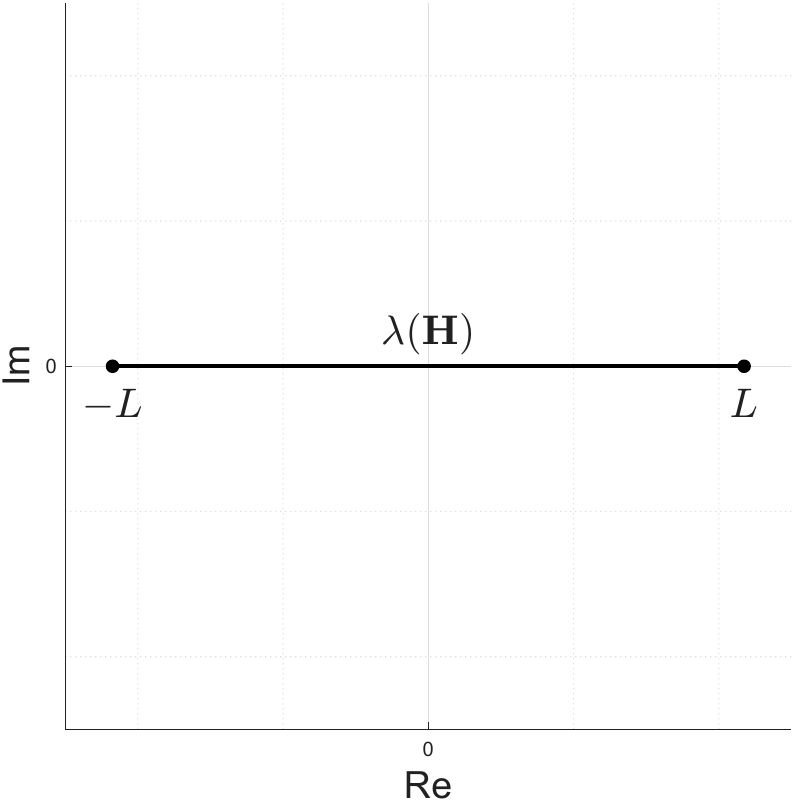}} &
\adjustbox{valign=c}{\imgpad{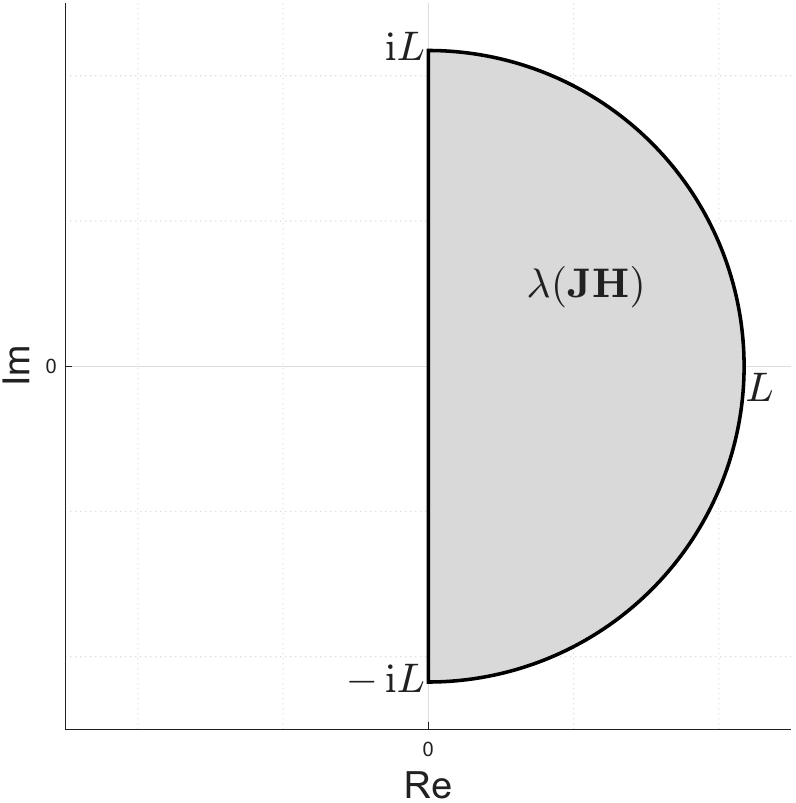}} \\[2pt] \hline
\rule{0pt}{1.1em} Green's function &
$\frac{\partial}{\partial \mathbf{n}} g(0)=1$ &
$\frac{1}{2} \frac{\partial}{\partial \mathbf{n}} g(0)=\frac{2}{3\sqrt{3}}$ \\[2pt]
\Xhline{1.2pt}
\end{tabular}
\caption{\footnotesize \textbf{Top:} We establish a fundamental gap between the fastest possible convergence rate for convex-concave quadratic min-max optimization (\textbf{left}) and the corresponding monotone variational inequalities (\textbf{right}). Here $T$ denotes the number of iterations and $L$ denotes the smoothness.
For simplicity we omit lower-order terms $o(1/T)$; see~\cref{thm:CC-ub,thm:CCLB} for full details. \textbf{Middle:} the underlying geometric reason for this algorithmic gap is that the relevant spectral shape is a strictly smaller subset of $\C$ for min-max problems than for VI problems. \textbf{Bottom:} This gap is precisely quantified by (the derivative of) Green's function for these spectral shapes. The additional factor of $1/2$ appears because the spectral range has positive Lebesgue measure (see~\cref{lem:prescribedroots}).}
\label{tab:CC}
\end{table}

\begin{table}[H]
\centering
\setlength{\tabcolsep}{6pt}
\renewcommand{\arraystretch}{1.4} %
\begin{tabular}{!{\vrule width 1.2pt}c|>{\centering\arraybackslash}m{0.33\linewidth}|>{\centering\arraybackslash}m{0.33\linewidth}!{\vrule width 1.2pt}}
\Xhline{1.2pt}
\rule{0pt}{1.1em} %
 & \textbf{SCSC Min-Max} & \textbf{Strongly-Monotone VI} \\
\Xhline{1.2pt}
\rule{0pt}{1.1em} Convergence rate &
$\frac{\|z_T-z^*\|}{\|z_0 - z^*\|} \le \exp\!\left(-\frac{T}{\kappa}\right)$
&
$\frac{\|z_T-z^*\|}{\|z_0 - z^*\|} \geq \exp\!\left(-\frac{4}{3\sqrt{3}}\frac{T}{\kappa}\right)$ \\[3pt] \hline
\rule{0pt}{1.1em}Spectral range &
\adjustbox{valign=c}{\imgpad{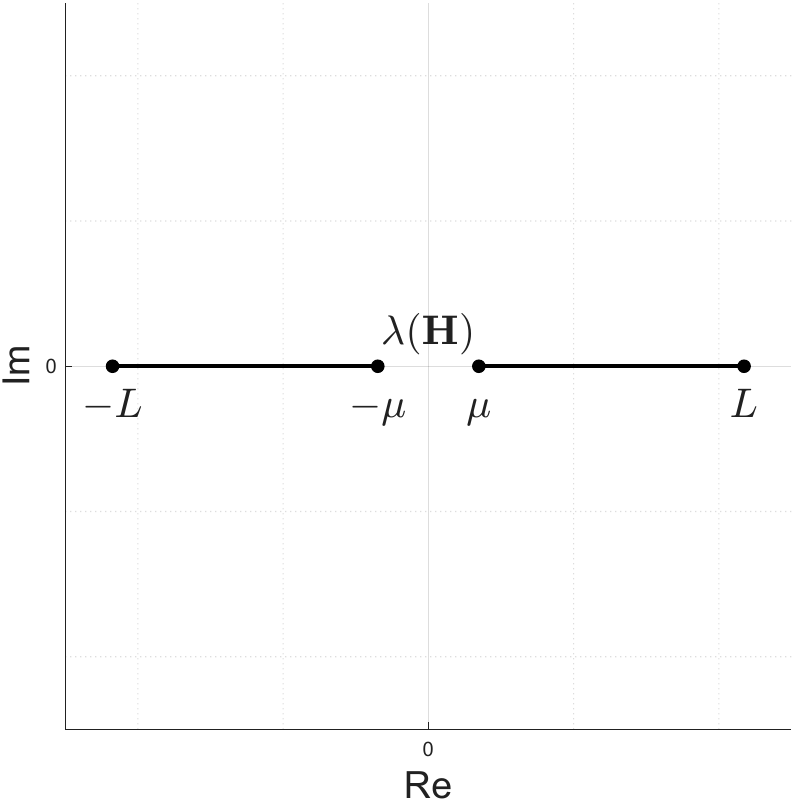}} &
\adjustbox{valign=c}{\imgpad{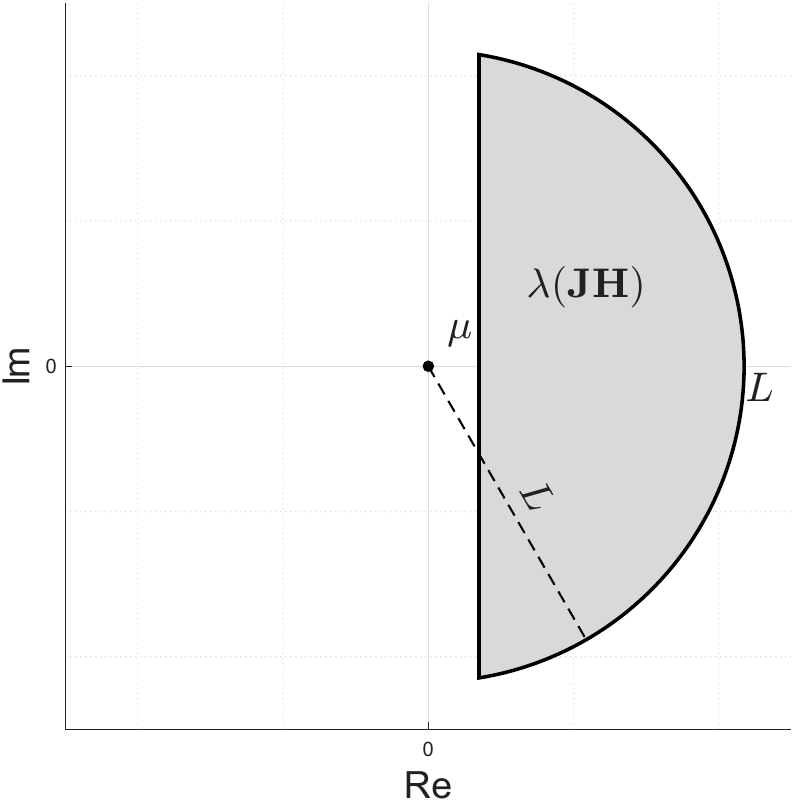}} \\ \hline
\rule{0pt}{1.1em}Green's function &
$g(0)\approx \frac{1}{\kappa}$ &
$g(0)\approx \frac{4}{3\sqrt{3}} \, \frac{1}{\kappa}$ \\[2pt]
\Xhline{1.2pt}
\end{tabular}
\caption{\footnotesize Analog of~\cref{tab:CC} for \textbf{strongly-convex-strongly-concave settings}. Here $\mu$ denotes the strong convexity, $L$ denotes the smoothness, and their ratio $\kappa = L/\mu$ denotes the condition number. For simplicity we omit lower-order terms $o_{T,\kappa}(1)$; see~\cref{thm:SCSCLB,thm:SCSC-ub} for full details.  
}
\label{tab:SCSC}
\end{table}

\section{Preliminaries}\label{sec:prelims}

\subsection{Problem setup}\label{ssec:setup}
\paragraph{Quadratic min-max problems.} We focus on unconstrained min-max problems of the form 
\begin{equation}\label{eq:mm_quad}
\min_{x\in \R^{d_x}} \max_{y\in \R^{d_y}}\ f(x,y)\,  \text{ where } f(x,y) = \frac{1}{2}
\begin{bmatrix} x-x^*\\ y-y^*\end{bmatrix}^{\top}
\underbrace{\begin{bmatrix} \bm A & \bm B\\ \bm B^{\top} & -\bm C\end{bmatrix}}_{\bm H}
\begin{bmatrix} x-x^*\\ y-y^*\end{bmatrix}.
\end{equation}
For convex-concave $f$, stationary points coincide with saddle points; thus all the solutions $z = (x,y)$ of~\eqref{eq:mm_quad} satisfy $z-z^* \in \text{Null}(\bm H)$, where $z^* = (x^*,y^*)$. Indeed this is the criteria for $\nabla f(z) = \bm H (z-z^*)$ to vanish. For simplicity we write problems in the form~\eqref{eq:mm_quad} which is homogeneous around a stationary point $z^*$. This is without loss of generality since, by translating, this captures quadratic objectives $\frac{1}{2}z^\top \bm H z + l^\top z$ with arbitrary linear terms $l$, provided they admit at least one stationary point $\nabla f(z) = 0$. Throughout $\bm A$ and $\bm C$ are assumed symmetric without loss of generality, since otherwise one can replace them with their symmetrizations $(\bm A + \bm{A}^\top)/2$ and $(\bm C + \bm C^\top)/2$.

\paragraph{Reduction to a variational inequality.} The mainstream approach for solving min-max problems is to rewrite the first-order optimality conditions as a variational inequality. The variational inequality associated with~\eqref{eq:mm_quad} is:
\begin{equation} \label{eq:VI_quad}
\text{Find}\  z\in\R^{d_z}\ \text{such that}\  \left\langle F(z),\, z'- z \right\rangle \;\ge 0\,,\; \forall z'\in\R^{d_z}\,, \text{ where } F(z) = \underbrace{\begin{bmatrix} \bm A & \bm B\\ -\bm B^\top & \bm C \end{bmatrix}}_{\bm J \bm H}( z-z^*)\,.
\end{equation}
Above $\bm J = \text{diag}(\bm I, -\bm I)$ and $F = (\nabla_x f, -\nabla_y f)$. 
We write~\eqref{eq:VI_quad} since this is the standard general way to define variational inequalities, although of course in the unconstrained setting,~\eqref{eq:VI_quad} simplifies to finding $z \in \R^{d_z}$ such that $F(z) = 0$. Such a solution corresponds to a stationary point of $f$ and thus a solution of the min-max problem~\eqref{eq:mm_quad}.

\paragraph{Convexity-concavity and monotonicity.} Convergence rates for solving such problems depend on the structure of the objective $f$ and corresponding operator $F$. We focus on the classic setting of (strongly) convex-concave $f$, which corresponds to (strongly) monotone operators $F$. Below we recall these definitions and the correspondences.

\begin{defin}[(Strongly) convex-concave functions]
For $\mu \geq 0$, a function $f(x,y)$ is $\mu$-strongly-convex-strongly-concave ($\mu$-SCSC for short) if $f(\cdot,y)$ is $\mu$-strongly-convex for every $y$, and $f(x,\cdot)$ is $\mu$-strongly-concave for every $x$.\footnote{Recall that a function $g$ is said to be $\mu$-strongly-convex if $g(x) - \tfrac{\mu}{2} \|x\|^2$ is convex. For twice-differentiable $g$, this condition is equivalent to $\nabla^2 g \succeq \mu \bm I$. A function $g$ is said to be $\mu$-strongly-concave if $-g$ is $\mu$-strongly-convex.} If $f$ satisfies this for $\mu=0$, $f$ is convex-concave.
\end{defin}

For quadratic min-max problems~\eqref{eq:mm_quad}, the condition that $f$ be $\mu$-SCSC is equivalent to the condition $\bm A, \bm C \succeq \mu\bm I$. For variational inequalities, the analogous property is (strong) monotonicity.

\begin{defin}[(Strongly) monotone operators]
For $\mu \geq 0$, an operator $F:\R^d\rightarrow \R^d$ is $\mu$-strongly-monotone if $\langle F(z)-F(z'), z-z'  \rangle \geq  \mu \|z-z'\|^2$ for all $z,z'$. If $F$ satisfies this for $\mu=0$, $F$ is monotone.
\end{defin}
In particular, (strong) convexity-concavity of $f$ implies (strong) monotonicity of the saddle operator $F(z) =(\nabla_x f(z), -\nabla_y f(z))$ in the associated variational inequality.
\begin{lemma}[{Convexity-concavity implies monotonicity \citep[Theorem 1]{Rockafellar_1970}}]
Let $\mu \geq 0$. If $f(x,y)$ is $\mu$-strongly-convex-strongly-concave, then $F=(\nabla_x f, -\nabla_y f)$ is $\mu$-strongly-monotone. 
\end{lemma}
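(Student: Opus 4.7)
The plan is to verify the two-point inequality $\langle F(z) - F(z'),\, z - z' \rangle \geq \mu \|z - z'\|^2$ for all $z, z' \in \R^{d_x + d_y}$. With $z = (x,y)$, $z' = (x',y')$, and $F = (\nabla_x f, -\nabla_y f)$, expansion yields
\[
\langle F(z) - F(z'),\, z - z' \rangle = \langle \nabla_x f(x,y) - \nabla_x f(x',y'),\, x - x' \rangle - \langle \nabla_y f(x,y) - \nabla_y f(x',y'),\, y - y' \rangle.
\]
A first instinct is to insert an intermediate point $(x', y)$ and split each inner product into a single-coordinate piece — directly bounded below by $\mu \|x-x'\|^2$ or $\mu\|y-y'\|^2$ via strong convexity of $f(\cdot, y)$ or strong concavity of $f(x', \cdot)$ — plus a ``cross'' piece coming from a change in the other coordinate. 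The two cross pieces live on different line segments, however, and do not obviously combine, which is the main obstacle of this direct route.

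Instead, I would use the Hessian integral representation, which is clean in the smooth setting (and in particular for the quadratic $f$ of~\eqref{eq:mm_quad} that is the focus of the paper). Writing $z_t = z' + t(z - z')$ and $u = z - z' = (u_x, u_y)$, the fundamental theorem of calculus gives
\[
\langle F(z) - F(z'),\, z - z' \rangle = \int_0^1 u^\top \nabla F(z_t) \, u \, dt, \qquad \nabla F = \begin{pmatrix} \nabla_{xx} f & \nabla_{xy} f \\ -\nabla_{yx} f & -\nabla_{yy} f \end{pmatrix}.
\]
Expanding the integrand produces $u_x^\top \nabla_{xx} f \, u_x + u_x^\top \nabla_{xy} f \, u_y - u_y^\top \nabla_{yx} f \, u_x - u_y^\top \nabla_{yy} f \, u_y$. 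The key observation is that the two cross scalars are equal by Schwarz's theorem: since $\nabla_{yx} f = (\nabla_{xy} f)^\top$, we have $u_y^\top \nabla_{yx} f \, u_x = (u_x^\top \nabla_{xy} f \, u_y)^\top = u_x^\top \nabla_{xy} f \, u_y$, so they cancel. What remains is $u_x^\top \nabla_{xx} f \, u_x - u_y^\top \nabla_{yy} f \, u_y$. Strong convexity of $f(\cdot, y)$ for every $y$ yields $\nabla_{xx} f \succeq \mu I$ pointwise, and strong concavity of $f(x, \cdot)$ for every $x$ yields $-\nabla_{yy} f \succeq \mu I$, so the integrand is $\geq \mu \|u\|^2$ pointwise and integration gives the claim.

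For the non-smooth case covered by the lemma statement, I would reduce to the smooth case by standard mollification: convolve $f$ with a smooth compactly supported bump to obtain $f_\eps \in C^\infty$ that is again $\mu$-SCSC, apply the Hessian argument to the associated operator $F_\eps$, and pass to the limit $\eps \to 0$ using locally uniform convergence of the gradients. Since the rest of the paper only uses the result in the quadratic setting — where $\nabla_{xx}f = \bm A$, $\nabla_{yy}f = -\bm C$, $\nabla_{xy}f = \bm B$ are constant and the integral argument collapses to a one-line computation — this limiting step could be omitted in the context of our work.
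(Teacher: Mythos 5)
Your proposal is correct, but it takes a genuinely different route from the paper. The paper's proof is a two-line reduction: it cites the textbook result of Rockafellar for the case $\mu=0$ (convex-concave implies monotone), and handles $\mu>0$ by noting that $g(x,y) = f(x,y)-\tfrac{\mu}{2}\|x\|^2+\tfrac{\mu}{2}\|y\|^2$ is convex-concave, so the associated operator $G = F - \mu z$ is monotone, and rearranging $\langle G(z)-G(z'),z-z'\rangle \geq 0$ gives $\mu$-strong monotonicity of $F$. That argument works at the stated level of generality (only first-order differentiability of $f$) and needs no regularity bookkeeping. Your argument instead proves the statement from scratch via the Jacobian integral representation, with the cancellation of the off-diagonal terms by symmetry of mixed partials and the block bounds $\nabla_{xx}f \succeq \mu I$, $-\nabla_{yy}f \succeq \mu I$; this is correct and, as you note, collapses to a one-line computation in the quadratic setting the paper actually uses, but it requires twice differentiability and hence the mollification appendix (which does work: convolution is an average of translates, so it preserves $\mu$-SCSC, and $\nabla f_\eps = \rho_\eps * \nabla f \to \nabla f$ locally uniformly). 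One remark on the obstacle you flagged in the direct two-point route: it disappears if, instead of inserting an intermediate point, you sum the four first-order (strong) convexity/concavity inequalities at $(x,y)$ and $(x',y')$ — the function values telescope to zero and the quadratic terms produce exactly $\mu\|z-z'\|^2$ — which is essentially the cited textbook proof and avoids both the Hessian and the mollification step.
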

\begin{proof}
The cited textbook proves this for $\mu=0$. The proof extends to $\mu > 0$ in a straightforward way: $g(x,y) = f(x,y)-\frac{\mu}{2}\|x\|^2+\frac{\mu}{2}\|y\|^2$ is convex-concave, hence $G = (\nabla_x g, -\nabla_y g)$ is monotone (this is the case $\mu = 0$), hence $\langle G(z) - G(z'), z - z' \rangle \geq 0$ for any $z,z'$. Plugging in $G = F - \mu z$ and re-arranging establishes $\langle F(z) - F(z'), z - z' \rangle \geq \mu \|z - z'\|^2$, hence $F$ is $\mu$-strongly monotone. 
\end{proof}

Aside from monotonicity, throughout we make the standard assumption that $f$ is $L$-smooth, i.e., $\nabla f$ is $L$-Lipschitz. This is equivalent to the saddle-operator $F = (\nabla_x f, -\nabla_y f)$ being $L$-Lipschitz. In the quadratic settings~\eqref{eq:mm_quad} and~\eqref{eq:VI_quad}, this equivalently simplifies to the assumption $\|\bm H\|\leq L$.

\subsection{Spectral range}\label{ssec:spectra}

The spectra of $\bm H$ and $\bm J\bm H$ play a central role in the convergence rate of first-order algorithms.
We start by defining shorthand $\cH_{\mu}$ and $\cJ_{\mu}$ for the sets of possible matrices $\bm H$ and $\bm J\bm H$, respectively, associated with quadratic min-max problems~\eqref{eq:mm_quad} that are $L$-smooth and $\mu$-strongly-convex-concave:
\begin{align}\label{eq:def-matrix-classes}
\mathcal{H}_{\mu} &= \Big\{\bm H : \bm H = \begin{bmatrix} \bm A & \bm B\\ \bm B^\top & -\bm C \end{bmatrix},\, \|\bm H\|\leq L,\, \bm A \succeq \mu \bm I,\, \bm C \succeq \mu \bm I \Big\} \quad \text{ and } \quad 
\mathcal{J}_{\mu} = \Big\{\bm J \bm H : \bm H \in \mathcal{H}_{\mu} \Big\}.
\end{align} 
To avoid notational overhead, we suppress the dependence on $L$ when writing $\cH_{\mu}$ and $\cJ_{\mu}$. We emphasize the dependence on $\mu$ since separating the cases $\mu > 0$ and $\mu = 0$ lets us develop in parallel the strongly and non-strongly convex-concave settings.

\begin{defin}[Spectral range]
The spectral range of a set of matrices $\mathcal{M}$ is
\begin{equation*}
\sigma(\mathcal{M}) = \bigcup_{\bm M \in \mathcal{M}} \sigma(\bm M)\,.
\end{equation*}
where $\sigma(\bm{M})$ denotes the spectrum of a matrix $\bm{M}$.
\end{defin}

The spectral ranges of $\cH_{\mu}$ and $\cJ_{\mu}$ are explicit in terms of $\mu$ and $L$. For $\cJ_{\mu}$ a proof can be found, e.g., in~\citep{Azizian_Scieur_Mitliagkas_Lacoste-Julien_Gidel_2020}. For $\cH_{\mu}$ we are not aware of a reference and therefore provide a short proof here. 

\begin{lemma}[Spectral range of $\mathcal{J}_{\mu}$]\label{lem:spectrum-JH}
For any $0\leq \mu \leq L <\infty$, the spectral range of $\mathcal{J}_{\mu}$ is
$$\sigma({\mathcal{J}_\mu}) = \{z: |z|\leq L, \,\mathrm{Re}(z)\geq \mu\}.$$
\end{lemma}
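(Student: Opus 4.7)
The plan is to prove the two inclusions $\sigma(\mathcal{J}_\mu) \subseteq \{z : |z|\le L,\ \mathrm{Re}(z)\ge\mu\}$ and $\sigma(\mathcal{J}_\mu) \supseteq \{z : |z|\le L,\ \mathrm{Re}(z)\ge\mu\}$ separately. The forward inclusion will follow from elementary spectral bounds applied to a single $\bm{JH}$, and the reverse inclusion will follow from an explicit $2\times 2$ construction which shows every point in the half-disc is realized as an eigenvalue.

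For the forward inclusion, I fix $\bm H \in \mathcal{H}_\mu$ and an eigenvalue $\lambda \in \sigma(\bm{JH})$ with unit eigenvector $v \in \C^{d_z}$. The norm bound $|\lambda|\le L$ is immediate from $\|\bm{JH}\| = \|\bm H\|\le L$, since $\bm J$ is orthogonal. For the real part, the key computation is that the symmetric part of $\bm{JH}$ is block diagonal:
\begin{equation*}
\tfrac{1}{2}\bigl(\bm{JH}+(\bm{JH})^{\top}\bigr) \;=\; \begin{bmatrix} \bm A & \bm 0 \\ \bm 0 & \bm C \end{bmatrix} \;\succeq\; \mu \bm I,
\end{equation*}
where the last inequality uses $\bm A, \bm C \succeq \mu \bm I$ (the off-diagonal blocks $\bm B$ and $-\bm B^\top$ cancel upon symmetrization). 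Then writing $\lambda = v^*(\bm{JH})v$ and using that the Hermitian quadratic form $v^*\bigl[\tfrac{1}{2}(\bm{JH}+(\bm{JH})^\top)\bigr]v$ equals $\mathrm{Re}(\lambda)$, I conclude $\mathrm{Re}(\lambda)\ge \mu$.

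For the reverse inclusion, given any $z = \alpha+i\beta$ with $\alpha \ge \mu$ and $\alpha^2+\beta^2\le L^2$, I will exhibit a $1$-dimensional instance (i.e.\ $d_x=d_y=1$) that achieves $z$ as an eigenvalue. Take scalars $\bm A=\bm C=\alpha$ and $\bm B=\beta$, so that
\begin{equation*}
\bm{JH} \;=\; \begin{bmatrix}\alpha & \beta \\ -\beta & \alpha\end{bmatrix},
\qquad
\bm H \;=\; \begin{bmatrix}\alpha & \beta \\ \beta & -\alpha\end{bmatrix}.
\end{equation*}
The eigenvalues of $\bm{JH}$ are then exactly $\alpha\pm i\beta$, which includes $z$. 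The constraints are easily verified: $\bm A,\bm C\ge \mu$ since $\alpha\ge\mu$, and $\|\bm H\|=\sqrt{\alpha^2+\beta^2}\le L$ because $\bm H$ is symmetric with eigenvalues $\pm\sqrt{\alpha^2+\beta^2}$. Hence $\bm H\in\mathcal{H}_\mu$ and $z\in\sigma(\mathcal{J}_\mu)$.

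I do not expect a substantive obstacle: the forward inclusion hinges on the clean cancellation of $\bm B$ in the symmetric part of $\bm{JH}$ (which is what makes the half-plane bound $\mathrm{Re}(\lambda)\ge\mu$ tight), and the reverse inclusion is witnessed by a two-dimensional construction. The only minor care needed is to handle complex eigenvectors $v\in\C^{d_z}$ correctly when identifying $\mathrm{Re}(v^*(\bm{JH})v)$ with $v^*\bigl[\tfrac{1}{2}(\bm{JH}+(\bm{JH})^\top)\bigr]v$; since $\bm{JH}$ is real, this follows from $(\bm{JH})^*=(\bm{JH})^\top$.
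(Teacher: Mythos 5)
Your proof is correct. Note, however, that the paper does not actually prove this lemma itself: it states that the spectral range of $\cJ_\mu$ is known and cites \citet{Azizian_Scieur_Mitliagkas_Lacoste-Julien_Gidel_2020}, reserving its short in-text proof for the companion statement about $\cH_\mu$ (\cref{lem:spectrum-H}). So what you have produced is a self-contained argument where the paper defers to the literature, and it is structurally parallel to the paper's proof of \cref{lem:spectrum-H}: one inclusion via elementary spectral bounds, the other via explicit low-dimensional witnesses. Your forward inclusion is the standard numerical-range argument --- the symmetric part of $\bm J \bm H$ is $\mathrm{diag}(\bm A, \bm C) \succeq \mu \bm I$, so $\Re(\lambda) = v^*\bigl[\tfrac{1}{2}(\bm J\bm H + (\bm J\bm H)^\top)\bigr]v \geq \mu$ for a unit eigenvector $v$, and $|\lambda| \leq \|\bm J\bm H\| = \|\bm H\| \leq L$ --- and your handling of complex eigenvectors is correctly justified since $\bm J\bm H$ is real and a real symmetric matrix $\succeq \mu\bm I$ satisfies the same bound as a Hermitian form on $\C^{d_z}$. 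The reverse inclusion via the $2\times 2$ rotation-like block $\begin{bmatrix}\alpha & \beta\\ -\beta & \alpha\end{bmatrix}$ with $\bm A = \bm C = \alpha \geq \mu$, $\bm B = \beta$, and $\|\bm H\| = \sqrt{\alpha^2+\beta^2} \leq L$ is exactly the right witness (and is the same building block the paper later uses in the proof of \cref{lem:adaptive2} to realize prescribed complex eigenvalues). The only cosmetic slip is calling the instance both ``$1$-dimensional'' and ``two-dimensional''; it is $d_x = d_y = 1$, hence a $2\times 2$ matrix, and nothing hinges on the wording.
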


\begin{lemma}[Spectral range of $\mathcal{H}_{\mu}$]\label{lem:spectrum-H}
For any $0\leq \mu \leq L <\infty$, the spectral range of $\mathcal{H}_{\mu}$ is 
$$\sigma(\mathcal{H}_\mu) = [-L, -\mu] \cup [\mu, L].$$
\end{lemma}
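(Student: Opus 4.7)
The plan is to prove the two inclusions $\sigma(\mathcal{H}_\mu) \subseteq [-L,-\mu]\cup[\mu,L]$ and $[-L,-\mu]\cup[\mu,L] \subseteq \sigma(\mathcal{H}_\mu)$ separately. Since $\bm A$ and $\bm C$ are symmetric, $\bm H$ is symmetric, so $\sigma(\bm H) \subset \R$ and $|\lambda| \leq \|\bm H\| \leq L$ for every eigenvalue $\lambda$. The nontrivial containment step is therefore to rule out eigenvalues in $(-\mu, \mu)$.

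I would establish this via a direct eigenvector calculation. For a nonzero eigenvector $v = (v_x, v_y)$ with $\bm H v = \lambda v$, the block equations give
\[
\bm A v_x + \bm B v_y = \lambda v_x, \qquad \bm B^\top v_x - \bm C v_y = \lambda v_y.
\]
Pairing these with $v_x$ and $v_y$ respectively and subtracting cancels the cross-terms in $\bm B$ and yields
\[
v_x^\top \bm A v_x + v_y^\top \bm C v_y \;=\; \lambda\bigl(\|v_x\|^2 - \|v_y\|^2\bigr).
\]
Because $\bm A, \bm C \succeq \mu \bm I$, the left-hand side is at least $\mu\|v\|^2$. When $\mu > 0$ this forces $\|v_x\|^2 \neq \|v_y\|^2$ (otherwise $v = 0$), and then dividing gives $|\lambda| \geq \mu\|v\|^2 \big/ \bigl|\|v_x\|^2 - \|v_y\|^2\bigr| \geq \mu$, with the sign of $\lambda$ matching that of $\|v_x\|^2 - \|v_y\|^2$; the $\mu = 0$ case is trivial. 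Combined with $|\lambda| \leq L$, this yields the desired containment.

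For the reverse inclusion I would exhibit scalar $1\times 1$-block examples. Given $\lambda \in [\mu, L]$, take $\bm A = \lambda$, $\bm C = \mu$, $\bm B = 0$, so that $\bm H = \mathrm{diag}(\lambda, -\mu) \in \mathcal{H}_\mu$ has $\lambda$ in its spectrum; given $\lambda \in [-L, -\mu]$, take $\bm A = \mu$, $\bm C = -\lambda$, $\bm B = 0$ analogously. The only subtle point in the whole argument is ruling out the boundary case $\|v_x\| = \|v_y\|$ in the containment step, which follows immediately from the positive semidefiniteness of $\bm A$ and $\bm C$; beyond that the proof is routine linear algebra.
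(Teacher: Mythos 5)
Your proof is correct, and the easy ingredients (real spectrum from symmetry, $|\lambda|\leq \|\bm H\|\leq L$, and scalar diagonal witnesses for the reverse inclusion) match the paper. However, the crux---excluding eigenvalues in $(-\mu,\mu)$ when $\mu>0$---is handled by a genuinely different argument: the paper shows the \emph{resolvent} exists, i.e.\ that $\bm H - r\bm I$ is invertible for every $r\in(-\mu,\mu)$, by noting that the diagonal blocks $\bm A - r\bm I \succ 0$ and $-(\bm C + r\bm I)\prec 0$ are invertible and that the Schur complement $(\bm A - r\bm I) + \bm B(\bm C + r\bm I)^{-1}\bm B^\top \succ 0$, whereas you work directly with an eigenvector $v=(v_x,v_y)$ and exploit the cancellation $v_x^\top \bm B v_y = v_y^\top \bm B^\top v_x$ to obtain $v_x^\top \bm A v_x + v_y^\top \bm C v_y = \lambda(\|v_x\|^2-\|v_y\|^2)$, from which $|\lambda|\geq \mu$ follows since the left side is at least $\mu\|v\|^2$ and $\bigl|\|v_x\|^2-\|v_y\|^2\bigr|\leq \|v\|^2$; your handling of the degenerate case $\|v_x\|=\|v_y\|$ is also right. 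Both routes are elementary and rest on the same structural fact (the antisymmetric coupling through $\bm B$ cannot push eigenvalues into the gap): the Schur-complement proof avoids eigenvectors entirely and is phrased as a clean invertibility statement, while your Rayleigh-quotient-style computation is arguably more hands-on, needs no block inversion, and yields slightly more information as a byproduct (the sign of $\lambda$ agrees with that of $\|v_x\|^2-\|v_y\|^2$). Either argument suffices for the lemma.
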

\begin{proof}
The direction ``$\supseteq$'' is clear by considering $\bm A = \bm C = |\lambda|$ and $\bm B = 0$ for any $\lambda \in [\mu,L]$. We prove the other direction ``$\subseteq$'' by combining three observations. First, $\sigma({\mathcal{H}_\mu})$ is real since the matrices $\bm H \in \cH_{\mu}$ are symmetric and thus have real eigenvalues. Second, $\sigma(\cH_{\mu}) \subseteq [-L,L]$ since the spectral radius (i.e., maximum magnitude eigenvalue) of a matrix is upper bounded by the spectral norm, and $\| \bm H \| \leq L$ for all $\bm H \in \cH_{\mu}$. Third, $\sigma(\cH_{\mu}) \subseteq (-\infty,-\mu] \cup [\mu, \infty)$. To show this, it suffices to argue that $\bm H - r \bm I$ is invertible for any $\bm H \in \cH_{\mu}$ and $r \in (-\mu,\mu)$. Observe that the diagonal blocks $\bm A - r \bm I \succeq (\mu - r) \bm I \succ 0$ and $-(\bm C + r\bm I) \preceq -(\mu + r) \bm I \prec 0$ are both invertible. Thus the Schur complement $(\bm A - r \bm I) + \bm B (\bm C + r \bm I)^{-1} \bm B^{\top} \succ 0 $ is also invertible, and hence so is the full matrix $\bm H -r \bm I$, as desired.
\end{proof}

See~\cref{tab:SCSC,tab:CC} for an illustration of these spectral ranges in the cases $\mu > 0$ and $\mu = 0$, respectively. Geometrically, when $\mu > 0$, the spectral range $\sigma({\mathcal{H}_\mu})$ is the union of two real intervals that are symmetric around $0$, whereas $\sigma({\mathcal{J}_\mu})$ is the intersection of the complex disc of radius $L$ with the half-plane $\{z \in \C : \Re(z) \geq \mu\}$. Both sets simplify when $\mu = 0$: then $\sigma(\cH_{\mu})$ becomes a single interval $[-L,L]$ and $\sigma(\cJ_{\mu})$ becomes the half disc of radius $L$ with positive real part.

\subsection{First-order algorithms}\label{ssec:algs}

\paragraph*{First-order algorithms.} We focus on the standard algorithmic model of first-order oracle access to $f$, i.e., black-box queries of the form $f(x,y)$ and $\nabla f(x,y)$. We remark that our proposed algorithms use gradients $\nabla f(x,y)$ but not function evaluations $f(x,y)$; nevertheless we include function evaluations in the definition of the oracle since this is the standard setup. In~\cref{sec:LB} we show that the inclusion of this information does not affect the optimal convergence rates.

\paragraph*{Krylov-subspace algorithms.} We focus on the standard setting of Krylov-subspace algorithms, i.e., algorithms that produce their next iterate within the span of the observed gradients (formal definition below). This is a reasonable assumption since deviating from the span of observed gradients amounts to making an uninformed guess. 
It is classically known from other areas of optimization (see e.g.,~\citep{nemirovsky1991optimality,nemirovsky1992information,nesterov-survey}) that the Krylov-subspace assumption simplifies arguments, isolates the key conceptual ideas, and can be relaxed at the expense of more technical arguments.

\paragraph*{Adaptive algorithms.} All of our results hold regardless of adaptivity, i.e., whether the linear-span coefficients for producing iterates depend on observed information\footnote{
	Formally, an algorithm is said to be non-adaptive if the coefficients $c_{t,k}$ in~\eqref{eq:sum_def_sym} (or analogously $a_{t,k},b_{t,k}$ in~\eqref{eq:sum_def_asym} for asymmetric algorithms) depend on $t$, $k$, $\mu$, and $L$, but \emph{not} on any information from prior iterates. Note that throughout we assume for simplicity that $\mu$ and $L$ are known.  
}. In fact our algorithmic upper bounds are achieved without adaptivity. For clarity of exposition, we first prove our lower bounds for non-adaptive algorithms in~\cref{sec:LB} since these arguments are simpler; then in~\cref{sec:adaptive} we explain the more technical argument for establishing the same results for adaptive algorithms.

\paragraph*{Symmetric vs asymmetric algorithms.} This paper highlights the importance of a complementary axis for distinguishing min-max algorithms: whether updates are symmetric in the minimization variable $x$ and maximization variable $y$. In words, symmetric algorithms treat $x,y$ identically in all updates. Such algorithms are the standard for variational inequality problems since, even from the outset of the problem formulation, variational inequalities do not distinguish between the blocks of $z = (x,y)$; c.f.,~\eqref{eq:mm_quad} versus~\eqref{eq:VI_quad}. 
In contrast, asymmetric algorithms can update $x$ and $y$ differently, which enables exploiting the inherent asymmetry in the definition of min-max problems. Symmetric algorithms can be defined for both min-max problems~\eqref{eq:mm_quad} and variational inequality problems~\eqref{eq:VI_quad}, whereas asymmetric algorithms are only possible to implement for the former.

\begin{defin}[Symmetric algorithms]\label{def:symmetricalgs}
A symmetric Krylov-subspace algorithm is an iterative algorithm that produces a sequence $\{z_t\}$ satisfying, for all $t\ge 1$,
$$
z_t \in z_0 + \mathrm{span}\left\{F(z_0),F(z_1),\,\dots,F(z_{t-1})\right\}.
$$
In other words, there exist coefficients $c_{t,k}$ such that 
\begin{equation}\label{eq:sum_def_sym}
	z_t = z_0 + \sum_{k=0}^{t-1} c_{t,k} F(z_k)\,.
\end{equation}
\end{defin}

\begin{defin}[Asymmetric algorithms]\label{def:asymmetric} 
An asymmetric Krylov-subspace algorithm is an iterative algorithm that produces a sequence $\{z_t = (x_t,y_t)\}$ satisfying, for all $t\ge 1$,
\begin{equation*}
\begin{aligned}
x_t &\in x_0 + \mathrm{span}\left\{ \nabla_x f(x_0,y_0), \dots,\nabla_x f(x_{t-1},y_{t-1}) \right\},\\
y_t &\in y_0 + \mathrm{span}\left\{ \nabla_y f(x_0,y_0), \dots,\nabla_y f(x_{t-1},y_{t-1}) \right\}.
\end{aligned}
\end{equation*}
In other words, there exist coefficients $a_{t,k}$ and $b_{t,k}$ such that 
\begin{equation}\label{eq:sum_def_asym}
	x_t = x_0 + \sum_{k=0}^{t-1} a_{t,k} \nabla_x f(x_k,y_k) \quad \text{ and } \quad y_t = y_0 + \sum_{k=0}^{t-1} b_{t,k} \nabla_y f(x_k,y_k).
\end{equation}
\end{defin}

The main result of this paper is that asymmetric algorithms lead to faster convergence rates than are possible using symmetric algorithms. Our starting point is the observation that the iterates of symmetric and asymmetric algorithms can be associated with polynomials in the matrices $\bm J \bm H$ and $\bm H$, respectively. As we detail formally below, this correspondence is 1-to-1 for the former; whereas for the latter, the class of asymmetric algorithms include these matrix polynomials as an important special case (this inclusion is sufficient for us to develop the claimed faster algorithms). Below and throughout, denote the linear space of bounded-degree polynomials with constant coefficient $1$ by 
\begin{align*}
\mathcal{P}_t = \{p: \mathrm{deg}(p)\leq t,\, p(0)=1\}\,.
\end{align*}

\begin{lemma}[Symmetric algorithms are matrix polynomials of $\bm J \bm H$]\label{lem:sym-algs}
	Consider the min-max problem~\eqref{eq:mm_quad} or the variational inequality problem~\eqref{eq:VI_quad}. 
    For any symmetric Krylov-subspace algorithm, there exists a sequence of polynomials $p_t\in \mathcal{P}_t$ such that for each $t$,
    $$z_t-z^* = p_t(\bm J \bm H) (z_0-z^*).$$
\end{lemma}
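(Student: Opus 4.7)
The plan is to prove this by a straightforward induction on $t$, exploiting the fact that the operator $F$ in both~\eqref{eq:mm_quad} and~\eqref{eq:VI_quad} is linear, namely $F(z) = \bm J \bm H (z - z^*)$. This identity is the only problem-specific fact we need; everything else is bookkeeping on the Krylov recurrence~\eqref{eq:sum_def_sym}.

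The base case $t=0$ is trivial with $p_0 \equiv 1 \in \cP_0$, since $z_0 - z^* = \bm I (z_0 - z^*)$. For the inductive step, assume that $z_k - z^* = p_k(\bm J \bm H)(z_0 - z^*)$ for all $k \leq t-1$ with $p_k \in \cP_k$. Plugging $F(z_k) = \bm J \bm H (z_k - z^*) = \bm J \bm H \, p_k(\bm J \bm H)(z_0 - z^*)$ into the symmetric update~\eqref{eq:sum_def_sym} and subtracting $z^*$ from both sides, I get
\begin{equation*}
z_t - z^* = (z_0 - z^*) + \sum_{k=0}^{t-1} c_{t,k} \, \bm J \bm H \, p_k(\bm J \bm H) (z_0 - z^*) = p_t(\bm J \bm H)(z_0 - z^*)\,,
\end{equation*}
where I define the scalar polynomial
\begin{equation*}
p_t(\lambda) \;\defeq\; 1 + \lambda \sum_{k=0}^{t-1} c_{t,k}\, p_k(\lambda)\,.
\end{equation*}

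It remains to verify $p_t \in \cP_t$. The constant coefficient is $p_t(0) = 1$ since the sum is multiplied by $\lambda$. For the degree bound, each $p_k$ has degree at most $k \leq t-1$ by the inductive hypothesis, and multiplication by $\lambda$ bumps the degree by one, giving $\deg(p_t) \leq t$. This closes the induction.

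There is no real obstacle here; the only subtlety worth flagging is that the lemma statement applies verbatim to both~\eqref{eq:mm_quad} and~\eqref{eq:VI_quad}, which is consistent because the saddle operator $F$ in the min-max formulation and the operator $F$ in the associated VI are the same object by construction. Hence a single induction using the identity $F(z) = \bm J \bm H (z - z^*)$ handles both cases simultaneously.
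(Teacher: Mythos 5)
Your proof is correct and follows essentially the same route as the paper's: an induction on $t$ using the linearity $F(z) = \bm J \bm H(z - z^*)$ in the recurrence~\eqref{eq:sum_def_sym}, producing the same recursive polynomial $p_t(\lambda) = 1 + \lambda \sum_{k=0}^{t-1} c_{t,k}\, p_k(\lambda)$. The only difference is that you spell out the degree and normalization check for $p_t \in \cP_t$, which the paper leaves implicit.
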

\begin{proof}
	We prove by induction on $t$. The base case $t = 0$ is trivial. Supposing true for all $k < t$,  then $		z_t - z^* 
	=
	(z_0 - z^*) + \sum_{k=0}^{t-1} c_{t,k} ( F(z_k) - F(z^*))
	= p_t(\bm J \bm H) (z_0 - z^*)$ where $p_t(\lambda) =1 + \sum_{k=0}^{t-1} c_{t,k} \lambda p_k(\lambda)$.
	Here the first step uses~\cref{def:symmetricalgs} and $F(z^*) = 0$; the second step uses $F(z)= \bm J\bm H (z-z^*)$ and the induction hypothesis $z_k - z^* = p_k(\bm J \bm H)(z_0 - z^*)$.
	Since $p_t \in \cP_t$, the claim is proved.
\end{proof}

\begin{lemma}[Asymmetric algorithms include matrix polynomials of $ \bm H$]\label{lem:asym-algs}
	Consider the quadratic min-max problem~\eqref{eq:mm_quad}.  For any polynomial $p_t\in \mathcal{P}_t$, there exists an asymmetric Krylov-subspace algorithm whose $t$-th iterate satisfies
    \begin{equation*}
        z_t-z^* = p_t(\bm H)(z_0-z^*).
    \end{equation*}
\end{lemma}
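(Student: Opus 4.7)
The plan is to exploit the extra degree of freedom afforded by asymmetric algorithms---namely, the freedom to choose different coefficients $a_{t,k}$ and $b_{t,k}$ for the $x$- and $y$-updates---to effectively replace the saddle operator $F(z) = \bm J \bm H(z-z^*)$ by the ``true'' gradient $\nabla f(z) = \bm H(z-z^*)$, which differs from $F$ only by a sign flip on the $y$-block. Concretely, I would set $a_{t,k} = b_{t,k} =: c_{t,k}$ in~\eqref{eq:sum_def_asym}, so that concatenating the two lines and using $\nabla_x f(z) = \bm A(x-x^*) + \bm B(y-y^*)$ and $\nabla_y f(z) = \bm B^\top(x-x^*) - \bm C(y-y^*)$ yields
\[
z_t - z^* = (z_0 - z^*) + \sum_{k=0}^{t-1} c_{t,k}\, \bm H(z_k - z^*).
\]
This is identical in form to the update appearing in the proof of~\cref{lem:sym-algs}, except that $\bm H$ replaces $\bm J\bm H$. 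An induction on $t$ that mirrors that proof then yields $z_k - z^* = q_k(\bm H)(z_0 - z^*)$ for polynomials $q_k \in \mathcal{P}_k$ obeying the recursion $q_t(\lambda) = 1 + \sum_{k=0}^{t-1} c_{t,k}\,\lambda\, q_k(\lambda)$.

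It remains to argue that, given any target $p_t \in \mathcal{P}_t$, the coefficients $\{c_{t,k}\}$ (and, if needed, the choice of intermediate iterates) can be picked so that $q_t = p_t$. My plan is to select the intermediate iterates so that the resulting polynomials $q_0, q_1, \ldots, q_{t-1}$ have \emph{exact} degrees $0, 1, \ldots, t-1$. A concrete, canonical choice is the unit-stepsize gradient iteration $z_{k+1} = z_k - \nabla f(z_k)$, which gives $q_k(\lambda) = (1-\lambda)^k$; any other choice with exact degrees works equally well. With this property in hand, the polynomials $\lambda q_0(\lambda), \lambda q_1(\lambda), \ldots, \lambda q_{t-1}(\lambda)$ have distinct degrees $1, 2, \ldots, t$ and hence form a basis for the $t$-dimensional space of polynomials of degree at most $t$ with zero constant term.

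Since $p_t \in \mathcal{P}_t$ satisfies $p_t(0) = 1$, the polynomial $p_t(\lambda) - 1$ lies exactly in this subspace, so there is a unique choice of $(c_{t,0}, \ldots, c_{t,t-1})$ solving $p_t(\lambda) - 1 = \sum_{k=0}^{t-1} c_{t,k}\,\lambda\, q_k(\lambda)$. Plugging these coefficients into the update for $z_t$ yields $q_t = p_t$, as desired. The only nontrivial bookkeeping---and the main conceptual step---is the reduction from asymmetric algorithms to matrix polynomials of $\bm H$ in the first paragraph; everything after that is a routine basis argument. Note that the final lemma statement is strictly weaker than the symmetric analogue (``includes'' rather than ``equals''), which matches the fact that asymmetric algorithms can additionally access polynomials outside the $\bm H$-polynomial family, but the inclusion is all that is needed downstream to derive the faster upper bounds.
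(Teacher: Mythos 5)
Your proof is correct, and it shares the paper's essential insight---choosing the $x$- and $y$-coefficients with the \emph{same} sign (i.e.\ $a_{t,k}=b_{t,k}$) so that the concatenated update involves $\nabla f(z)=\bm H(z-z^*)$ rather than $F(z)=\bm J\bm H(z-z^*)$, which is exactly what a symmetric algorithm cannot do. Where you diverge is in how the target polynomial is realized. The paper factorizes $p_t(\lambda)=\prod_{k<t}(1-\lambda/r_k)$ over its roots and runs plain GDA with stepsizes $1/r_k$ on both blocks (see~\eqref{eq:asymmetric-Krylov}), so that every iterate is a partial product of $p_t(\bm H)$; this makes the construction a concrete stepsize schedule for GDA, which is what ties directly into the slingshot-stepsize algorithms used for the upper bounds, at the cost of possibly complex intermediate iterates when $p_t$ has complex roots. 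You instead fix a canonical warm-up trajectory $z_{k+1}=z_k-\nabla f(z_k)$ (giving $q_k(\lambda)=(1-\lambda)^k$ of exact degree $k$), note that $\{\lambda q_k\}_{k=0}^{t-1}$ is a degree-graded basis of the $t$-dimensional space of polynomials of degree at most $t$ vanishing at $0$, and hit the target with a single linear combination at step $t$; this is a standard Krylov-subspace argument that avoids root-finding and keeps all intermediate iterates real (with real final coefficients whenever $p_t$ has real coefficients), but it produces a less ``algorithmic'' scheme in which only the last iterate matches $p_t(\bm H)$. Both constructions satisfy~\cref{def:asymmetric} and deliver exactly the inclusion stated in~\cref{lem:asym-algs}, which, as you note, is all that is needed downstream.
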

\begin{proof}
	Since $p_t \in \cP_t$, it can be written in factorized form $p_t(\lambda) = \prod_{k=0}^{t-1} (1 - \lambda/r_k)$ where $\{r_k\}$ denote its roots. Consider the asymmetric Krylov-subspace algorithm
	\begin{equation}\label{eq:asymmetric-Krylov}
		x_{k+1} = x_{k} - \frac{1}{r_k}\nabla_x f(x_k,y_k) \quad \text{ and } \quad y_{k+1} = y_{k} - \frac{1}{r_k}\nabla_y f(x_k,y_k)
	\end{equation}
	for all $k<t$. By concatenating variables $z = (x,y)$, this can be written succinctly as
	\begin{equation*}
		\begin{aligned}
			z_{k+1}-z^* = z_{k} - z^* - \frac{1}{r_k} \nabla f(z_k)
			=\left(\bm I-\frac{1}{r_k} \bm H \right)(z_{k} - z^*) \,.
		\end{aligned}
	\end{equation*}
	Iterating $t$ times yields the desired identity
	 $z_{t}-z^* = \prod_{k=0}^{t-1}(\bm I-\frac{1}{r_k} \bm H ) (z_0-z^*) = p_t(\bm H)(z_0-z^*)$.
\end{proof}

\begin{remark}[Gradient-descent-ascent with negative/complex stepsizes]
	The asymmetric algorithm we propose in~\eqref{eq:asymmetric-Krylov} is gradient-descent-ascent, but with unconventional stepsizes $\plusminus 1/r_k$ which are negative or complex. Such stepsizes were first explored in our previous work~\citep{shugart25}. By showing that asymmetric algorithms can converge strictly faster than symmetric algorithms, the present paper shows that such stepsizes can lead to faster rates than arbitrary first-order symmetric algorithms---including even algorithms that use momentum, extragradients, optimism, etc. 
    \par Note also that the algorithmic construction of the polynomials in the proof of~\cref{lem:asym-algs} is not unique. Other algorithms could be used, but the key is that any such algorithm must be asymmetric.
\end{remark}

\subsection{Complex analysis and approximation theory preliminaries}\label{ssec:complexanalysis}

Central to our analysis are sharp characterizations of the optimal convergence rates in terms of approximation-theoretic quantities which we compute using Green's functions and conformal mappings. We briefly recall relevant background here for the convenience of the reader. For further details on these topics see for example the excellent textbooks~\citep{stein2010complex,ransford1995potential,trefethen2019approximation}.

\par Below and throughout, we use the following notational shorthands: let $\hat{\C} = \C \cup \{\infty\}$ denote the one-point compactification of $\C$, let $D = \{z : |z| \leq 1\}$ denote the unit disc, and let $\Omega = \{z \in D : \mathrm{Re}(z) \geq 0\}$ denote the unit half disc with positive real part.
Note that $\sigma(\mathcal{J}_0)=\Omega$ is the spectral range for linear monotone operators that are Lipschitz with parameter $L=1$ (see~\cref{lem:spectrum-JH}). Finally, we write $\|p\|_S = \sup_{z \in S} |p(z)|$ to denote the supremum norm of a function $p$ on a set $S$. 

\paragraph*{Conformal mappings.}
Recall that a conformal map is a bijective holomorphic map between open subsets of $\hat{\C}$. The Riemann mapping theorem ensures the existence of conformal maps between any non-empty, simply connected, open sets. We make particular use of the following explicit conformal map of the exterior of the half disc to the exterior of the disc; see~\cref{fig:conformalmap} for a visualization.

\begin{lemma}[Conformal mapping of $\hat{\mathbb{C}}\setminus \Omega$ to $\hat{\mathbb{C}}\setminus D$ \citep{Pommerenke_1961}]\label{lem:conformal}
The function
$$\Phi_\Omega(\lambda) = \frac{\left(1-\left(\frac{\lambda-i}{\lambda+i}\right)^{\frac{2}{3}}\right)-\sqrt{3}i\left(1+\left(\frac{\lambda-i}{\lambda+i}\right)^{\frac{2}{3}}\right)}{2\left(\left(\frac{\lambda-i}{\lambda+i}\right)^{\frac{2}{3}}-1\right)}$$
conformally maps the exterior $\hat{\C} \setminus \Omega$ of the unit half disc to the exterior $\hat{\C} \setminus D$ of the unit disc.
\end{lemma}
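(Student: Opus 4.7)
The plan is to exhibit $\Phi_\Omega$ as a composition of three standard conformal maps---a Möbius transformation, a fractional power, and another Möbius transformation---and then match the composition to the stated formula by a direct algebraic simplification.

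First, I would apply $T_1(\lambda) = (\lambda - i)/(\lambda + i)$. Since Möbius transformations carry circles and lines to circles and lines, tracking three points shows that $T_1$ maps the unit circle to the imaginary axis (via $1 \mapsto -i$, $i \mapsto 0$, $-1 \mapsto i$) and the vertical diameter from $-i$ to $i$ to the non-positive real axis (since $T_1(it) = (t-1)/(t+1)$ is real for $t \in [-1, 1]$). Computing one interior image such as $T_1(1/2) = -3/5 - 4i/5$ pins down the orientation, so $T_1$ sends $\hat{\mathbb{C}} \setminus \Omega$ bijectively onto $\hat{\mathbb{C}} \setminus Q$, where $Q = \{z : \mathrm{Re}(z) \leq 0,\, \mathrm{Im}(z) \leq 0\}$ is the closed third quadrant.

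Next I would apply the single-valued branch of $w \mapsto w^{2/3}$ on the simply connected set $\hat{\mathbb{C}} \setminus Q$: this rescales the angular aperture at the origin from $3\pi/2$ to $\pi$, producing a conformal map onto the open half-plane $H = \{s : \arg s \in (-\pi/3, 2\pi/3)\}$. Finally I would apply a Möbius $T_3$ taking $H$ conformally onto $\hat{\mathbb{C}} \setminus D$ and fixing $\infty$ (for instance, first rotate $H$ to the right half-plane and then apply $u \mapsto (u+1)/(u-1)$). By construction, the resulting composition $T_3 \circ (\cdot)^{2/3} \circ T_1$ is a conformal bijection from $\hat{\mathbb{C}} \setminus \Omega$ onto $\hat{\mathbb{C}} \setminus D$.

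It then remains to verify that this composition equals the formula in the lemma. Setting $s = T_1(\lambda)^{2/3}$, the given expression for $\Phi_\Omega$ is a Möbius in $s$, and so is the composition $T_3 \circ (\cdot)^{2/3} \circ T_1$ viewed as a function of $s$. Since a Möbius transformation is determined by its values at any three points, the identity reduces to checking agreement at, say, $s = 1, 0, \infty$, which is routine. The main technical subtlety is the branch of $w^{2/3}$: one must track it so that the two rays bounding $\hat{\mathbb{C}} \setminus Q$ (namely $\arg w = -\pi/2$ and $\arg w = \pi$) are sent to $\arg s = -\pi/3$ and $\arg s = 2\pi/3$ respectively, and so that their further image under $T_3$ lies on the unit circle. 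Once the branch is correctly fixed, the remaining algebraic simplification is mechanical.
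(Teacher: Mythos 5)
Your decomposition strategy (M\"obius map, then a $2/3$ power, then a M\"obius map) is the natural route, and the first two steps are correct: $T_1(\lambda)=(\lambda-i)/(\lambda+i)$ carries $\hat{\mathbb{C}}\setminus\Omega$ onto the complement of the closed third quadrant, and the branch of $w^{2/3}$ with $\arg w\in(-\tfrac{\pi}{2},\pi)$ carries that onto the half-plane $H=\{s:\arg s\in(-\tfrac{\pi}{3},\tfrac{2\pi}{3})\}$. (The paper itself gives no proof of this lemma---it cites Pommerenke---so there is no internal argument to compare against.) The gap is in your final step. First, no M\"obius transformation can map $H$ onto $\hat{\mathbb{C}}\setminus D$ \emph{and} fix $\infty$: the point $\infty$ lies on $\partial H$, so it must be sent to the unit circle. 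Your example (rotate $H$ to the right half-plane, then $u\mapsto(u+1)/(u-1)$) indeed does not fix $\infty$, and---more importantly---it sends $s=e^{i\pi/6}$ to $\infty$ rather than $s=1$ to $\infty$, whereas the M\"obius transformation appearing in the stated formula has its pole at $s=1$ (this is exactly what makes $\Phi_\Omega(\infty)=\infty$, the normalization the paper needs in \cref{def:green} and \cref{cor:green_partial}). Consequently your ``check agreement at three points'' step is not a formality: the M\"obius maps of $H$ onto $\hat{\mathbb{C}}\setminus D$ form a three-real-parameter family, two different members of it do \emph{not} agree at three points, and with your particular $T_3$ the check fails already at $s=1$ (the formula gives $\infty$, your composition gives a finite value). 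Identifying the correct $T_3$ is the actual content of the verification, and your argument as written never does it.

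The fix is short: work with the M\"obius transformation that actually appears in the formula, $M(s)=\frac{(1-s)-\sqrt{3}i(1+s)}{2(s-1)}$, and show directly that it maps $H$ onto $\hat{\mathbb{C}}\setminus D$. Using $1-\sqrt{3}i=2e^{-i\pi/3}$ and $1+\sqrt{3}i=2e^{i\pi/3}$ one gets $M(s)=e^{i\pi/3}\,\frac{e^{-2i\pi/3}-s}{s-1}$, hence $|M(s)|=\frac{|s-e^{-2i\pi/3}|}{|s-1|}$. Thus $|M(s)|=1$ precisely on the perpendicular bisector of the two unit-circle points $1$ and $e^{-2i\pi/3}$, which is the line through the origin in direction $e^{-i\pi/3}$, i.e.\ exactly $\partial H$; since $M(1)=\infty$ with $1\in H$, the half-plane $H$ is carried onto $\{|z|>1\}\cup\{\infty\}=\hat{\mathbb{C}}\setminus D$. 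Composing with your (correct) first two steps then proves the lemma, and also gives the normalization $\Phi_\Omega(\infty)=\infty$ that the paper's Green's-function computations rely on.
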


\begingroup
\setlength{\intextsep}{-6pt}  
\begin{figure}[H]
    \centering
    \includegraphics[width=0.8\linewidth]{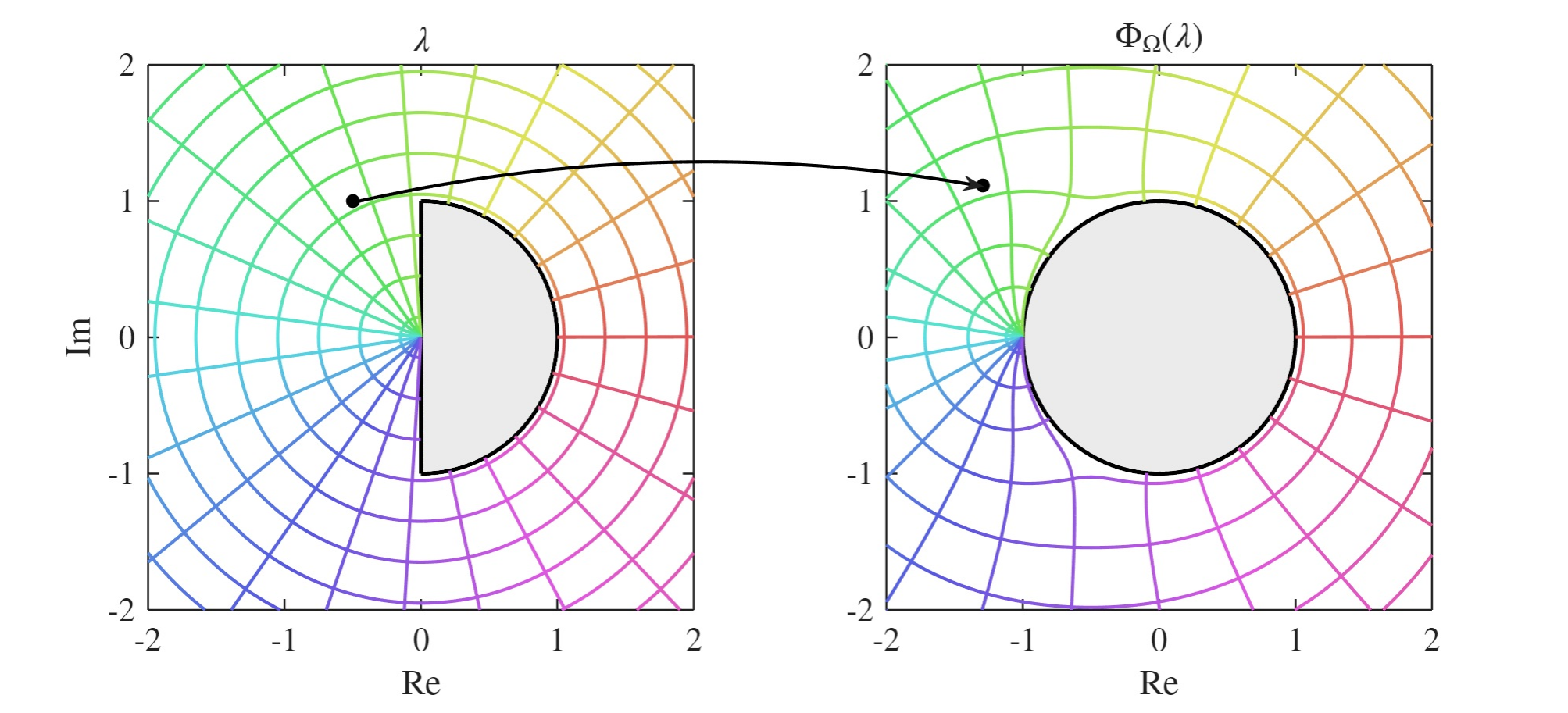}
    \caption{\footnotesize The conformal mapping $\Phi_{\Omega}$ in~\cref{lem:conformal} from the exterior $\hat{\C} \setminus \Omega$ of the half disc to the exterior $\hat{\C} \setminus D$ of the disc. In this plot, a point $\lambda \in \hat{\C} \setminus \Omega$ (\textbf{left}) is mapped to the point $\Phi(\lambda) \in \hat{C} \setminus D$ (\textbf{right}) of the same color. 
    	}
    \label{fig:conformalmap}
\end{figure}
\endgroup

\paragraph*{Green's function.} 
Green's functions arise throughout PDE, complex analysis, potential theory, and more. In this paper we make use of their connections to approximation theory. 
Below and throughout, we consider Green's function with pole at $\infty$ (hence we drop this qualifier as there is no ambiguity) and for sets $S$ that are connected (hence we can use the following formula in terms of conformal mappings).

\begin{defin}[Green's function with pole at $\infty$]\label{def:green}
	Let $S\subset \mathbb{C}$ be a non-empty compact set such that $\hat{ \mathbb{C}}\setminus S$ is simply connected. The Green's function of $S$ is $g_{S}= \log |\Phi_S|$, where $\Phi_S$ is the conformal mapping from $\hat{\mathbb{C}}\setminus S$ to $\hat{\mathbb{C}}\setminus D$ with normalization $\Phi_S(\infty) = \infty$ and $\Phi_S'(\infty) > 0$.
\end{defin}

\cref{fig:greensfunc} illustrates Green's function $g_{\Omega}$ for the unit half disc $\Omega$.  
\begin{figure}[H]
	\centering
	\includegraphics[width=0.4\linewidth]{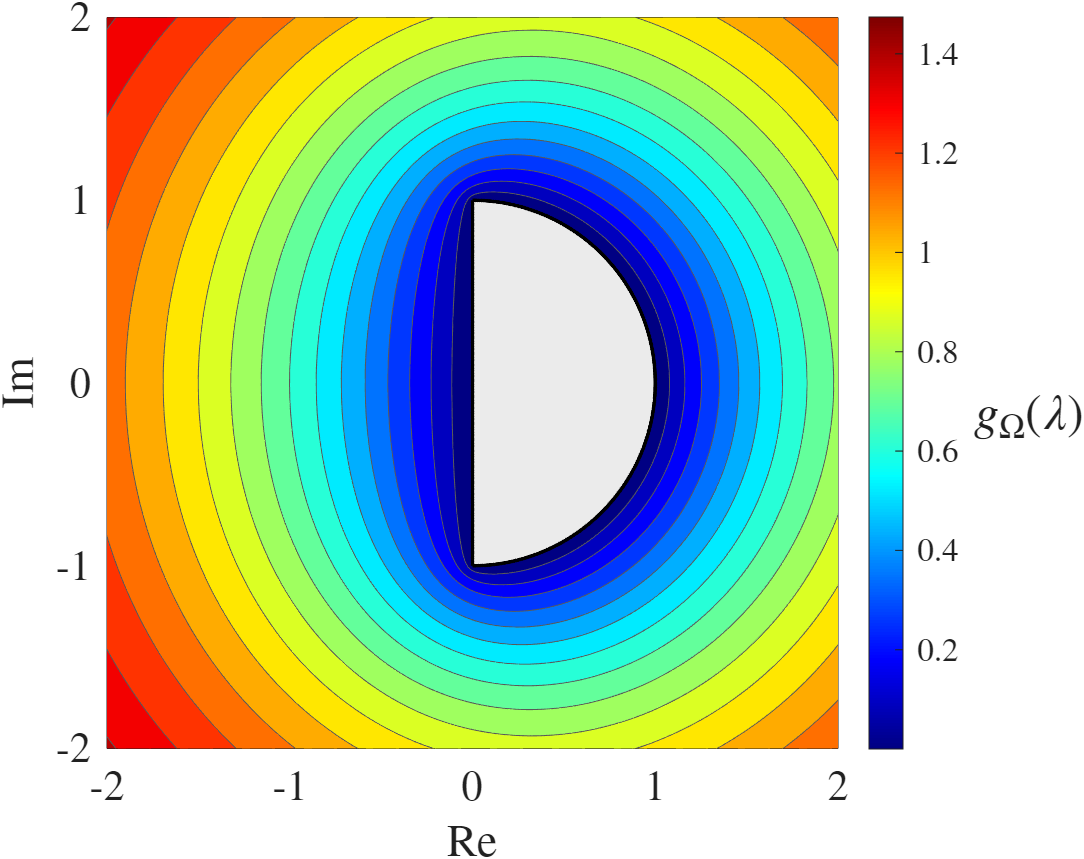}
	\caption{\footnotesize Contour plot of Green's function $g_{\Omega}$ for the unit half disc $\Omega$.}
	\label{fig:greensfunc}
\end{figure}

Green's function arises as a fundamental quantity in our analysis as it captures the maximal growth rate of a polynomial outside of a set $S$, when constrained in sup-norm on $S$. This classical result is due to Bernstein and Walsh \citep{bernstein1912ordre,walsh1926grad}; see for example
Lemma 3.6 of~\citep{saff2010logarithmic} for a short modern exposition, or~\citep[Theorem 1]{Driscoll_Toh_Trefethen_1998} for classical applications to the analysis of matrix iterations. 

\begin{lemma}[Bernstein--Walsh Theorem]
\label{lem:green-poly}
	Let $S\subset \mathbb{C}$ be a non-empty compact set such that $\hat{ \mathbb{C}}\setminus S$ is simply connected. For any $\lambda \in \C \setminus S$ and any polynomial $p$ of degree at most $T$,
	\begin{equation*}
		|p(\lambda)| \leq e^{Tg_S(\lambda)} \|p\|_S.
	\end{equation*}
\end{lemma}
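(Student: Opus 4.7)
The plan is to apply the maximum modulus principle to an auxiliary ratio that ``renormalizes'' the polynomial by the conformal map. Specifically, I would define
$$
h(\lambda) \;=\; \frac{p(\lambda)}{\Phi_S(\lambda)^T}
$$
on the domain $U \defeq \hat{\C}\setminus S$, and then show that $|h| \leq \|p\|_S$ on all of $U$. Rearranging and using $g_S = \log|\Phi_S|$ yields the desired inequality $|p(\lambda)| \leq e^{T g_S(\lambda)} \|p\|_S$ for every $\lambda \in \C \setminus S$.

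First I would verify that $h$ is well-defined and holomorphic on $U$. Since $\Phi_S$ is a conformal map from $U$ onto $\hat{\C}\setminus D$, it does not vanish on $U$, so $h$ is holomorphic on $U\setminus\{\infty\}$. The only delicate point is the behavior at $\lambda=\infty$: by the normalization $\Phi_S(\infty)=\infty$ and $\Phi_S'(\infty)>0$ in~\cref{def:green}, a Laurent expansion at infinity gives $\Phi_S(\lambda)= c\lambda + O(1)$ with $c>0$, hence $\Phi_S(\lambda)^T \sim c^T \lambda^T$. Since $\deg p \leq T$, the ratio $h$ is bounded (in fact holomorphic after removing the removable singularity) at $\infty$, so $h$ extends holomorphically to the entire Riemann-surface domain $U$ including $\infty$.

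Next I would control $h$ on the boundary $\partial U = \partial S$. Since $\Phi_S$ extends continuously to $\partial S$ with $|\Phi_S|=1$ there (as it maps $\partial S$ to $\partial D$), we obtain $|h(\lambda)| = |p(\lambda)| \leq \|p\|_S$ for all $\lambda \in \partial S$. The maximum modulus principle on the simply connected domain $U \subset \hat{\C}$, applied to the bounded holomorphic function $h$, then gives $|h(\lambda)| \leq \|p\|_S$ everywhere on $U$. Substituting the definition of $h$ and taking logs in the inequality $|p(\lambda)| \leq |\Phi_S(\lambda)|^T\,\|p\|_S$ yields the claim.

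I do not expect any serious obstacle here; the only mildly subtle point is justifying that $h$ is bounded (and in fact holomorphic) at $\infty$, which requires the precise normalization of $\Phi_S$ together with $\deg p \leq T$. A technical caveat is that the maximum modulus argument strictly speaking requires the boundary limit values of $h$ to be controlled; if $\partial S$ is irregular, one can handle this by first replacing $S$ with a smoothed-out neighborhood $S_\eps \supset S$ whose boundary is a smooth Jordan curve, applying the above argument to $S_\eps$, and then letting $\eps \to 0$ using continuity of $g_S$ and monotonicity of $\|p\|_{S_\eps}$. This standard approximation step is the only point where some care is needed, but it is routine.
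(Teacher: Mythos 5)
The paper does not prove \cref{lem:green-poly} at all---it is invoked as the classical Bernstein--Walsh theorem with citations to Bernstein, Walsh, and Saff---so there is no in-paper proof to compare against; your argument is the standard textbook proof of that classical result, and it is essentially correct. The core step (apply the maximum modulus principle to $h=p/\Phi_S^{T}$, which is holomorphic on $\hat{\C}\setminus S$ including at $\infty$ because $\Phi_S(\lambda)=c\lambda+O(1)$ with $c>0$ and $\deg p\le T$, and satisfies $|h|=|p|\le\|p\|_S$ on $\partial S$ where $|\Phi_S|=1$) is exactly right. The only place needing care is the one you flag: for general compact $S$ the conformal map need not extend continuously to $\partial S$, and your proposed fix via the $\eps$-fattening $S_\eps$ has two small wrinkles. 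First, $\hat{\C}\setminus S_\eps$ need not be simply connected (fattening can close up ``gaps'' in $S$ and create holes); this is repaired by passing to the polynomially convex hull of $S_\eps$, which changes neither $\|p\|_{S_\eps}$ (maximum modulus) nor the exterior Green's function, or more cleanly by using the level sets $S_\delta=S\cup\{g_S\le\delta\}$, whose boundaries are analytic Jordan curves for a.e.\ $\delta$ and for which $g_{S_\delta}=g_S-\delta$ exactly, so no limiting continuity argument is needed. Second, the claim $g_{S_\eps}\to g_S$ as $\eps\to0$ is true but itself rests on continuity of capacity under decreasing compacts, so it deserves a citation rather than being called routine. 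Alternatively, you can bypass boundary regularity entirely with the subharmonic version of the same idea: $u=\log|p|-Tg_S-\log\|p\|_S$ is subharmonic on $\hat{\C}\setminus S$ (bounded above near $\infty$, hence extendable there), and since $g_S\ge0$ one has $\limsup_{z\to\zeta}u(z)\le0$ at \emph{every} $\zeta\in\partial S$ with no regularity assumption, so the maximum principle for subharmonic functions gives $u\le0$ directly. Either way, your approach is sound and matches the classical proof underlying the paper's citation.
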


Relatedly, Green's function also captures the maximal growth rate of the \textit{derivatives} of polynomials. Such growth bounds are often called Bernstein-type inequalities (recalled in~\cref{ssec:CCLB} when we make use of them) and depend on Green's function through $\frac{\partial}{\partial \mathbf{n}} g_{S}(\lambda)$ which denotes, for $\lambda \in \partial S$, the derivative of $g_S$ in the direction normal to $S$ (with orientation pointing away from $S$). For ease of recall, we state here that the explicit value of this quantity for $S=\Omega$ and $\lambda=0$.

\begin{cor}\label{cor:green_partial} 
	$\frac{\partial}{\partial \mathbf{n}} g_{\Omega}(0) =\frac{4}{3\sqrt{3}}$.
\end{cor}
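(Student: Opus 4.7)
The plan is to use Definition 2.10 to replace Green's function with the explicit conformal map, namely $g_{\Omega} = \log|\Phi_{\Omega}|$, and then reduce the normal derivative at $0\in\partial\Omega$ to the modulus $|\Phi_{\Omega}'(0)|$ via a standard boundary identity. The final step is a direct computation of $|\Phi_{\Omega}'(0)|$ from the formula in Lemma 2.9.

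The boundary identity I would invoke is: for any conformal bijection $\Phi:\hat{\C}\setminus S\to\hat{\C}\setminus D$ that extends analytically across a smooth boundary arc of $\partial S$, one has $|\Phi|\equiv 1$ on that arc, hence locally $g_S=\log|\Phi|=\operatorname{Re}(\log\Phi)$ is harmonic, vanishes on $\partial S$, and satisfies $|\nabla g_S|=|\Phi'/\Phi|=|\Phi'|$. Since $g_S\ge 0$ with equality on $\partial S$, its gradient points in the outward normal direction at smooth boundary points, so $\tfrac{\partial g_S}{\partial\mathbf{n}}=|\Phi'|$ there. The point $0$ lies in the interior of the flat imaginary-axis edge of $\partial\Omega$, which is manifestly smooth, so this identity applies and reduces the problem to computing $|\Phi_{\Omega}'(0)|$.

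For the computation, I would first simplify the expression in Lemma 2.9 to
$$\Phi_{\Omega}(\lambda) \;=\; -\tfrac{1}{2} + \tfrac{\sqrt{3}\,i}{2}\cdot\frac{1+u(\lambda)}{1-u(\lambda)}, \qquad u(\lambda)=\Bigl(\tfrac{\lambda-i}{\lambda+i}\Bigr)^{2/3},$$
and then apply the chain rule. Setting $w(\lambda)=(\lambda-i)/(\lambda+i)$, one has $w(0)=-1$ and $w'(0)=-2i$. With the principal branch (as confirmed below) this gives $u(0)=e^{i2\pi/3}$ and $u'(0)=\tfrac{2}{3}w(0)^{-1/3}w'(0)$. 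A short algebraic simplification of $\Phi_{\Omega}' = \tfrac{\sqrt{3}i}{2}\cdot\tfrac{2u'}{(1-u)^2}$ at $\lambda=0$ collapses nicely (the factor $(\sqrt{3}+i)(1+i\sqrt{3})$ equals $4i$) and yields $\Phi_{\Omega}'(0)=4\sqrt{3}/9$, whose modulus is $4/(3\sqrt{3})$.

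The main bookkeeping obstacle is correctly pinning down the branch of $w^{2/3}$ implicit in Lemma 2.9; different branches would give different values of $u(0)$ and spoil the conformality. I would resolve this by a quick sanity check: with the principal branch chosen above, the computation gives $\Phi_{\Omega}(0)=-1$, which has modulus $1$ as required for a point in $\partial\Omega$, confirming that this branch is the one making $\Phi_{\Omega}$ map $\hat{\C}\setminus\Omega$ onto $\hat{\C}\setminus D$ near $\lambda=0$. Once the branch is fixed, the remainder is routine algebra.
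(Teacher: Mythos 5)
Your proposal is correct and follows essentially the same route as the paper: write $g_{\Omega}=\log|\Phi_{\Omega}|$, reduce the normal derivative at the boundary point $0$ to $|\Phi_{\Omega}'(0)|/|\Phi_{\Omega}(0)|$, and evaluate this from the explicit formula in the conformal-mapping lemma, obtaining $\frac{4}{3\sqrt{3}}$. You simply fill in details the paper leaves implicit (the boundary identity $\tfrac{\partial g}{\partial \mathbf{n}}=|\Phi'/\Phi|$, the branch choice via the check $\Phi_{\Omega}(0)=-1$, and the derivative computation $\Phi_{\Omega}'(0)=4\sqrt{3}/9$), all of which are accurate.
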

\begin{proof}
	By~\cref{lem:conformal} and~\cref{def:green}, 
	$\frac{\partial}{\partial \mathbf{n}} g_{\Omega}(0) =\frac{\partial}{\partial \mathbf{n}} \log |\Phi_{\Omega}(\lambda)| = \frac{|\Phi'_\Omega(0)|}{|\Phi_\Omega(0)|} = \frac{4/(3\sqrt{3})}{1}$.	
\end{proof}

\section{Lower bounds for symmetric algorithms}\label{sec:LB}
In this section we prove lower bounds on the computational complexity of  \textit{symmetric} first-order algorithms for solving (strongly) convex-concave min-max problems. This immediately implies analogous lower bounds for solving variational inequalities for (strongly) monotone operators by the standard reduction in~\cref{ssec:setup}. For simplicity of exposition, here we restrict to non-adaptive algorithms; in~\cref{sec:adaptive} we show how the results extend to adaptive algorithms.

\subsection{Strongly-convex-strongly-concave problems}\label{ssec:SCSCLB}

Our first result is for the strongly-convex-strongly-concave (SCSC) setting. Recall that $\kappa$-conditioned means $\mu$-SCSC and $L$-smooth for condition number $\kappa = L/\mu$. 
This result improves on the previous state of the art lower bound of $\exp(-2 T/\kappa)$ (up to lower order terms in $T,\kappa$) given in \citet[Proposition 2]{ibrahim2020linear}. Our improved bound allows us to show a computational gap between symmetric and asymmetric algorithms for the first time---since the improvement of this lower bound from $\exp(-2 \cdot T/\kappa)$ to roughly $\exp(-(4\sqrt{3}/3) \cdot T/\kappa) \approx \exp(-0.77 \cdot T/\kappa)$ crosses the threshold of the $\exp(-1 \cdot T/\kappa)$ upper bound established for asymmetric algorithms later in~\cref{thm:SCSC-ub}. 

\begin{theorem}[Lower bound for symmetric algorithms on SCSC problems]\label{thm:SCSCLB}
    For any non-adaptive symmetric Krylov-subspace algorithm and any number of iterations $T$, there exists a $\kappa$-conditioned quadratic min-max problem~\eqref{eq:mm_quad} with solution $z^*$ such that the convergence rate is no faster than
    \[
        \frac{\|z_T-z^*\|}{\|z_0 - z^*\|} \geq \Bigg|\Phi_{\Omega}\Bigg(\frac{-1}{\kappa-1}\Bigg)\Bigg|^{-T} =  \exp\Bigg( -\Bigg(\frac{4}{3\sqrt{3}} +o_{\kappa}(1)\Bigg) \frac{T}{\kappa} \Bigg) \,.
    \]
\end{theorem}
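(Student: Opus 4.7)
My plan is to reduce the lower bound to a scalar polynomial approximation problem on $\sigma(\cJ_\mu)$, apply Bernstein--Walsh at the origin, and then bound the resulting Green's function by $g_\Omega(-1/(\kappa-1))$ via an affine change of variables combined with a set-containment comparison of Green's functions.

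First I would invoke \cref{lem:sym-algs}: for any non-adaptive symmetric Krylov-subspace algorithm the polynomial $p_T\in\cP_T$ in $z_T-z^* = p_T(\bm J\bm H)(z_0-z^*)$ depends only on the algorithm, not on the problem instance. By \cref{lem:spectrum-JH} every $\lambda\in\sigma(\cJ_\mu)$ is an eigenvalue of some matrix in $\cJ_\mu$, and choosing $z_0-z^*$ in the corresponding real invariant subspace realizes the ratio $|p_T(\lambda)|$ exactly. Taking a supremum over $\lambda$ reduces the theorem to
\[
\inf_{p\in\cP_T}\|p\|_{\sigma(\cJ_\mu)} \;\geq\; |\Phi_\Omega(-1/(\kappa-1))|^{-T}.
\]
Since $\mu>0$ the origin lies strictly outside $\sigma(\cJ_\mu)$, so applying \cref{lem:green-poly} at $\lambda=0$ to any $p\in\cP_T$ (using $p(0)=1$) gives $\|p\|_{\sigma(\cJ_\mu)}\ge e^{-T g_{\sigma(\cJ_\mu)}(0)}$. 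It thus suffices to prove the Green's-function inequality
\[
g_{\sigma(\cJ_\mu)}(0) \;\le\; g_\Omega(-1/(\kappa-1)) \;=\; \log|\Phi_\Omega(-1/(\kappa-1))|.
\]

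The main obstacle is that the rescaled spectral range $\sigma(\cJ_\mu)/L=\{|z|\le 1,\ \mathrm{Re}(z)\ge 1/\kappa\}$ is \emph{not} M\"obius-equivalent to $\Omega$: its corners have non-right angles, unlike those of $\Omega$, so no M\"obius map realizes the identification and one cannot directly express $g_{\sigma(\cJ_\mu)}$ through $g_\Omega$. I plan to sidestep this with the real affine map $M(z)=(z-\mu)/(L-\mu)$. Because $M$ fixes $\infty$ with positive leading coefficient, it preserves Green's function, so $g_{\sigma(\cJ_\mu)}(0)=g_{M(\sigma(\cJ_\mu))}(M(0))$ with $M(0)=-1/(\kappa-1)$. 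A short calculation would show
\[
M(\sigma(\cJ_\mu)) \;=\; \bigl\{w\in\C:\mathrm{Re}(w)\ge 0,\ |w+1/(\kappa-1)|\le \kappa/(\kappa-1)\bigr\},
\]
and I would verify the containment $\Omega\subseteq M(\sigma(\cJ_\mu))$ directly from $|w+1/(\kappa-1)|^2 = |w|^2+2\mathrm{Re}(w)/(\kappa-1)+1/(\kappa-1)^2 \le 1+2/(\kappa-1)+1/(\kappa-1)^2 = (\kappa/(\kappa-1))^2$ for $w\in\Omega$. The standard Green's-function comparison principle---the maximum principle applied to the bounded harmonic difference $g_\Omega-g_{M(\sigma(\cJ_\mu))}$ on the common exterior, which is $\ge 0$ on $\partial M(\sigma(\cJ_\mu))$ and at $\infty$---then yields $g_{M(\sigma(\cJ_\mu))}\le g_\Omega$ on that exterior; evaluating at $w=-1/(\kappa-1)$ completes the Green's-function bound.

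Finally, for the asymptotic rate $\exp(-(4/(3\sqrt 3)+o_\kappa(1))T/\kappa)$, I would Taylor expand $g_\Omega$ at its boundary point $0$ in the outward normal direction (the negative real axis): $g_\Omega(-\eps) = \eps\,\tfrac{\partial}{\partial\mathbf{n}}g_\Omega(0)+o(\eps) = (4/(3\sqrt 3))\eps+o(\eps)$ by \cref{cor:green_partial}, and then substitute $\eps=1/(\kappa-1)=1/\kappa+O(1/\kappa^2)$. The conceptually hardest step is the Green's-function inequality: the affine-map-plus-containment trick is what lets a bound expressed purely through the simpler set $\Omega$ control $g_{\sigma(\cJ_\mu)}(0)$, and it is precisely what pushes the lower bound past $\exp(-T/\kappa)$, thereby producing a separation from the asymmetric upper bound in \cref{thm:SCSC-ub}.
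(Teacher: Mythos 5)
Your proposal is correct and follows essentially the same route as the paper's proof: reduce to the extremal polynomial problem $\min_{p\in\cP_T}\max_{\lambda\in\sigma(\cJ_\mu)}|p(\lambda)|$ via \cref{lem:sym-algs}, apply Bernstein--Walsh at $0$, transfer to $g_\Omega(-1/(\kappa-1))$ by affine rescaling, and conclude with the expansion from \cref{cor:green_partial}. The only (equivalent) difference is where the set comparison enters: the paper first restricts the sup to the inscribed half-disc $\Omega_\mu=\{\lambda:|\lambda-\mu|\le L-\mu,\ \Re(\lambda)\ge\mu\}\subset\sigma(\cJ_\mu)$ and uses its exact Green's function, whereas you apply Bernstein--Walsh on all of $\sigma(\cJ_\mu)$ and then invoke Green's-function monotonicity under the containment $\Omega\subseteq M(\sigma(\cJ_\mu))$, which is precisely the affine image of the same containment.
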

\begin{proof}
By~\cref{lem:sym-algs}, the $T$-th iterate $z_T$ of the algorithm can be expressed as
\begin{equation}\label{eq:pf-SCSCLB-poly}
    z_T-z^* = p_T(\bm J \bm H)(z_0-z^*)\,,
\end{equation}
where $p_T \in \cP_T$, i.e., $p_T$ is a polynomial of degree at most $T$ satisfying the normalization constraint $p_T(0)=1$.
In the worst-case over $\kappa$-conditioned $\bm H$ and solutions $z^*$, the convergence rate is no faster than
\begin{align*}
\max_{\bm H,\, z^*} \frac{\|z_T-z^*\|}{\|z_0-z^*\|} = \max_{\bm H,\, z^*} \frac{\left\|p_T(\bm J \bm H)(z_0-z^*) \right\|}{\|z_0 - z^*\|}
= \max_{\bm H}\left\|p_T(\bm J\bm H) \right\| 
\geq \max_{\bm H} \left|\lambda_{\max}\left(p_T(\bm J\bm H)\right) \right|
\geq 
\max_{\lambda\in \sigma(\mathcal{J}_\mu)}|p_T(\lambda)|\,.
\end{align*}
Above, the first step is by definition of $p_T$ in~\eqref{eq:pf-SCSCLB-poly}, the second step is by definition of the operator norm, the third step is because the operator norm of a matrix is bounded below by its spectral radius, and the final step is because the eigenvalues of $p_T(\bm J \bm H)$ are simply given by $p_T$ evaluated at the eigenvalues of $\bm J \bm H$ (see for example Theorem 1.1.6 of~\citep{horn2012matrix}).
\par We conclude that the fastest possible convergence rate for such algorithms is no faster than 
\begin{align}
    \min_{p \in \cP_T} \max_{\lambda \in \sigma (\cJ_{\mu})} |p(\lambda)|\,.
    \label{eq:SCSCLB-poly}
\end{align}
Classical tools from approximation theory let us lower bound such extremal polynomial problems via an associated Green's function (\cref{lem:green-poly}). To invoke this, it is convenient to first replace $\sigma(\mathcal{J}_\mu)$ by a slightly smaller set with an explicit Green's function, namely the half-disc $\Omega_\mu := \{\lambda: |\lambda-\mu|\leq L-\mu, \text{Re}(\lambda)\geq \mu\}$ that has center $\mu$ and radius $L-\mu$. Since $\sigma(\mathcal{J}_\mu) \supset \Omega_{\mu}$ by~\cref{lem:spectrum-JH}, an application of \cref{lem:green-poly} with $S = \Omega_{\mu}$ and $\lambda = 0$ gives
\begin{align*}
    \min_{p\in \cP_T} \max_{\lambda\in \sigma(\mathcal{J}_\mu)}|p(\lambda)|
    \geq \min_{p\in \cP_T} \max_{\lambda\in \Omega_\mu}|p(\lambda)|
    \geq
    \exp( -T g_{\Omega_\mu}(0))\,.
\end{align*}
This quantity involving the Green's function is explicitly computed as:
\begin{align*}
\exp( -T g_{\Omega_{\mu}}(0))= \left|\Phi_{\Omega_{\mu}}\left(0\right)\right|^{-T}
=\left|\Phi_{\Omega}\left(\frac{-1}{\kappa -1}\right)\right|^{-T}
=
\exp\left( -\left(\frac{4}{3\sqrt{3}} +o_{\kappa}(1)\right) \frac{T}{\kappa} \right)
\,.
\end{align*}
Above, the first step is by definition of the Green's function $g_{\Omega_{\mu}} = \log |\Phi_{\Omega_{\mu}}|$ in terms of the conformal map $\Phi_{\Omega_{\mu}}$ from the exterior of $\Omega_{\mu}$ to the exterior of the unit disc (\cref{def:green}), the second step is by recentering and rescaling $\Omega_{\mu}$ to the standard half disc $\Omega$, and the final step is by plugging in the explicit conformal mapping $\Phi_{\Omega}$ (\cref{lem:conformal}) and using the asymptotic expansion $|\Phi_{\Omega}(-\tfrac{1}{\kappa - 1})|^{-1} = 1 - \tfrac{4}{3\sqrt{3} \kappa} + O(\tfrac{1}{\kappa^2}) = \exp(-\tfrac{4}{3\sqrt{3} \kappa} + O(\tfrac{1}{\kappa^2}))$ for $\kappa \to \infty$.
\end{proof}

\subsection{Convex-concave problems}\label{ssec:CCLB}
We now turn to the convex-concave setting, i.e., $\mu=0$. Our main result here is the following lower bound for symmetric algorithms. This sharpens the well-known $O(1/T)$ lower bound~\citep{Yoon_Ryu_2021} sufficiently to establish a separation from the faster rates of asymmetric algorithms shown later. 

\begin{theorem}[Lower bound for symmetric algorithms on convex-concave problems]\label{thm:CCLB}
For any non-adaptive symmetric Krylov-subspace algorithm and any number of iterations $T$, there exists an $L$-smooth quadratic min-max problem~\eqref{eq:mm_quad} with solution $z^*$ such that the convergence rate is no faster than
    $$\|\nabla f(z_T)\|\geq \left(\frac{3\sqrt{3}}{2}+o_T(1) \right)\frac{L\|z_0-z^*\|}{T}.$$
\end{theorem}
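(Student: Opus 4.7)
The proof closely parallels that of Theorem \ref{thm:SCSCLB}, with one important twist: because $\mu = 0$ places the origin \emph{on} the boundary of the spectral range $\sigma(\cJ_0)$, bounding $\|z_T-z^*\|$ directly (as in the SCSC case) would be vacuous since we could simply take $p_T \equiv 1$ near $0$. I would instead bound the gradient norm $\|\nabla f(z_T)\|$: the extra factor of $\bm H$ effectively multiplies the iteration polynomial by $\lambda$, producing a polynomial that \emph{vanishes} at the origin---exactly the structure needed for a Bernstein-type inequality to bite.

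\textbf{Phase 1: reduction to an extremal polynomial problem.} By Lemma \ref{lem:sym-algs}, write $z_T - z^* = p_T(\bm J \bm H)(z_0-z^*)$ for a fixed $p_T \in \cP_T$ (fixed because the algorithm is non-adaptive). Using $\nabla f(z) = \bm H(z - z^*)$, the identity $\bm H = \bm J(\bm J \bm H)$ (since $\bm J^2 = \bm I$), and the fact that $\bm J$ is orthogonal so preserves norms, rewrite
\[
\|\nabla f(z_T)\| = \|\bm J(\bm J \bm H)\, p_T(\bm J \bm H)(z_0-z^*)\| = \|q_T(\bm J \bm H)(z_0 - z^*)\|,
\]
where $q_T(\lambda) := \lambda\, p_T(\lambda)$. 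Passing through operator norm $\geq$ spectral radius, exactly as in Theorem \ref{thm:SCSCLB}, yields that the worst-case convergence rate is at least
\[
\min_{p \in \cP_T}\; \max_{\lambda \in \sigma(\cJ_0)} |\lambda\, p(\lambda)| \;=\; \min_{p \in \cP_T}\; \max_{\lambda \in \Omega_L} |\lambda\, p(\lambda)|,
\]
where $\Omega_L := L\cdot \Omega$ is the spectral range by Lemma \ref{lem:spectrum-JH}.

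\textbf{Phase 2: Green's function bound.} Recasting the inner objective via $q(\lambda) = \lambda\, p(\lambda)$, the extremal problem becomes a minimum over polynomials of degree at most $T+1$ satisfying the normalization $q(0) = 0,\; q'(0) = 1$. For such polynomials I would invoke the ``prescribed-root'' Bernstein-type inequality (referenced in Table \ref{tab:CC} and the paper as Lemma \ref{lem:prescribedroots}), which gives
\[
1 \;=\; |q'(0)| \;\leq\; (T+1) \cdot \tfrac{1}{2} \cdot \frac{\partial g_{\Omega_L}}{\partial \mathbf{n}}(0) \cdot \|q\|_{\Omega_L}.
\]
Rearranging and plugging in $\frac{\partial g_{\Omega_L}}{\partial \mathbf{n}}(0) = \tfrac{1}{L}\,\frac{\partial g_\Omega}{\partial \mathbf{n}}(0) = \tfrac{4}{3\sqrt{3}\,L}$ (by rescaling and Corollary \ref{cor:green_partial}) gives
\[
\|q\|_{\Omega_L} \;\geq\; \frac{3\sqrt{3}\,L}{2(T+1)} \;=\; \left(\frac{3\sqrt{3}}{2} + o_T(1)\right) \frac{L}{T},
\]
which is the desired bound.

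\textbf{Main obstacle.} The factor of $\tfrac{1}{2}$ in the Bernstein-type inequality above is essential: a naive boundary Bernstein inequality of the form $|q'(0)| \leq n \cdot \frac{\partial g_S}{\partial \mathbf{n}}(0) \cdot \|q\|_S$ produces the correct asymptotic order $L/T$ but is off by a factor of $2$, which would fail to cross the threshold needed to separate symmetric algorithms from the asymmetric upper bound of Theorem \ref{thm:CC-ub}. This improvement exploits two facts jointly: that $0$ lies on the flat portion of $\partial \Omega$ (so locally $\Omega$ looks like a half-plane), and that $q$ vanishes at $0$. Intuitively, one ``doubles'' the polynomial via Schwarz reflection across the imaginary axis to convert the half-disc estimate into a full-disc estimate at an interior point. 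Establishing this refined inequality---the content of the paper's Lemma \ref{lem:prescribedroots}---is the only nontrivial step beyond the template of Theorem \ref{thm:SCSCLB}.
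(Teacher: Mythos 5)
Your proposal is correct and follows essentially the same route as the paper's proof: the identical reduction via \cref{lem:sym-algs} and the extra factor of $\bm J$ to the extremal problem $\min_{p\in\cP_T}\max_{\lambda\in\sigma(\cJ_0)}|\lambda p(\lambda)|$, the same recasting as polynomials $q$ with $q(0)=0$, $q'(0)=1$, and the same application of the prescribed-root Bernstein inequality (\cref{lem:prescribedroots}) together with the rescaled Green's function derivative $\frac{4}{3\sqrt{3}L}$ from \cref{cor:green_partial}. The only (inconsequential) differences are your use of $T+1$ instead of the paper's $T$ inside the Bernstein bound, which is absorbed by the $o_T(1)$ factor, and your heuristic reflection argument for the factor $\tfrac12$, which the paper instead obtains by citing the Rahman--Mohammad and Nagy--Totik strengthenings.
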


Note that convergence is measured via the gradient norm $\|\nabla f(z_T)\|$. Although distance to optimum $\|z_T - z^*\|$ is a meaningful metric in the SCSC setting (c.f.\ \cref{thm:SCSCLB}), it is well-known that such convergence rates are impossible in the convex-concave setting due to pathologically flat objectives. 

A core ingredient in our proof of~\cref{thm:CCLB} is a Bernstein-type inequality, i.e., an inequality bounding the derivative of a polynomial by the largest value that the polynomial takes on a given set. Recall that a Jordan curve is the image of an injective continuous map of a circle, and a Jordan arc is the image of an injective continuous map of a line segment; we will apply this for the Jordan curve being the boundary of the half-disc $\partial\Omega$ (the boundary of the relevant spectral shape for convex-concave problems, see~\cref{lem:spectrum-JH}) and the Jordan arc being a small interval of the imaginary axis around $0$ (the relevant prescribed root, see the proof of~\cref{thm:CCLB} below).

\begin{lemma}[Bernstein inequality for polynomials with prescribed zeros on general domains]\label{lem:prescribedroots}
Let $K\subset \mathbb{C}$ be a compact set bounded by a Jordan curve. Let $\lambda_0$ be a point on the boundary of $K$, and assume the boundary of $K$ is a twice continuously differentiable Jordan arc in a neighborhood of $\lambda_0$. Let $p_T$ be a polynomial of degree at most $T$. Further assume $\lambda_0$ is a root of $p_T$. Then
$$|p_T'(\lambda_0)| \leq \left(1+o_T(1)\right) \cdot \frac{T}{2} \cdot \frac{\partial}{\partial \mathbf{n}}  g_K(\lambda_0) \cdot \|p_T\|_K.$$
\end{lemma}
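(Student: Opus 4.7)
The plan is to transplant the problem to the unit disc via the conformal map $\Phi_K$, then invoke a classical boundary Bernstein inequality for polynomials with a prescribed boundary root. First I would introduce the conformal map $\Phi := \Phi_K$ of $\hat{\C} \setminus K$ onto $\hat{\C} \setminus D$ from \cref{def:green}. Since $\partial K$ is $C^2$ near $\lambda_0$, the Kellogg--Warschawski boundary-regularity theorem guarantees that $\Phi$ extends conformally to a full open neighborhood of $\lambda_0$ in $\hat{\C}$ (across $\partial K$). Set $w_0 := \Phi(\lambda_0) \in \partial D$ and $F(w) := p_T(\Phi^{-1}(w))$. Then $F$ is holomorphic on $\{|w| > 1\}$ and in a neighborhood of $w_0$, satisfies $F(w_0)=0$, $|F| \leq M := \|p_T\|_K$ on $|w| = 1$, and by Bernstein--Walsh (\cref{lem:green-poly}) $|F(w)| \leq M|w|^T$ on $|w| > 1$.

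Next I would use the chain rule $p_T'(\lambda_0) = F'(w_0)\,\Phi'(\lambda_0)$ together with the identity $\partial g_K / \partial \mathbf{n}(\lambda_0) = |\Phi'(\lambda_0)|$---obtained by differentiating $g_K = \log|\Phi|$ in the outward normal direction and using that $g_K$ vanishes on $\partial K$---to reduce the claim to proving the disc estimate
\[
|F'(w_0)| \leq (1 + o_T(1)) \cdot \frac{T}{2} \cdot M.
\]

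For this disc estimate, my plan is to approximate $F$ on the unit circle by a genuine polynomial $q$ of degree $T$ in $w$ satisfying $q(w_0) = 0$ and $\|q\|_D \leq M(1 + o_T(1))$, and then invoke the classical boundary Bernstein inequality on the disc: for any polynomial $q$ of degree at most $T$ with $\|q\|_D \leq M'$ and $q(w_0)=0$ at a point $|w_0|=1$, one has $|q'(w_0)| \leq (T/2)M'$ (sharp, with equality at $q(w) = (M'/2)((w/w_0)^T - 1)$). The polynomial $q$ arises by truncating the Laurent expansion of $F$ at infinity to its non-negative-power part; the remainder has small sup-norm on $|w|=1$ with quantitative control coming from the $C^2$ regularity of $\partial K$ (which implies fast decay of the Faber-type coefficients of $F$), and so its contribution to the derivative at $w_0$ is of order $o_T(1) \cdot TM$ and thus absorbed by the $(1+o_T(1))$ factor in the target bound.

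The main obstacle I anticipate is extracting the sharp leading constant $T/2$ rather than a cruder value. A naive Cauchy-integral estimate on a small circle of radius $\delta$ around $w_0$ (using only $|F| \leq M$ on $|w|=1$ and the growth $|F(w)| \leq M|w|^T$ on $|w|>1$) produces, after optimizing $\delta \sim 1/T$, the bound $(eT/2)M$---off by a factor of $e$. The improvement requires essential use of the vanishing $F(w_0)=0$: the extremal polynomial is a Chebyshev-like polynomial on $K$ with a root prescribed at $\lambda_0$, and its derivative at $\lambda_0$ asymptotically matches $(T/2)(\partial g_K/\partial\mathbf{n})(\lambda_0) M$. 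Establishing the underlying classical disc inequality $|q'(w_0)| \leq (T/2)\|q\|_D$ for $q(w_0)=0$ with $|w_0|=1$ can be done via a symmetrization argument comparing $q$ with the reciprocal polynomial $w^T \overline{q(1/\bar w)}$, whose sum is real on $|w|=1$ and whose derivative at $w_0$ is controlled by standard Bernstein; but the Faber-approximation step controlling the error from $F$ to $q$ is where most of the technical bookkeeping will reside.
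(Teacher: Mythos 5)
Your reduction steps are fine as far as they go: the identity $\frac{\partial}{\partial \mathbf{n}} g_K(\lambda_0)=|\Phi_K'(\lambda_0)|$ (using the local boundary regularity to extend $\Phi_K$ smoothly near $\lambda_0$), the chain rule, and the disc inequality $|q'(w_0)|\le \tfrac{T}{2}\|q\|_D$ for a degree-$T$ polynomial with $q(w_0)=0$, $|w_0|=1$, which is exactly the \citep{Rahman_Mohammad_1967} ingredient the paper also invokes. The genuine gap is the transfer step from the transplanted function $F=p_T\circ\Phi_K^{-1}$ to an honest polynomial $q$ on the disc with $\|q\|_D\le(1+o_T(1))M$ and with the derivative at $w_0$ perturbed by only $o_T(1)\cdot TM$. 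Note first that this step cannot be soft: the disc estimate you reduce to is \emph{false} for the general class of functions you have after transplantation. For example, with $w_0=1$ take $F(w)=w^T G(1/w)$ where $G(\zeta)=M\,\frac{1-\zeta}{1-a\zeta}$ with $a\uparrow 1$; this $F$ is analytic in $|w|>1$ and across $w_0$, satisfies $|F|\le (1+o(1))M$ on $|w|=1$, $|F(w)|\le (1+o(1))M|w|^T$ outside, and $F(w_0)=0$, yet $|F'(w_0)|=|G'(1)|=\frac{M}{1-a}$ is arbitrarily large. So everything hinges on the quantitative approximation, and your proposed control of the truncation error does not hold up: the lemma only assumes $\partial K$ is $C^2$ \emph{locally near} $\lambda_0$ (and in the paper's application $K$ is a half-disc with corners at $\pm iL$), so global Faber-coefficient or boundary-smoothness arguments are unavailable; and even for globally smooth $K$, the Faber coefficients of an arbitrary degree-$T$ polynomial bounded by $M$ on $K$ are only bounded by $M$ each, so summing the $T+1$ tail terms $a_k\bigl(F_k\circ\Phi_K^{-1}(w)-w^k\bigr)$ gives errors of order $M$ in sup norm and of order $TM$ in the derivative at $w_0$ --- constant-factor losses that destroy precisely the sharp constant $\tfrac{T}{2}\frac{\partial}{\partial\mathbf{n}}g_K(\lambda_0)$ the lemma asserts.

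For comparison, the paper does not prove the transfer from scratch: it observes that Theorem 1.3 of \citep{Nagy_Totik_2005} already gives the inequality with constant $T$ instead of $T/2$ under exactly these local hypotheses, and that the proofs in \citep{Nagy_2005,Nagy_Totik_2005} use the classical Bernstein inequality on the disc only once, so substituting the factor-2 improvement of \citep{Rahman_Mohammad_1967} at a prescribed zero carries through. Those proofs handle the hard localization you are missing by a different mechanism (approximation of $K$ by lemniscates / polynomial inverse images with fast-decreasing-polynomial localization near $\lambda_0$), which is what produces the $(1+o_T(1))$ factor under only local $C^2$ regularity. To repair your route you would need to replace the global Laurent/Faber truncation by such a localization argument; as written, the approximation step is the missing idea, not mere bookkeeping.
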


\cref{lem:prescribedroots} combines two strengthenings of Bernstein's classical inequality \citep{bernstein1912ordre}: strengthened Bernstein inequalities for polynomials with prescribed zeros \citep{Rahman_Mohammad_1967} (note that $\lambda_0$ is a root of $p_T$) and Bernstein inequalities on general domains \citep{Nagy_Totik_2005,Nagy_2005} (Bernstein's original inequality was only for the disc). Theorem 1.3 of~\citep{Nagy_Totik_2005} establishes~\cref{lem:prescribedroots} but with twice as large an upper bound and without the assumption that $\lambda_0$ is a root.~\cref{lem:prescribedroots} follows by the same proof---simply replace the use of Bernstein's standard inequality with the strengthening in~\citep{Rahman_Mohammad_1967} which improves the bound by a factor of $2$ when $\lambda_0$ is a root.\footnote{Details: simply carry through the twofold improvement in the three proof steps. 1) Replace the standard Bernstein inequality with the twofold improvement of~\citep[Corollary 1]{Rahman_Mohammad_1967} in~\citep[page 452]{Nagy_2005}.
2) Carry through this tightened bound on page 455 to improve~\citep[Theorem 1]{Nagy_2005} by a factor of $2$. 3) Carry through this tightened bound in equation (3) on page 193 to improve~\citep[Theorem 1.3]{Nagy_Totik_2005} by a factor of $2$.}
This twofold improvement enables our theory to tightly bound the relevant extremal polynomial~\eqref{eq:CCLB-poly}, described below, as can be verified numerically.

\begin{proof}[Proof of~\cref{thm:CCLB}] We begin by reducing to an extremal polynomial problem, similarly to our analysis of the strongly-convex-strongly-concave setting (see the proof of~\cref{thm:SCSCLB}). As done there, let $p_T$ denote the polynomial corresponding to the algorithm when run for $T$ iterations. Then the worst-case convergence rate is no faster than
\begin{equation*}
\max_{\bm H,\, z^*} \frac{\|\nabla f(z_T)\|}
{\|z_0-z^*\|} = \max_{\bm H,\, z^*} \frac{\left\|\bm J \bm H p_T(\bm J \bm H)(z_0-z^*) \right\|}{\|z_0 - z^*\|}
= \max_{\bm H} \|\bm J \bm H p_T(\bm J \bm H)\|
\geq \max_{\lambda \in \sigma(\cJ_0)} |\lambda p_T(\lambda)|\,.
\end{equation*}
Above, the first step is because $\|\nabla f(z_T)\| = \|\nabla f(z_T) - \nabla f(z^*)\| = \| \bm H (z_T - z^*)\| = \| \bm H p_T(\bm J \bm H) (z_0 - z^*)\| = \| \bm J \bm H p_T(\bm J \bm H) (z_0 - z^*)\|$. Introducing the extra factor of $\bm J$ here does not affect the norm but ensures that the final expression in the above display depends on the matrix $\bm H$ only through $\bm J \bm H$.  
\par We conclude that the fastest possible convergence rate for such algorithms is no faster than
\begin{align}\label{eq:CCLB-poly}
\min_{p\in \cP_T} \max_{\lambda\in \sigma(\mathcal{J}_0)}|\lambda p(\lambda)| = \min_{q\in \mathcal{Q}_{T+1}} \max_{\lambda\in \sigma(\mathcal{J}_0)}|q(\lambda)|
\end{align}
where we define the shorthand $\mathcal{Q}_{T+1}=\{q: \textrm{deg}(q)\leq T+1,\; q(0)=0,\; q'(0)=1\}$. Note that the normalization is different than in~\cref{ssec:SCSCLB}, including constraints on both the function value and first derivative. Since $0$ is a root of $q\in \mathcal{Q}_T$, the Bernstein inequality~\cref{lem:prescribedroots} gives
\begin{align*}
|q'(0)| &\leq \left(1+o_T(1)\right) \cdot \frac{T}{2}\cdot \frac{\partial}{\partial \mathbf{n}} g_{\sigma(\mathcal{J}_0)}(0) \cdot \max_{\lambda\in \sigma(\mathcal{J}_0)}|q(\lambda)|\,.
\end{align*}
\par We now compute the quantity $\frac{\partial}{\partial \mathbf{n}} g_{\sigma(\mathcal{J}_0)}(0)$. Recall from~\cref{lem:spectrum-JH} that $\sigma(\cJ_0)$ is the half disc of radius $L$. This quantity is explicit for the half disc $\Omega$ of radius $1$ by~\cref{cor:green_partial}. To rescale appropriately, first observe that $g_{\sigma(\mathcal{J}_0)}(\lambda)= \log|\Phi_{\sigma(\mathcal{J}_0)}(\lambda)|=\log|\Phi_\Omega(\frac{\lambda}{L} )|= g_{\Omega}(\frac{\lambda}{L})$ by using the definition of Green's function in terms of the  conformal mapping and rescaling the conformal mapping. Hence by the chain rule and then~\cref{cor:green_partial}, 
\begin{align*}
    \frac{\partial}{\partial \mathbf{n}} g_{\sigma(\mathcal{J}_0)}(0)=\frac{1}{L}\frac{\partial}{\partial \mathbf{n}} g_{\Omega}(0)= \frac{4}{3\sqrt{3}L}\,.
\end{align*}
Combining the above three displays with the fact that $|q'(0)| = 1$ is normalized for all $q \in \cQ_T$ yields
\begin{align*}
    \min_{q \in \cQ_T}
    \max_{\lambda\in \sigma(\mathcal{J}_0)}|q(\lambda)|
    \geq 
    \left(\frac{3\sqrt{3}}{2}+o_T(1) \right)\frac{L\|z_0-z^*\|}{T}\,.
\end{align*}
\end{proof}

\section{Upper bounds for asymmetric algorithms}\label{sec:UB}
In this section we present \emph{asymmetric} algorithms that break the convergence rate lower bounds established in~\cref{sec:LB} for \emph{symmetric} algorithms. We do this for both the strongly and non-strongly convex-concave settings. In all settings, we use gradient-descent-ascent (GDA) with the slingshot stepsize schedules proposed in our previous work \citep{shugart25}.

\par We begin with brief background on slingshot stepsizes. Recall that the update of GDA is
\begin{equation*}
	\begin{aligned}
		x_{t+1} &= x_t - \alpha_t \nabla_x f(x_t,y_t) \\
		y_{t+1} &= y_t + \beta_t \nabla_y f(x_t,y_t).
	\end{aligned}
\end{equation*}
Note that GDA is a symmetric algorithm if and only if $\alpha_t=\beta_t$ for all $t$. Following \citep{shugart25}, we use slingshot stepsize schedules of the form
\begin{equation}
	\alpha_{2t} = -\beta_{2t}=-\alpha_{2t+1}=\beta_{2t+1}=h_t\,,  \quad \text{for } t=0,...,T/2-1\,,
\end{equation}
for appropriate choices of the magnitudes $h_t$. 
\begin{figure}
	\centering
	\includegraphics[width=0.65\linewidth]{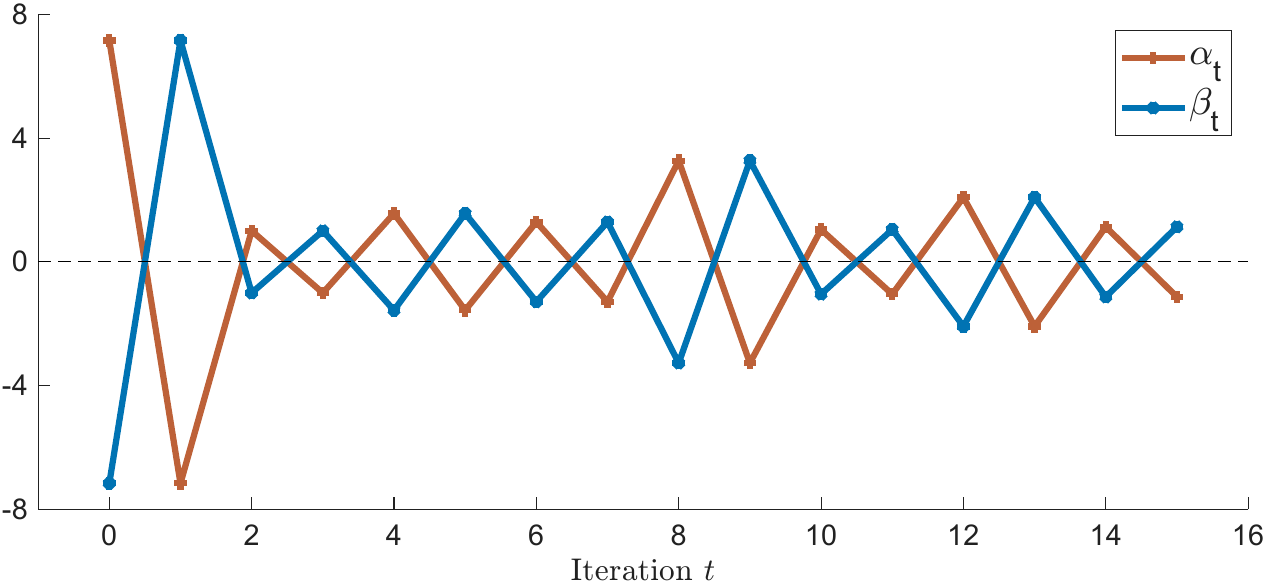}
	\caption{\footnotesize 
        Slingshot stepsize schedule (\cref{def:steps-SCSC}) 
        for 
        $\mu=0.1$, $L=1$, and $T=16$. 
    }
	\label{fig:steps}
\end{figure}
See~\cref{fig:steps} for an illustration. These stepsize schedules are asymmetric, time-varying, and periodically negative. They are implementable for min-max problems, but not for general variational inequality problems due to the asymmetry in $x$ and $y$; see \S1.2 of~\citep{shugart25} for a detailed discussion. By concatenating the variables $z = (x,y)$, this stepsize schedule results in paired updates of the form
\begin{align*}
	z_{2t+1} &= z_{2t} - h_t \nabla f(z_{2t})\,, \\
	z_{2t+2} &= z_{2t+1} + h_t \nabla f(z_{2t+1})\,,
\end{align*}
which for quadratic objectives $f(z) = \tfrac{1}{2} (z-z^*)^T \bm H (z - z^*)$ as in~\eqref{eq:mm_quad}, yields the two-step update
\begin{equation*}
	z_{2t+2}-z^* = \left( \bm I + h_t \bm H\right) \left( \bm I - h_t \bm H\right)(z_{2t}-z^*) = \left(\bm I -h_t^2 \bm H^2\right)(z_{2t}-z^*).
\end{equation*} 
For $T$ even, this results in a $T$-step cumulative update
\begin{align}\label{eq:matrix_poly_SSS}
	z_{T} - z^* = \prod_{t<T/2} (\bm I - h_t^2 \bm H^2) (z_0 - z^*)
\end{align}
which is given by a matrix polynomial of the Hessian $\bm H$ with roots $\pm 1/h_t$. We choose the stepsize magnitudes $\{h_t\}$ explicitly in terms of the roots of certain Chebyshev polynomials; the optimal choice depends on the problem setting---see~\cref{ssec:SCSCUB} and~\cref{ssec:CCUB} below for the strongly and non-strongly convex-concave settings, respectively.

\subsection{Strongly-convex-strongly-concave problems} \label{ssec:SCSCUB}

For this setting, we choose the slingshot stepsize magnitudes $\{h_t\}$ in terms of the roots of the degree-$T/2$ Chebyshev polynomial  $\cT_{T/2}^{[\mu^2,L^2]}$ of the first kind on the interval $[\mu^2, L^2]$. For background on Chebyshev polynomials, see for example the textbooks \citep{mason2002chebyshev,rivlin2020chebyshev}. 

\begin{defin}[Slingshot stepsize schedules for strongly-convex-strongly-concave min-max problems]\label{def:steps-SCSC}
	For any even number of iterations $T=2N$ and any parameters $0 < \mu \leq L < \infty$, the slingshot stepsize schedule for strongly-convex-strongly-concave problems is 
	\begin{align*}
		\alpha_{2t} = -\beta_{2t} = -\alpha_{2t+1} = \beta_{2t+1} = h_t\,, \qquad  t \in \{0,1,...,N-1\}\,,
	\end{align*}
	where $\{h_t\}_{t=0}^{N-1}$ are any permutation of $\{r_t^{-1/2}\}_{t=0}^{N-1}$, where 
	\begin{align*}
		r_t := \frac{L^2+\mu^2}{2} + \frac{L^2-\mu^2}{2} \cos\left( \frac{2t+1}{T}\pi\right)\,, \qquad t \in \{0,1,...,N-1\}\,,
	\end{align*}
	are the $N$ roots of the Chebyshev polynomial $\cT_N^{[\mu^2,L^2]}$.
\end{defin}

Two remarks. First, despite the present setting (strongly-convex-strongly-concave quadratics) being different from the setting in~\citep[\S3.1]{shugart25} (bilinear objectives with non-negative singular values), the stepsizes in~\cref{def:steps-SCSC} exactly coincide with the ones we proposed in that paper. 
This is because in both problem settings, the Hessian $\bm H$ has the same spectral range.
Second, note that the order of the steps in~\cref{def:steps-SCSC} does not matter due to commutativity of the updates, at least in exact arithmetic implementations; see Appendix A of~\citep{shugart25} for fractal-like orderings that improve numerical stability. 

\begin{theorem}[Upper bound for strongly-convex-strongly-concave min-max problems]\label{thm:SCSC-ub}
	Consider any even integer $T$, any dimensions $d_x, d_y$, any initialization $z_0 = (x_0,y_0)\in \R^{d_x}\times \R^{d_y}$, and any $\mu$-strongly-convex-strongly-concave quadratic min-max problem that is $L$-smooth. Using the slingshot stepsize schedule in \cref{def:steps-SCSC}, GDA converges to the unique saddle point $z^*$ at rate
	\begin{equation}
        \frac{\|z_{T} -z^*\|}{\|z_0 - z^*\|} \leq \frac{2(\kappa + 1)^{T/2} (\kappa - 1)^{T/2}}{(\kappa+1)^{T} + (\kappa-1)^{T}}  = \exp\left( -\left(1+ o_{T,\kappa}(1) \right) \frac{T}{\kappa}\right)\,.
	\end{equation}
\end{theorem}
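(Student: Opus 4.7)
The plan is to reduce the convergence analysis to a minimax polynomial problem on an interval in which Chebyshev polynomials are optimal, and then explicitly evaluate the resulting Chebyshev value at $0$.

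First, I would use the two-step cumulative update derived above the theorem statement. Namely, for $T=2N$, \eqref{eq:matrix_poly_SSS} gives
\begin{equation*}
z_T - z^* \;=\; p_T(\bm H)\,(z_0 - z^*)\,, \qquad
p_T(\lambda) \;=\; \prod_{t=0}^{N-1} (1 - h_t^2 \lambda^2)\,.
\end{equation*}
Plugging in the prescribed $h_t = r_t^{-1/2}$, the univariate polynomial $\lambda \mapsto \prod_{t<N}(1-\lambda^2/r_t)$ in the variable $\lambda^2$ has roots exactly at $\{r_0,\dots,r_{N-1}\}$, which are the $N$ roots of $\cT_N^{[\mu^2,L^2]}$. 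Since $p_T(0)=1$, this forces the identification
\begin{equation*}
p_T(\lambda) \;=\; \frac{\cT_N^{[\mu^2,L^2]}(\lambda^2)}{\cT_N^{[\mu^2,L^2]}(0)}\,.
\end{equation*}

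Second, I would bound $\|z_T-z^*\|/\|z_0-z^*\| \le \|p_T(\bm H)\| = \max_{\lambda \in \sigma(\bm H)} |p_T(\lambda)|$, invoking the symmetry of $\bm H \in \cH_\mu$. By~\cref{lem:spectrum-H}, $\sigma(\bm H) \subseteq [-L,-\mu]\cup[\mu,L]$, so $\lambda^2$ ranges over $[\mu^2,L^2]$, and the defining minimax property of Chebyshev polynomials gives $\max_{s\in[\mu^2,L^2]}|\cT_N^{[\mu^2,L^2]}(s)| = 1$. Therefore
\begin{equation*}
\frac{\|z_T-z^*\|}{\|z_0-z^*\|} \;\le\; \frac{1}{|\cT_N^{[\mu^2,L^2]}(0)|}\,.
\end{equation*}

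Third, I would evaluate $\cT_N^{[\mu^2,L^2]}(0)$ explicitly. Via the standard affine reduction to $[-1,1]$, this equals $\cT_N\!\left(-\tfrac{\kappa^2+1}{\kappa^2-1}\right)$. Using the identity $\cT_N(x) = \tfrac{1}{2}\bigl((x+\sqrt{x^2-1})^N + (x-\sqrt{x^2-1})^N\bigr)$ for $|x|\ge 1$ together with the elementary computation $\sqrt{x^2-1} = \tfrac{2\kappa}{\kappa^2-1}$ and $x\pm\sqrt{x^2-1} = \tfrac{\kappa\pm 1}{\kappa\mp 1}$, one obtains
\begin{equation*}
|\cT_N^{[\mu^2,L^2]}(0)| \;=\; \frac{1}{2}\left[\left(\frac{\kappa+1}{\kappa-1}\right)^{T/2} + \left(\frac{\kappa-1}{\kappa+1}\right)^{T/2}\right]\,,
\end{equation*}
which after clearing denominators yields exactly the claimed bound $\frac{2(\kappa+1)^{T/2}(\kappa-1)^{T/2}}{(\kappa+1)^T+(\kappa-1)^T}$.

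Finally, for the asymptotic expression, I would drop the small positive term $\bigl(\tfrac{\kappa-1}{\kappa+1}\bigr)^{T/2}$ in the denominator (it only helps the bound) and use $\tfrac{\kappa-1}{\kappa+1} = 1 - \tfrac{2}{\kappa+1} = \exp(-\tfrac{2}{\kappa}(1+o_\kappa(1)))$, giving $\exp(-(1+o_{T,\kappa}(1))T/\kappa)$. The only mildly delicate point is verifying that the identification of $p_T$ with a normalized Chebyshev polynomial of $\lambda^2$ is legitimate despite $\sigma(\bm H)$ containing both signs of $\lambda$; this is handled cleanly by the fact that $p_T$ is even in $\lambda$, so only $\lambda^2 \in [\mu^2,L^2]$ matters. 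All other steps are direct applications of classical identities and the lemmas already in the paper.
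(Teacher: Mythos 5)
Your proposal is correct and follows essentially the same route as the paper: write the $T$-step slingshot update as the normalized Chebyshev polynomial $\cT_{T/2}^{[\mu^2,L^2]}(\bm H^2)/\cT_{T/2}^{[\mu^2,L^2]}(0)$, use symmetry of $\bm H$ and \cref{lem:spectrum-H} to bound the sup-norm on $[\mu^2,L^2]$ by $1$, and evaluate at $0$. The only cosmetic difference is that you derive the closed form of $\cT_{T/2}^{[\mu^2,L^2]}(0)$ from the standard identity $\cT_N(x)=\tfrac{1}{2}\bigl((x+\sqrt{x^2-1})^N+(x-\sqrt{x^2-1})^N\bigr)$, whereas the paper cites this value from prior work; your asymptotic step is at the same level of rigor as the paper's.
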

\begin{proof}
	By~\eqref{eq:matrix_poly_SSS} and~\cref{def:steps-SCSC}, we can write the $T$-step update of GDA with the proposed slingshot stepsizes as the following matrix polynomial of $\bm H^2$:
	\begin{equation*}
		z_T-z^* = \prod_{t< T/2} \left(\bm I -h_t^2 \bm H^2\right) (z_{0}-z^*)
		=\prod_{t < T/2} \left(\bm I - \bm H^2/r_t\right) (z_{0}-z^*)
		= \frac{\cT_{T/2}^{[\mu^2,L^2]}(\bm H^2)}{\cT_{T/2}^{[\mu^2,L^2]}(0)}  (z_{0}-z^*)\,.
	\end{equation*}
	The convergence rate then follows:
	\begin{equation*}
		\|z_T-z^*\| \leq \frac{\|\cT_{T/2}^{[\mu^2,L^2]}(\bm H^2)\| }{\cT_{T/2}^{[\mu^2,L^2]}(0)} \,\|z_0-z^*\|\\
		\leq \frac{2(\kappa + 1)^{T/2} (\kappa - 1)^{T/2}}{(\kappa+1)^{T} + (\kappa-1)^{T}}  \,\|z_0-z^*\|.
	\end{equation*}
	Above, the first step is by sub-multiplicativity of the operator norm. The second step uses two classical facts about Chebyshev polynomials (see e.g., Lemma 3.2 of~\citep{shugart25}), namely the closed-form expression for $\cT_{T/2}^{[\mu^2,L^2]}(0) = \tfrac{(\kappa+1)^{T} + (\kappa-1)^{T}}{2(\kappa + 1)^{T/2} (\kappa - 1)^{T/2}}$ and the fact that $\sup_{\lambda \in [\mu^2, L^2]} |\cT_{T/2}^{[\mu^2,L^2]}(\lambda)| = 1$, which is applicable since $\bm H^2$ is diagonalizable with eigenvalues in $[\mu^2,L^2]$ by~\cref{lem:spectrum-H}.     
	\par Finally, a Taylor expansion of the rate gives the desired asymptotics
	$$\frac{2(\kappa + 1)^{T/2} (\kappa - 1)^{T/2}}{(\kappa+1)^{T} + (\kappa-1)^{T}} = \exp\left(- \left( 1+o_{\kappa,T}(1)\right) \frac{T}{\kappa} \right).$$
\end{proof}

\subsection{Convex-concave problems} \label{ssec:CCUB}

For this setting, we can directly invoke the optimal convergence rate proven in our prior work~\citep{shugart25}. For the convenience of the reader, below we recall these optimal stepsizes and the corresponding convergence rate. These appear originally as Definition 3.5 and Theorem 3.7 in~\citep{shugart25}. 

\begin{defin}[Slingshot stepsize schedule for convex-concave min-max problems]\label{def:quadsteps}
	For any even number of iterations $T$ and any $L$-smooth, convex-concave, quadratic min-max problem, the slingshot stepsize schedule is 
	\begin{equation}
		\nonumber
		\alpha_{t} = -\beta_{t} = h_t,\qquad  t \in \{0,1,...,T-1\},
	\end{equation}
	where $\{h_t^{-1}\}_{t=0}^{T-1}$ are any permutation of 
	\begin{align}
		\nonumber
		\rho_t := L \cos \left( \frac{2t+1}{2T+2} \pi \right)\,, \qquad t \in \{0,\dots,T\} \setminus \{T/2\}\,,
	\end{align}
	which are the $T$ non-zero roots of the Chebyshev polynomial $\cT_{T+1}^{[-L,L]}$. Notice that like in~\cref{def:steps-SCSC}, these roots come in positive/negative pairs because 
	\[
	\rho_t = -\rho_{T-t}\,.
	\]
\end{defin}
\begin{theorem}[Upper bound for convex-concave min-max problems]\label{thm:CC-ub}
	Consider any even integer $T$, any dimensions $d_x, d_y$, any initialization $z_0 = (x_0,y_0)\in \R^{d_x}\times \R^{d_y}$, and any quadratic min-max problem that is convex-concave and $L$-smooth. Using the slingshot stepsize schedule in \cref{def:quadsteps}, GDA converges at rate
	\begin{equation}
		\|\nabla f(z_{T})\| \leq \frac{L}{T+1} \|z_0 - z^*\|\,,
	\end{equation}
	where $z^*$ is any saddle point.
\end{theorem}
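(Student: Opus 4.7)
The plan is to follow the same template as the proof of~\cref{thm:SCSC-ub}, but with the target quantity being the gradient norm $\|\nabla f(z_T)\|$ rather than the distance $\|z_T - z^*\|$, and with the associated extremal polynomial pinned down by the roots of $\cT_{T+1}^{[-L,L]}$ instead of $\cT_{T/2}^{[\mu^2,L^2]}$. Concretely, the first step is to use~\eqref{eq:matrix_poly_SSS}, together with the fact that the $T$ nonzero roots $\{\rho_t\}$ of $\cT_{T+1}^{[-L,L]}$ come in $\pm$ pairs (as noted in~\cref{def:quadsteps}), to write
\begin{equation*}
    z_T - z^* \;=\; \prod_{t < T/2}\bigl(\bm I - h_t^2 \bm H^2\bigr)(z_0 - z^*) \;=\; p_T(\bm H)(z_0 - z^*)\,,
\end{equation*}
where $p_T(\lambda) = \prod_{t \neq T/2}(1 - \lambda/\rho_t)$ is the unique polynomial in $\cP_T$ whose roots are the nonzero roots of $\cT_{T+1}^{[-L,L]}$.

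Second, I would rewrite $p_T$ explicitly in terms of $\cT_{T+1}^{[-L,L]}$. Since $\cT_{T+1}^{[-L,L]}$ is odd (because $T+1$ is odd), $0$ is one of its roots, and dividing out that factor gives
\begin{equation*}
    p_T(\lambda) \;=\; \frac{\cT_{T+1}^{[-L,L]}(\lambda)/\lambda}{(\cT_{T+1}^{[-L,L]})'(0)}\,,
\end{equation*}
so that $\lambda\, p_T(\lambda) = \cT_{T+1}^{[-L,L]}(\lambda)/(\cT_{T+1}^{[-L,L]})'(0)$. Using $\nabla f(z_T) = \bm H(z_T - z^*)$, this yields
\begin{equation*}
    \|\nabla f(z_T)\| \;\leq\; \|\bm H\, p_T(\bm H)\| \cdot \|z_0 - z^*\| \;=\; \frac{\sup_{\lambda \in \sigma(\bm H)} |\cT_{T+1}^{[-L,L]}(\lambda)|}{|(\cT_{T+1}^{[-L,L]})'(0)|}\cdot \|z_0 - z^*\|\,,
\end{equation*}
where I used that $\bm H$ is symmetric (so sub-multiplicativity is an equality) and that polynomial evaluation on the spectrum of a diagonalizable matrix reduces to pointwise evaluation.

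Third, I would bound the numerator and compute the denominator. By~\cref{lem:spectrum-H} (with $\mu=0$), $\sigma(\bm H) \subseteq [-L,L]$, so the numerator is at most $\sup_{\lambda\in[-L,L]} |\cT_{T+1}^{[-L,L]}(\lambda)| = 1$ by the standard Chebyshev extremal property. For the denominator, the substitution $\lambda = L\cos\theta$ gives $\cT_{T+1}^{[-L,L]}(\lambda) = \cos((T+1)\theta)$, from which $(\cT_{T+1}^{[-L,L]})'(\lambda) = (T+1)\sin((T+1)\theta)/(L\sin\theta)$; evaluating at $\lambda = 0$ (so $\theta = \pi/2$) and using that $T+1$ is odd yields $|(\cT_{T+1}^{[-L,L]})'(0)| = (T+1)/L$. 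Combining the two gives the claimed rate $\|\nabla f(z_T)\| \le \tfrac{L}{T+1}\|z_0 - z^*\|$.

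The only place that requires any real care is the identification of $p_T$ with the normalized $\cT_{T+1}^{[-L,L]}/\lambda$: one must check that the stepsize schedule in~\cref{def:quadsteps} really does place the roots of $p_T$ precisely at the $T$ nonzero roots of $\cT_{T+1}^{[-L,L]}$ (which relies on $T$ being even so that $\cos\left(\tfrac{2t+1}{2T+2}\pi\right) = 0$ exactly when $t = T/2$, leaving $T$ nonzero roots), and that the $\pm$-pairing of these roots is compatible with the factorization~\eqref{eq:matrix_poly_SSS} coming from the slingshot updates. Everything else is a Chebyshev identity plus the spectral bound of~\cref{lem:spectrum-H}.
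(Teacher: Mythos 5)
Your proof is correct. Note, though, that the paper itself does not prove \cref{thm:CC-ub} at all: it simply imports the statement from Definition 3.5 and Theorem 3.7 of \citep{shugart25}, so your self-contained derivation is necessarily a different route from what appears in this paper, and it is essentially the natural adaptation of the paper's own proof of \cref{thm:SCSC-ub} to the gradient-norm criterion. All the key steps check out: since $T+1$ is odd, $0$ is a simple root of $\cT_{T+1}^{[-L,L]}$ and $t=T/2$ is exactly the index of that root in \cref{def:quadsteps}, so the $T$ stepsizes place the roots of $p_T\in\cP_T$ at the nonzero Chebyshev roots; commutativity of the factors $(\bm I - h_t\bm H)$ makes the permutation (and the pairing used in \eqref{eq:matrix_poly_SSS}) irrelevant in exact arithmetic, as you flag; the identification $\lambda p_T(\lambda)=\cT_{T+1}^{[-L,L]}(\lambda)/(\cT_{T+1}^{[-L,L]})'(0)$ is right because $p_T(0)=1$ forces the normalization $\lim_{\lambda\to 0}\cT_{T+1}^{[-L,L]}(\lambda)/\lambda=(\cT_{T+1}^{[-L,L]})'(0)$; symmetry of $\bm H$ plus \cref{lem:spectrum-H} gives $\|\bm H p_T(\bm H)\|=\max_{\lambda\in\sigma(\bm H)}|\lambda p_T(\lambda)|\le \sup_{[-L,L]}|\cT_{T+1}^{[-L,L]}|/|(\cT_{T+1}^{[-L,L]})'(0)|$; and the trigonometric computation $|(\cT_{T+1}^{[-L,L]})'(0)|=(T+1)/L$ (using $T+1$ odd so $|\sin((T+1)\pi/2)|=1$) yields exactly $L/(T+1)$. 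One small point worth a sentence in a polished write-up: the theorem allows $z^*$ to be \emph{any} saddle point, and since any two saddle points differ by an element of $\mathrm{Null}(\bm H)$ and $p_T(0)=1$, the identity $z_T-z^*=p_T(\bm H)(z_0-z^*)$ and hence your bound hold verbatim for every choice of $z^*$. What your approach buys is a proof of the claimed rate from first principles within this paper's framework; what the paper's citation buys is also the matching optimality discussion (the bound coinciding with the lower bound of \citep{Yoon_Ryu_2021}), which your argument does not address but which is not part of the stated theorem.
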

This convergence rate is exactly optimal---among not just arbitrary stepsize schedules for GDA, but in fact among arbitrary first-order algorithms---as it exactly matches the lower bound in \citep{Yoon_Ryu_2021}.

\section{Adaptive algorithms}\label{sec:adaptive}

In this section we extend the lower bounds for symmetric algorithms in \cref{sec:LB} to allow for the algorithms to be \emph{adaptive}. Conceptually, this amounts to constructing a \emph{single} problem instance for which no (adaptive) algorithm can perform well---in contrast to the arguments in~\cref{sec:LB} which construct a potentially different hard problem for each (non-adaptive) algorithm. 

\par These results extend for both the strongly-convex-strongly-concave and convex-concave settings. For the convenience of the reader, we state both results below in their entirety; the only difference from~\cref{thm:SCSCLB,thm:CCLB}, respectively, is that the results here allow the algorithms to be adaptive. 

\begin{theorem}[Lower bound for adaptive symmetric algorithms on SCSC problems]\label{thm:SCSCLB_adapt}
    There exists a $\kappa$-conditioned quadratic min-max problem~\eqref{eq:mm_quad} such that for any (possibly adaptive) symmetric Krylov-subspace algorithm and any number of iterations $T$, the convergence rate from some initialization point $z_0$ is no faster than
    \[
        \|z_T-z^*\| \geq    \exp\Bigg( -\Bigg(\frac{4}{3\sqrt{3}} +o_{\kappa}(1)\Bigg) \frac{T}{\kappa} \Bigg) \|z_0 - z^*\| \,.
    \]
\end{theorem}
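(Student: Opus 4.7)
The plan is to upgrade the argument of \cref{thm:SCSCLB} so that a \emph{single} instance $(\bm H, z^*)$ obstructs every adaptive symmetric algorithm simultaneously. By \cref{lem:sym-algs}, the iterate of any symmetric Krylov-subspace algorithm (adaptive or not) still satisfies $z_T - z^* = p_T(\bm J \bm H)(z_0 - z^*)$ for some $p_T \in \cP_T$; adaptivity only affects \emph{which} polynomial appears. It therefore suffices to exhibit one universal $(\bm H, z^*)$ and, for each $T$, an initialization $z_0$ for which
\[
\frac{\|p(\bm J \bm H)(z_0 - z^*)\|}{\|z_0-z^*\|} \;\geq\; \Bigl|\Phi_\Omega\bigl(\tfrac{-1}{\kappa-1}\bigr)\Bigr|^{-T} \quad \text{uniformly over } p \in \cP_T.
\]
The asymptotic computation at the end of the proof of \cref{thm:SCSCLB} then converts this into the stated bound.

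The hard instance is a block-diagonal operator whose spectrum is dense in the half-disc $\Omega_\mu \subseteq \sigma(\cJ_\mu)$ used in that proof. For any $\lambda = \alpha + i\beta$ with $\alpha \geq \mu$ and $\alpha^2 + \beta^2 \leq L^2$, the $2\times 2$ block
\[
\bm H_\lambda \;=\; \begin{pmatrix} \alpha & \beta \\ \beta & -\alpha \end{pmatrix}
\]
lies in $\cH_\mu$, and a direct calculation shows that $\bm J \bm H_\lambda = \alpha \bm I + \beta\bigl(\begin{smallmatrix}0 & 1 \\ -1 & 0\end{smallmatrix}\bigr)$ acts on $\R^2$ as multiplication by $\bar\lambda$ under the identification $\R^2\cong\C$; consequently $\|p(\bm J \bm H_\lambda)\| = |p(\lambda)|$ for every real polynomial $p$. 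For each $T$, choose a net $\Lambda_T \subset \Omega_\mu$ of spacing $\delta_T = \poly(1/T)$, set $\bm H_T = \bigoplus_{\lambda \in \Lambda_T} \bm H_\lambda$, and assemble them into one universal instance $\bm H = \bigoplus_{T\geq 1} \bm H_T$ with $z^*=0$. For each $T$ take $z_0$ supported entirely in the $\bm H_T$-block with equal norm in every $\lambda$-eigenplane. Block-diagonality of $\bm H$ keeps the whole Krylov trajectory inside this invariant subspace, and the action above yields
\[
\frac{\|p(\bm J \bm H) z_0\|^2}{\|z_0\|^2} \;=\; \frac{1}{|\Lambda_T|} \sum_{\lambda \in \Lambda_T} |p(\lambda)|^2 \;\geq\; \frac{1}{|\Lambda_T|} \max_{\lambda \in \Lambda_T} |p(\lambda)|^2.
\]

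To pass from the discrete maximum back to the continuous sup-norm on $\Omega_\mu$, a Markov--Bernstein inequality on the half-disc gives $\|p'\|_{\Omega_\mu} \leq O(T^2)\,\|p\|_{\Omega_\mu}$ for $p$ of degree $\leq T$, so choosing $\delta_T$ of order $1/T^3$ yields $\max_{\lambda \in \Lambda_T} |p(\lambda)| \geq (1 - o_T(1))\,\|p\|_{\Omega_\mu}$. Combining this with the Bernstein--Walsh lower bound $\|p\|_{\Omega_\mu} \geq \exp(-T g_{\Omega_\mu}(0))$ from the proof of \cref{thm:SCSCLB}, and absorbing the polynomial factor $|\Lambda_T|^{-1/2}$ into the $o_\kappa(1)$ correction of the Green's function asymptotic, recovers exactly the claimed rate.

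The main obstacle is striking the right balance in the discretization step: the net $\Lambda_T$ must be fine enough (relative to $T$) that polynomials of degree $\leq T$ cannot slip between grid points, yet sparse enough that the normalization cost $|\Lambda_T|^{-1/2}$ remains subexponential in $T$. Polynomial-size nets comfortably satisfy both, but some care is needed in invoking the appropriate Markov-type inequality on $\Omega_\mu$, which has both flat and curved boundary pieces meeting at corners. A cleaner but analytically heavier alternative is to replace the direct sum by an infinite-dimensional model where $\bm J \bm H$ acts as multiplication by $\lambda$ on $L^2$ of the equilibrium measure on $\Omega_\mu$; this removes the discretization step entirely at the cost of some functional-analytic bookkeeping for normal operators.
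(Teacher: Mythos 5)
Your construction of the hard instance---$2\times 2$ blocks $\bm H_\lambda \in \cH_\mu$ with $\bm J \bm H_\lambda$ acting as multiplication by $\bar\lambda$, initialization weights spread over the eigenplanes, and the observation that adaptivity only changes \emph{which} $p_T \in \cP_T$ appears in $z_T - z^* = p_T(\bm J\bm H)(z_0-z^*)$---is exactly the paper's Step 2 (\cref{lem:adaptive2}). The divergence, and the genuine gap, is in how the weights on the eigenplanes are chosen. With \emph{equal} weights on a net $\Lambda_T$ you can only bound the average by $\frac{1}{|\Lambda_T|}\max_{\lambda\in\Lambda_T}|p(\lambda)|^2$, and with $|\Lambda_T| = \poly(T)$ (of order $T^6$ for spacing $T^{-3}$ in two dimensions) you lose a factor $|\Lambda_T|^{-1/2} = \Theta(T^{-3})$. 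Your claim that this factor can be ``absorbed into the $o_\kappa(1)$ correction'' is where the argument fails: absorbing $T^{-3} = \exp(-3\ln T)$ into $\exp(-\epsilon(\kappa)\,T/\kappa)$ with $\epsilon(\kappa)\to 0$ requires $3\ln T \le \epsilon(\kappa)\,T/\kappa$, which is false uniformly in $T$ and, in particular, false in the regime that matters for the paper's separation, $T = \Theta(\kappa\log\tfrac1\epsilon)$: there the stated bound is a constant (or mildly small) quantity $\exp(-\Theta(T/\kappa))$ while your extra factor is $\kappa^{-\Theta(1)}$, so your lower bound drops below the asymmetric upper bound $\exp(-T/\kappa)$ of \cref{thm:SCSC-ub} and the constant-factor separation---the entire point of the theorem---is lost. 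The loss is intrinsic to the uniform measure: for a generic measure, $\min_{p\in\cP_T}\E_{\lambda\sim\nu}|p(\lambda)|^2$ really can be polynomially smaller than $\min_{p\in\cP_T}\max_{\lambda}|p(\lambda)|^2$, so no sharper juggling of the net spacing fixes this.

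The missing idea is to let duality pick the measure for you. The paper proves (\cref{lem:adaptive1}) via a finite mesh, a real-coefficients symmetrization, and Sion's minimax theorem that there is a \emph{finitely supported, conjugation-invariant} $\nu$ with $\min_{p\in\cP_T}\E_{\lambda\sim\nu}|p(\lambda)|^2 \ge (1-\varepsilon)^2\min_{p\in\cP_T}\max_{\lambda\in\sigma(\cJ_\mu)}|p(\lambda)|^2$, i.e.\ no polynomial loss at all; the initialization weights are then taken to be $\sqrt{\nu(\lambda_j)}$ rather than uniform, giving $\|z_T-z^*\|^2 = \E_{\lambda\sim\nu}|p_T(\lambda)|^2$ exactly. (Note the paper's mesh must be exponentially fine in $T$, which is harmless there precisely because the weights, not the cardinality, carry the bound.) Your closing alternative---multiplication by $\lambda$ on $L^2$ of the equilibrium measure---has the same defect unless you additionally prove that the $L^2$ and sup-norm extremal problems agree up to $1+o(1)$ factors, which is essentially the duality statement you are trying to avoid. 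Two smaller points: your uniform-in-$p$ chain implicitly uses that $p_T$ has real coefficients (true for real-arithmetic symmetric algorithms, and handled explicitly in the paper by \cref{lem:helper-real-max,lem:helper-real-avg}), and your direct sum over all $T$ produces an infinite-dimensional instance, whereas a per-$T$ instance (as the paper effectively constructs) stays within the finite-dimensional setting of~\eqref{eq:mm_quad}.
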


\begin{theorem}[Lower bound for adaptive symmetric algorithms on convex-concave problems]\label{thm:CCLB_adapt}
    There exists an $L$-smooth quadratic min-max problem~\eqref{eq:mm_quad} such that for any (possibly adaptive) symmetric Krylov-subspace algorithm and any number of iterations $T$, the convergence rate from some initialization point $z_0$ is no faster than
    $$\|\nabla f(z_T)\|\geq \left(\frac{3\sqrt{3}}{2}+o_T(1) \right)\frac{L\|z_0-z^*\|}{T}.$$
\end{theorem}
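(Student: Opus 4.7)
I will construct a single block-diagonal instance whose matrix $\bm{J}\bm{H}$ is \emph{normal} and has spectrum densely filling $\sigma(\mathcal{J}_0) = \Omega_L$. Once this instance is fixed, \cref{lem:sym-algs} forces every adaptive symmetric Krylov-subspace algorithm to produce iterates of the form $z_T - z^* = p_T(\bm{J}\bm{H})(z_0 - z^*)$ for some $p_T \in \mathcal{P}_T$ whose coefficients may depend on the instance; a weak-duality (minimax) step then reduces the adaptive lower bound to the same extremal polynomial problem treated in the proof of \cref{thm:CCLB}, namely $\min_{p \in \mathcal{P}_T}\max_{\lambda \in \Omega_L} |\lambda p(\lambda)|$. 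The Bernstein-type inequality of \cref{lem:prescribedroots} then yields the asserted $(3\sqrt{3}/2 + o_T(1))\,L/T$ bound.

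\textbf{The instance.} The inductive proof of \cref{lem:sym-algs} goes through verbatim when the coefficients $c_{t,k}$ are permitted to depend on observed gradients, so the polynomial reduction is free in the adaptive setting. For the construction, fix a $1/N$-net $\Lambda_N = \{\lambda_k = a_k + i b_k\}$ of $\Omega_L$ with $a_k \geq 0$ (ultimately take $N = \infty$, i.e., a countable direct sum, so that one instance works for every $T$). For each $\lambda_k$, use the $2\times 2$ building block
\[
\bm{H}_k = \begin{pmatrix} a_k & b_k \\ b_k & -a_k \end{pmatrix},
\]
and set $\bm H = \mathrm{diag}(\bm H_1, \bm H_2, \ldots)$, after reordering rows/columns to match the $(x,y)$ block form of~\eqref{eq:mm_quad}. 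One directly checks $\|\bm H_k\| = |\lambda_k| \leq L$ and $a_k \geq 0$, so $\bm H \in \mathcal{H}_0$. Crucially, $\bm{J}\bm{H}_k = a_k \bm I + b_k \bigl(\begin{smallmatrix} 0 & 1 \\ -1 & 0\end{smallmatrix}\bigr)$ is normal (it commutes with its transpose, as both products equal $|\lambda_k|^2 \bm I$), with eigenvalues $\lambda_k, \overline{\lambda_k}$; consequently $\bm{J}\bm{H}$ is unitarily diagonalizable over $\C$ with spectrum $\Lambda_N \cup \overline{\Lambda_N}$.

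\textbf{Reduction to the extremal polynomial problem.} For this fixed $\bm H$, a weak-duality argument over initializations yields
\[
\sup_{z_0} \frac{\|\nabla f(z_T)\|}{\|z_0 - z^*\|} \ \geq\ \min_{p \in \mathcal{P}_T} \|\bm H\,p(\bm{J}\bm{H})\|_{\mathrm{op}} \ =\ \min_{p \in \mathcal{P}_T} \|\bm{J}\bm{H}\,p(\bm{J}\bm{H})\|_{\mathrm{op}} \ =\ \min_{p \in \mathcal{P}_T} \max_{\lambda \in \Lambda_N} |\lambda\,p(\lambda)|,
\]
where the first equality uses $\|\bm J\| = 1$ and the second uses normality of $\bm{J}\bm{H}$. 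As $N \to \infty$ and $\Lambda_N$ becomes dense in $\Omega_L$, the admissible polynomials of bounded value on $\Omega_L$ are equicontinuous there (their degree is at most $T+1$ and standard polynomial derivative bounds apply), so the right-hand side converges to $\min_{p \in \mathcal{P}_T}\max_{\lambda \in \Omega_L} |\lambda\, p(\lambda)|$. This is precisely the quantity lower-bounded in the proof of \cref{thm:CCLB} via \cref{lem:prescribedroots} applied at $\lambda_0 = 0$ with $K = \Omega_L$, giving the claimed bound.

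\textbf{Main obstacle.} The principal subtlety is that generic elements of $\mathcal{J}_0$ are \emph{not} normal, so $\|p(\bm{J}\bm{H})\|_{\mathrm{op}}$ need not equal the maximum of $|p(\cdot)|$ on the spectrum; the special $2 \times 2$ blocks above bypass this by being scalar-plus-rotation and hence normal. A secondary subtlety is the reversed quantifier order (instance fixed before the algorithm), which is resolved by the weak-duality inequality above, converting the supremum over $z_0$ into an instance-only operator-norm quantity that the adaptive coefficient-choices cannot improve. An entirely analogous construction---replacing $\Omega_L$ by the shifted half-disc $\Omega_\mu$ from the proof of \cref{thm:SCSCLB}---proves the strongly-convex-strongly-concave variant (\cref{thm:SCSCLB_adapt}).
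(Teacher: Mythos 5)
Your construction of the hard matrix is essentially the paper's: the $2\times 2$ scalar-plus-rotation blocks are exactly the building blocks used in \cref{lem:adaptive2}, and exploiting their normality to pass from $\|\bm J\bm H\,p(\bm J\bm H)\|$ to $\max_{\lambda\in\sigma(\bm J\bm H)}|\lambda p(\lambda)|$ is also how the paper proceeds. The genuine gap is the step you label ``weak-duality argument over initializations.'' For a \emph{fixed} instance, an adaptive algorithm may produce a different polynomial $p_T^{(z_0)}$ for each initialization, so the only inequality that comes for free is
\begin{equation*}
\sup_{z_0}\frac{\|\nabla f(z_T)\|}{\|z_0-z^*\|}\;\ge\;\sup_{z_0}\,\min_{p\in\mathcal{P}_T}\frac{\|\bm J\bm H\,p(\bm J\bm H)(z_0-z^*)\|}{\|z_0-z^*\|}\,,
\end{equation*}
whereas the quantity you write on the right-hand side of your display is $\min_{p}\sup_{z_0}(\cdots)=\min_p\|\bm J\bm H\,p(\bm J\bm H)\|_{\mathrm{op}}$. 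Weak duality says $\sup\min\le\min\sup$, i.e., it points in the \emph{opposite} direction from what you need; it cannot ``resolve the reversed quantifier order.'' Closing this gap is precisely the hard (and novel) part of the paper's argument: one must exhibit a single initialization against which \emph{every} admissible polynomial is bad. The paper does this by lifting the max over eigenvalues to a max over probability measures on a finite mesh of the spectrum (\cref{lem:finite}), invoking Sion's minimax theorem (strong duality) after restricting to real-coefficient polynomials and symmetrizing under conjugation (\cref{lem:helper-real-max,lem:helper-real-avg,lem:adaptive1}), and then encoding the optimal dual measure $\nu$ into the initialization via eigencomponent weights $\langle u_j,z_0-z^*\rangle=\sqrt{\nu(\lambda_j)}$ (\cref{lem:adaptive2}), so that $\|z_T-z^*\|^2=\E_{\lambda\sim\nu}|p_T(\lambda)|^2$ for whatever polynomial the adaptive algorithm ends up producing. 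None of this machinery appears in your proposal, and without it the displayed inequality is unproven.

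Two secondary points. First, making the spectrum a dense net of the whole half-disc does not by itself remove the difficulty: even with dense spectrum, the hard initialization must be tailored to $T$ through the dual-optimal measure, which is allowed by the theorem statement (the initialization may depend on $T$) but again requires the duality construction. Second, taking $N=\infty$ as a countable direct sum leaves the finite-dimensional setting of~\eqref{eq:mm_quad}; the paper instead keeps everything finite-dimensional by working with a finitely supported $\nu$, paying only a $(1-\varepsilon)$ factor via the mesh lemma. Finally, note that for \cref{thm:CCLB_adapt} the relevant functional is $|\lambda p(\lambda)|^2$ (gradient norm) rather than $|p(\lambda)|^2$, so the Step-1 duality must be carried out for that functional before applying \cref{lem:prescribedroots}, exactly as in the non-adaptive proof of \cref{thm:CCLB}.
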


\subsection{Proof of~\cref{thm:SCSCLB_adapt}}\label{ssec:adaptive-sketch}

The proofs of~\cref{thm:SCSCLB_adapt,thm:CCLB_adapt} follow from nearly identical extensions of~\cref{thm:SCSCLB,thm:CCLB}, respectively. Therefore for brevity we only detail the former, i.e., how to prove~\cref{thm:SCSCLB_adapt} from~\cref{thm:SCSCLB}.  For conceptual clarity, we first outline the argument. For shorthand, throughout this section we denote $\sigma(\cJ_{\mu})$ simply by $\sigma$, since here there is no possibility of confusion with $\sigma(\cH_{\mu})$. Recall from~\cref{lem:spectrum-JH}
that $\sigma = \{z: |z|\leq L, \mathrm{Re}(z)\geq \mu\}$.

\paragraph*{Conceptual overview: hard problem instances for adaptive algorithms, via duality of the extremal polynomial problem.} Recall from \cref{sec:LB} that our lower bounds begin with a reformulation of the optimal convergence rate in terms of an extremal polynomial problem $\min_{p \in \cP_T} \max_{\lambda \in \sigma} |p(\lambda)|$, where the degree-$T$ polynomial $p$ encodes the $T$-iteration evolution of the algorithm, and the eigenvalue $\lambda$ of $\bm J \bm H$ encodes a worst-case problem instance for that algorithm. 
See~\eqref{eq:SCSCLB-poly}.
Importantly, as we considered only non-adaptive algorithms there, the polynomial $p_T$ did not depend on the matrix $\bm J \bm H$, so we could select a hard problem $\lambda$ based on the algorithm $p_T$ (hence the order of the $\min$ and $\max$ in $\min_p \max_{\lambda} |p(\lambda)|$).  
\par For adaptive algorithms, however, the polynomial $p_T$ may depend on the matrix $\bm J \bm H$, so we identify a single problem instance which is hard for all algorithms. In terms of the extremal polynomial problem, this requires a dual form where the order of the min and max are swapped. In particular, one may hope to prove a duality statement of the form 
 \begin{equation}\label{eq:maxmin}
 \min_{p\in \mathcal{P}_T} \max_{\lambda \in \sigma} |p(\lambda)|^2 = \max_{\nu \in \mathcal{M}(\sigma) } \min_{p\in \mathcal{P}_T} \E_{\lambda \sim \nu} |p(\lambda)|^2\,.
 \end{equation}
where $\mathcal{M}(\sigma)$ is the set of probability measures supported on $\sigma$. Notice that the maximum over eigenvalues $\lambda
$ is lifted to a maximum over \emph{probability distributions} on eigenvalues $\nu
$; in game-theoretic terminology this corresponds to relaxing pure strategies to mixed strategies. 

\par At a conceptual level, our overall proof strategy amounts to proving a duality statement of the form~\eqref{eq:maxmin}, showing the existence of a solution $\nu$, and then using $\nu$ to construct a hard problem instance for which the optimal convergence rate of any (possibly adaptive) symmetric algorithm is given by $\min_{p\in \mathcal{P}_T} \E_{\lambda \sim \nu} |p(\lambda)|^2$. This style of argument is inspired by Nemirovsky's classical lower bounds for solving symmetric linear systems using Krylov-subspace algorithms~\citep{nemirovsky1991optimality, nemirovsky1992information}, at least at a high level. However, implementing this proof strategy leads to additional technical considerations for min-max problems than in the simpler setting of quadratic minimization studied by Nemirovsky. Below we detail our two key steps to overcome these hurdles.

\paragraph*{Step 1: Nearly-optimal distributions with finite support and conjugation invariance.} The method we use to construct a hard problem from $\nu$ is not amenable to arbitrary probability distributions $\nu$. We therefore show that we can impose two constraints on $\nu$ that on one hand make our construction possible, and on the other hand only affect the final convergence rate by an arbitrarily small amount. The first condition we impose on $\nu$ is finite support; later this will enable us to construct a hard problem instance in finite dimension. The second condition we impose on $\nu$ is invariance under conjugation $\nu(\lambda)= \nu(\bar \lambda)$; later this will enable us to construct a hard problem whose operator $\bm J \bm H$ and solution $z^*$ have all real entries. 
\begin{lemma}[Step 1 in proof of~\cref{thm:SCSCLB_adapt}]\label{lem:adaptive1}
For any number of iterations $T$ and any error $\varepsilon>0$, there exists a probability distribution $\nu \in \cM(\sigma)$ satisfying the following:
\begin{enumerate}
\item[(i)] $\nu$ is finitely supported.
\item[(ii)] $\nu$ is invariant under conjugation.
\item[(iii)] $\left(1-\varepsilon \right)^2\min_{p\in \mathcal{P}_T} \max_{\lambda \in \sigma} |p(\lambda)|^2 \leq\min_{p\in \mathcal{P}_T} 
\E_{\lambda \sim \nu} |p(\lambda)|^2$.
\end{enumerate}
\end{lemma}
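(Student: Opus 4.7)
My plan is to prove the lemma in three conceptual steps, formalizing the ``duality of the extremal polynomial problem'' outlined in the preceding conceptual overview. The first step is to establish the minimax identity
\[
\min_{p \in \mathcal{P}_T} \max_{\lambda \in \sigma} |p(\lambda)|^2 \;=\; \max_{\nu \in \mathcal{M}(\sigma)} \min_{p \in \mathcal{P}_T} \E_{\lambda \sim \nu} |p(\lambda)|^2
\]
with the right-hand maximum attained at some $\nu^{\star}$. This follows from Sion's minimax theorem: $\mathcal{M}(\sigma)$ is compact convex in the weak-$\ast$ topology (since $\sigma$ is compact), $\mathcal{P}_T$ is a convex finite-dimensional affine space, and $(\nu, p) \mapsto \E_\nu |p(\lambda)|^2$ is linear in $\nu$ and convex quadratic in $p$; the identity $\max_\nu \E_\nu |p(\lambda)|^2 = \max_\lambda |p(\lambda)|^2$ (attained by a point mass) converts the left-hand side into the appropriate minimax form. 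Attainment follows because $\nu \mapsto \min_p \E_\nu |p(\lambda)|^2$ is upper semicontinuous (as an infimum of continuous linear functionals) and $\mathcal{M}(\sigma)$ is compact.

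Next, I would enforce the two structural constraints on $\nu$. For \emph{conjugation invariance}, note that $\sigma$ is invariant under $\lambda \mapsto \bar\lambda$ and each $p \in \mathcal{P}_T$ has real coefficients (so $|p(\bar\lambda)| = |p(\lambda)|$); hence the pushforward $\overline{\nu^{\star}}$ also attains the max, and by concavity of $\nu \mapsto \min_p \E_\nu |p(\lambda)|^2$, so does the symmetrization $\widetilde{\nu} := \tfrac{1}{2}(\nu^{\star} + \overline{\nu^{\star}})$, which is conjugation invariant. For \emph{finite support}, the key observation is that $\E_\nu |p(\lambda)|^2 = \sum_{0 \le k,\ell \le T} c_k c_\ell \, m_{k,\ell}(\nu)$ for $p(\lambda) = \sum_k c_k \lambda^k$ depends on $\nu$ only through the moments $m_{k,\ell}(\nu) := \E_\nu \lambda^k \bar\lambda^\ell$. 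Tchakaloff's theorem then yields a finitely supported $\nu' \in \mathcal{M}(\sigma)$ matching all these moments of $\widetilde{\nu}$; setting $\nu := \tfrac{1}{2}(\nu' + \overline{\nu'})$ preserves finite support, enforces conjugation invariance, and (because $\widetilde{\nu}$ is already conjugation invariant) does not alter the relevant moments. Thus $\min_p \E_\nu |p(\lambda)|^2 = \min_p \max_\lambda |p(\lambda)|^2$, which proves (iii) with $\varepsilon = 0$ and trivially implies the stated $(1-\varepsilon)^2$ bound for any $\varepsilon > 0$.

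The main obstacle is achieving finite support \emph{and} conjugation invariance simultaneously without degrading the value of the functional. The moment-based route above handles this cleanly because the functional $\min_p \E_\nu |p(\lambda)|^2$ ``sees'' $\nu$ only through finitely many real parameters, so a single application of Tchakaloff's theorem followed by one symmetrization suffices. If one preferred to avoid Tchakaloff's theorem, the alternative is to discretize $\sigma$ on a fine grid and argue via compactness on $\mathcal{P}_T$ that the maximum on $\sigma$ is well-approximated by the maximum on the grid; this approximation route is where the $(1-\varepsilon)^2$ slack present in the stated inequality would naturally enter.
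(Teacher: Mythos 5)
Your route is correct in substance but genuinely different from the paper's. You prove the exact duality $\min_{p\in\cP_T}\max_{\lambda\in\sigma}|p(\lambda)|^2=\max_{\nu\in\cM(\sigma)}\min_{p\in\cP_T}\E_{\lambda\sim\nu}|p(\lambda)|^2$ directly on the continuum via Sion's theorem (weak-$\ast$ compactness of $\cM(\sigma)$ plus upper semicontinuity for attainment), symmetrize the optimizer under conjugation using concavity of $\nu\mapsto\min_p\E_\nu|p|^2$, and then invoke Tchakaloff's theorem (in the Bayer--Teichmann form, with atoms in the support and the index pair $k=\ell=0$ included so total mass is preserved) to replace the optimizer by a finitely supported measure with the same moments $m_{k,\ell}=\E_\nu\lambda^k\bar\lambda^\ell$, $0\le k,\ell\le T$, hence the same value of $\E_\nu|p(\lambda)|^2$ for every $p$; a final symmetrization costs nothing because the moment matrix of a conjugation-invariant measure satisfies $m_{k,\ell}=m_{\ell,k}$. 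This even yields the lemma with $\varepsilon=0$. The paper instead avoids infinite-dimensional Sion and Tchakaloff entirely: it first builds a finite $\delta$-net $S\subset\partial\sigma$ (via a Cauchy estimate and the Bernstein--Walsh bound, \cref{lem:finite}) so that $\|p\|_S\ge(1-\varepsilon)\|p\|_\sigma$ for all degree-$T$ polynomials---this is exactly where the $(1-\varepsilon)^2$ slack enters---and then applies Sion only over the finite-dimensional simplex $\cM(S)$, handling conjugation through two elementary helper lemmas. Your approach buys a sharper, $\varepsilon$-free statement at the price of heavier machinery; the paper's buys a more self-contained argument at the price of the harmless $(1-\varepsilon)^2$ factor, which is all that \cref{thm:SCSCLB_adapt} needs.

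One slip to repair: you justify conjugation-invariance by asserting that every $p\in\cP_T$ has real coefficients, so that $|p(\bar\lambda)|=|p(\lambda)|$. As defined in the paper, $\cP_T$ contains complex-coefficient polynomials and this identity fails for them; this is precisely the subtlety that \cref{lem:helper-real-max,lem:helper-real-avg} are designed to address. Your argument survives with a small fix: setting $\tilde p(\lambda):=\overline{p(\bar\lambda)}$, the map $p\mapsto\tilde p$ is a bijection of $\cP_T$ with $\E_{\bar\nu}|p(\lambda)|^2=\E_{\nu}|\tilde p(\lambda)|^2$, so the pushforward $\overline{\nu^\star}$ attains the same value and your concavity step goes through; likewise the moment expansion should read $\sum_{k,\ell}c_k\bar c_\ell\, m_{k,\ell}$, which still depends on $\nu$ only through the finitely many moments, so the Tchakaloff step is unaffected.
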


We prove this lemma in~\cref{ssec:adaptproof1}; here we focus on how we use it to prove~\cref{thm:SCSCLB_adapt}.

\paragraph{Step 2: Constructing a hard problem instance from $\nu$.} 
The second step of our argument uses $\nu$ to construct a ``hard'' quadratic min-max problem, as formally stated next.

\begin{lemma}[Step 2 in proof of~\cref{thm:SCSCLB_adapt}]\label{lem:adaptive2}
Suppose there exists a probability distribution $\nu \in \cM(\sigma)$ satisfying the following:
\begin{enumerate}
\item[(i)] $\nu$ is finitely supported.
\item[(ii)] $\nu$ is invariant under conjugation.
\item[(iii)] $R \leq\min_{p\in \mathcal{P}_T} 
\E_{\lambda \sim \nu} |p(\lambda)|^2$.
\end{enumerate}
Then there exists a $\kappa$-conditioned quadratic min-max problem~\eqref{eq:mm_quad} such that for any (possibly adaptive) symmetric Krylov-subspace algorithm and any number of iterations $T$, the convergence rate from some initialization point $z_0$ is no faster than
$\|z_T - z^*\| \geq \sqrt{R} \|z_0-z^*\|$.
\end{lemma}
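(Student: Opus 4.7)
The plan is to use $\nu$ to construct a single block-diagonal quadratic min-max instance whose operator $\bm{J}\bm{H}$ has spectrum concentrated precisely on $\mathrm{supp}(\nu)$. By~\cref{lem:sym-algs}, any (possibly adaptive) symmetric Krylov-subspace algorithm satisfies $z_T - z^* = p_T(\bm{J}\bm{H})(z_0 - z^*)$ for some $p_T \in \cP_T$, so it suffices to arrange that $\|p(\bm{J}\bm{H})(z_0-z^*)\|^2 = \E_{\lambda\sim\nu}|p(\lambda)|^2 \cdot \|z_0-z^*\|^2$ for every $p \in \cP_T$ and for an appropriately chosen initialization $z_0$. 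Hypothesis (iii) then immediately gives $\|z_T-z^*\|^2 \geq R\|z_0-z^*\|^2$ regardless of which polynomial the adaptive algorithm ends up producing.

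For each conjugate-pair representative $\lambda = a+bi \in \mathrm{supp}(\nu)$ (automatically satisfying $a \geq \mu$ and $a^2+b^2 \leq L^2$ since $\lambda \in \sigma$), I would take the $2 \times 2$ real block
\[
\bm{H}^{(\lambda)} = \begin{bmatrix} a & b \\ b & -a\end{bmatrix}, \qquad \text{so that} \qquad \bm{J}\bm{H}^{(\lambda)} = \begin{bmatrix} a & b \\ -b & a\end{bmatrix}
\]
is a normal rotation-scaling with eigenvalues $\lambda$ and $\bar\lambda$. Assumption (i) (finite support) makes the resulting ambient dimension finite, while assumption (ii) (conjugation invariance) is what allows every block to be real. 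After permuting coordinates, the direct sum of these blocks has the canonical form of~\eqref{eq:mm_quad} with $\bm A = \bm C = \mathrm{diag}(\{a_\lambda\})$ and $\bm B = \mathrm{diag}(\{b_\lambda\})$; the $\kappa$-conditioning constraints $\bm A, \bm C \succeq \mu \bm I$ and $\|\bm H\|\leq L$ hold block-by-block because $a_\lambda \geq \mu$ and $\|\bm{H}^{(\lambda)}\| = \sqrt{a_\lambda^2 + b_\lambda^2} \leq L$. Finally, set $z^* = 0$ and choose $z_0$ so that its block component has squared norm $\nu(\lambda) + \nu(\bar\lambda)$ for non-real $\lambda$ (and $\nu(\lambda)$ for real $\lambda$), normalized so that $\|z_0\|^2 = 1$.

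The key computation is that $\bm{J}\bm{H}^{(\lambda)}$ acts on $\R^2 \cong \C$ as multiplication by $\bar\lambda$, so for any real-coefficient polynomial $p$ the operator $p(\bm{J}\bm{H}^{(\lambda)})$ is itself a rotation-scaling of modulus $|p(\lambda)|=|p(\bar\lambda)|$. In particular $\|p(\bm{J}\bm{H}^{(\lambda)}) v\| = |p(\lambda)|\|v\|$ for every $v$ in the block, and summing the block contributions yields
\[
\|z_T - z^*\|^2 \;=\; \sum_{\lambda}|p_T(\lambda)|^2 \,\|z_0^{(\lambda)}\|^2 \;=\; \E_{\lambda\sim\nu}|p_T(\lambda)|^2 \cdot \|z_0 - z^*\|^2,
\]
where the final equality folds the block weights (using $|p_T(\lambda)|=|p_T(\bar\lambda)|$) back into the $\nu$-expectation. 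Combining with (iii) produces the claimed lower bound $\|z_T-z^*\| \geq \sqrt{R}\|z_0-z^*\|$.

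The main obstacle is not conceptual but careful bookkeeping: verifying that the real-valued block construction assembles (after the coordinate permutation) into a legitimate instance of~\eqref{eq:mm_quad}, and handling real eigenvalues versus complex conjugate pairs uniformly when collecting block weights into the $\nu$-expectation. Once~\cref{lem:adaptive1} has already delivered a hard distribution $\nu$ with finite support and conjugation symmetry, essentially all the remaining work is this explicit construction and bookkeeping.
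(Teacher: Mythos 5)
Your proposal is correct and follows essentially the same route as the paper: the same $2\times 2$ real rotation-scaling blocks built from conjugate-pair representatives of $\supp(\nu)$ (assembled, after a coordinate permutation, into $\bm A = \bm C$ diagonal and $\bm B$ diagonal), the same $\nu$-weighted choice of $z_0 - z^*$, and the same reduction via \cref{lem:sym-algs} to $\E_{\lambda\sim\nu}|p_T(\lambda)|^2 \geq R$. The only differences are cosmetic: you verify the $\kappa$-conditioning explicitly and compute block norms via the identification of each block with multiplication by $\bar\lambda$ on $\C$, whereas the paper expands in the orthonormal complex eigenbasis of the normal matrix $\bm J\bm H$.
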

\begin{proof}
    Let $S$ denote the support of $\nu$; this is finite by property (i). Define $S^+ = \{ \lambda \in S : \Im(\lambda) \geq 0\}$. We construct the $\kappa$-conditioned quadratic min-max problem~\eqref{eq:mm_quad}. Define the blocks of the quadratic objective $\bm H$ as $\bm A = \bm C = \mathrm{diag}( \{ \Re(\lambda_j) : \lambda_j \in S^{+}\} )$ and $\bm B = \mathrm{diag}( \{ \Im(\lambda_j) : \lambda_j \in S^{+}\}$. 
Then, modulo a permutation of rows and columns, the matrix
\begin{align*}
    \bm J \bm H = \begin{bmatrix}
        \bm A & \bm B \\
        -\bm {B^\top} & \bm{C}
    \end{bmatrix}\,,
\end{align*}
is block-diagonal with $2 \times 2$ blocks of the form
$$\begin{bmatrix} \text{Re}(\lambda_j) & \text{Im}(\lambda_j) \\ -\text{Im}(\lambda_j) & \text{Re}(\lambda_j) \end{bmatrix}\,.$$
for every $\lambda_j \in S^+$. The eigenvalues of each such block are $\lambda_j$ and $\bar \lambda_j$. (Note that for real $\lambda_j$, this $2 \times 2$ block is diagonal with $\lambda_j$ repeated twice.) Therefore the spectrum of $\bm J \bm H$ coincides with $S$ because of the conjugation invariance in property (ii). 
\par Finally we construct the initialization $z_0$ and solution $z^*$ so that distance to optimality on each eigenspace is $\langle u_j, z_0-z^*\rangle = c_j$ where $c_j = \sqrt{\nu(\lambda_j)/2}$ if $\lambda_j$ is real and $c_j = \sqrt{\nu(\lambda_j)}$ otherwise, where $u_j$ is the eigenvector associated with the eigenvalue $\lambda_j$. (The extra factor of $2$ accounts for the double-counting of real eigenvalues described above.) This is achieved, for example, by the explicit construction $z^* = z_0 - \sum_j c_j ( e_{j} + e_{j + |S^+|} )$.

\par We now prove that this construction witnesses the desired lower bound for the convergence rate of any symmetric Krylov-subspace algorithm. Recall from~\cref{ssec:algs} that the $T$-th iterate $z_T$ of any such algorithm satisfies $z_T - z^* = p_T(\bm J \bm H)(z_0 - z^*)$ for some polynomial $p_T \in \cP_T$. Thus
\begin{align*}
\|z_T-z^*\|^2 &= \|p_T(\bm J\bm H)(z_0-z^*)\|^2 = \sum_{j} \left(|p_T(\lambda_j)| \cdot \langle u_j, z_0 - z^* \rangle \right)^2 =   \E_{\lambda \sim \nu} |p(\lambda)|^2 \geq R\,.
\end{align*}
Above, the first step is by definition of $p_T$, the second step is by block-diagonalizing, the third step is by definition of $\nu$, and the final step is by property (iii). 
\end{proof}

Combining~\cref{lem:adaptive1} and~\cref{lem:adaptive2} immediately implies~\cref{thm:SCSCLB_adapt}. It remains only to prove~\cref{lem:adaptive1}, which we do below.

\subsection{Proof of \cref{lem:adaptive1}}\label{ssec:adaptproof1}

\subsubsection{Helper lemmas}
We begin with three helper lemmas. The first constructs the support of $\nu$, which we denote by $S$. Note that for our purposes, $|S|$ need not be controlled so long as it is finite for every fixed $T$ and $\varepsilon$. 

\begin{lemma}[Finite mesh of $\sigma$]\label{lem:finite}
For every positive integer $T$ and error $\varepsilon > 0$, there exists a finite subset $S\subset \sigma$ satisfying
$$ (1-\varepsilon)\|p\|_{\sigma} \leq \|p\|_{S} $$
for all polynomials $p$ of degree at most $T$.
\end{lemma}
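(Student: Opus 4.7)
The plan is to reduce to the finite-dimensional space $V_T = \{p \in \C[\lambda] : \deg p \leq T\}$ and exploit compactness. First I would note that $\sigma = \{z : |z| \leq L,\,\Re(z) \geq \mu\}$ is a compact, convex subset of $\C$ (as the intersection of a closed disc and a closed half-plane), and that $p \mapsto \|p\|_\sigma$ is a genuine norm on $V_T$: $\sigma$ contains infinitely many points while a nonzero degree-$T$ polynomial has at most $T$ roots, so $\|p\|_\sigma = 0$ forces $p = 0$. The seminorm $p \mapsto \|p'\|_\sigma$ is continuous on $V_T$, and because all norms on the finite-dimensional space $V_T$ are equivalent, there is a constant $M = M(T,\mu,L)$ with
\[
\|p'\|_\sigma \;\leq\; M\,\|p\|_\sigma \qquad \text{for every } p \in V_T.
\]

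Next, convexity of $\sigma$ promotes this derivative bound to a uniform complex-Lipschitz estimate: parametrizing the segment from $z$ to $w$ inside $\sigma$ and integrating $p'$ gives
\[
|p(z) - p(w)| \;\leq\; M\,\|p\|_\sigma\,|z - w| \qquad \text{for all } z,w \in \sigma,\; p \in V_T.
\]
I would then set $\delta = \varepsilon/M$ and let $S \subset \sigma$ be any finite $\delta$-net, which exists by total boundedness of the compact set $\sigma \subset \R^2$.

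To conclude, for any nonzero $p \in V_T$ pick $\lambda^* \in \sigma$ with $|p(\lambda^*)| = \|p\|_\sigma$ (attained by compactness of $\sigma$ and continuity of $|p|$) and choose $s \in S$ with $|s - \lambda^*| \leq \delta$. The Lipschitz estimate then gives
\[
|p(s)| \;\geq\; |p(\lambda^*)| \;-\; M\,\|p\|_\sigma\,\delta \;=\; (1-\varepsilon)\,\|p\|_\sigma,
\]
so $\|p\|_S \geq (1-\varepsilon)\|p\|_\sigma$. The only nontrivial ingredient is the uniform derivative bound, but this is merely an instance of finite-dimensional norm equivalence; a quantitative $M$ could alternatively be extracted from Markov's or Bernstein's inequality on a disc containing $\sigma$, but no explicit value is needed here since $|S|$ is not constrained.
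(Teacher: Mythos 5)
Your proof is correct. It follows the same skeleton as the paper's argument (a uniform derivative bound for degree-$\le T$ polynomials, a $\delta$-net with $\delta$ proportional to $\varepsilon$ divided by that bound, and a Lipschitz step along a segment contained in $\sigma$), but you obtain the key derivative bound differently: you get $\|p'\|_\sigma \le M\|p\|_\sigma$ by soft finite-dimensional compactness (the sup of the continuous seminorm $p\mapsto\|p'\|_\sigma$ over the $\|\cdot\|_\sigma$-unit sphere of the degree-$\le T$ polynomials is finite), whereas the paper derives an explicit constant $M=\exp(Tg_{\max})$ by combining Cauchy's estimate on a $1$-neighborhood of $\sigma$ with the Bernstein--Walsh bound via the Green's function, and places the net only on $\partial\sigma$ after invoking the maximum modulus principle. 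Your route is more elementary and needs only compactness and convexity of $\sigma$ (convexity being what justifies integrating $p'$ along the segment, a point the paper uses implicitly as well); its cost is that $M$, and hence $|S|$, is non-explicit, but since the lemma places no constraint on $|S|$ this loses nothing here, while the paper's version has the minor virtue of a quantitative mesh size. One small caveat: your claim that $\|\cdot\|_\sigma$ is a norm on $V_T$ uses that $\sigma$ is infinite, which fails only in the degenerate case $\mu=L$ where $\sigma$ is a single point and the lemma is trivial with $S=\sigma$; a one-line remark would close that corner case.
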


\begin{proof}
Define $\sigma^+ = \{\lambda^+ : \exists \lambda \in \sigma,\, |\lambda^+ -\lambda|\leq 1 \}$ to be the set of points of distance at most 1 from $\sigma$. Since the closure $\text{cl}(\sigma^+\setminus \sigma)$ is compact and $g_{\sigma}$ is continuous on it, $g_{\max} = 
\|g_{\sigma}\|_{\text{cl}(\sigma^+\setminus  \sigma)}
$ is finite.
Let $S$ be any $\delta$-net of $\partial \sigma$ for $\delta = \varepsilon \exp( - T g_{\max})$; that is, let $S$ be a finite subset of $\partial \sigma$ such that for every point in $\partial \sigma$ there exists a point in $S$ within distance $\delta$.

Now fix any polynomial $p$ of degree at most $T$.
By Cauchy's estimate\footnote{For completeness, we recall here Cauchy's estimate: $|f^{(n)}(\lambda)| \leq \frac{n!}{r^n} \sup_{\lambda' :|\lambda'-\lambda|\leq r} |f(\lambda')|$ for any holomorphic $f$, $n\in \mathbb{N}$, $r>0$, and $\lambda\in \mathbb{C}$. A proof can be found, e.g., in~\citep[identity 2.14]{conway2012functions}.} and then an extremal growth bound for polynomials via Green's function (\cref{lem:green-poly})
$$
\|p'\|_{\sigma}
\leq \|p\|_{\sigma^+}
\leq \exp(T g_{\text{max}}) \|p\|_{\sigma}\,.
$$
Now let $\lambda^* \in \argmax_{\lambda \in \sigma} |p(\lambda)|$. By the maximum modulus principle, $\lambda^* \in \partial \sigma$. By definition of $S$ as a $\delta$-net of $\partial \sigma$, there exists $\lambda \in S$ for which $|\lambda - \lambda^*| \leq \delta$. By the above display, it follows that $$\big| |p(\lambda)| - |p(\lambda^*)|\big| \leq \delta \|p'\|_{\sigma} \leq \delta \exp(Tg_{\max}) \|p\|_{\sigma} = \varepsilon \|p\|_{\sigma}\,.$$
Since $|p(\lambda^*)| = \|p\|_{\sigma}$ by definition of $\lambda^*$, it follows that $|p(\lambda)| \geq (1 - \varepsilon)\|p\|_{\sigma}$. This proves the lemma since $\lambda \in S$ is arbitrary. 
\end{proof}

We also make use of the following two elementary helper lemmas about symmetry of polynomials along the real axis.
Below, recall the notation that $\cP_T$ is the linear space of polynomials of degree at most $T$ satisfying $p(0) = 1$, and let $\cR_T$ denote the subspace of $\cP_T$ containing only polynomials with real coefficients.

\begin{lemma}[Helper lemma 1] 
\label{lem:helper-real-max}
Suppose $S \subseteq \mathbb{C}$ is closed under conjugation. Then $$\min_{r \in \cR_T} \|r\|_S = \min_{p \in \cP_T} \|p\|_S\,.$$
\end{lemma}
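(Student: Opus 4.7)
The direction $\min_{r \in \cR_T} \|r\|_S \geq \min_{p \in \cP_T} \|p\|_S$ is immediate since $\cR_T \subseteq \cP_T$. For the reverse direction, the plan is a standard symmetrization: to each complex-coefficient $p \in \cP_T$, I associate a real-coefficient $r \in \cR_T$ of no larger sup-norm on $S$, using the conjugation-invariance of $S$ in an essential way.

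Concretely, given $p(\lambda) = \sum_{k=0}^{T} a_k \lambda^k \in \cP_T$, define the companion polynomial $\bar p(\lambda) = \sum_{k=0}^T \overline{a_k}\,\lambda^k$, equivalently $\bar p(\lambda) = \overline{p(\bar\lambda)}$. Note $\bar p \in \cP_T$, since $\bar p(0) = \overline{p(0)} = 1$ and $\deg(\bar p) \le T$. Now set
\[
r(\lambda) \;=\; \tfrac{1}{2}\bigl(p(\lambda)+\bar p(\lambda)\bigr) \;=\; \sum_{k=0}^T \Re(a_k)\,\lambda^k.
\]
Since the coefficients of $r$ are the real parts of those of $p$, the polynomial $r$ has real coefficients, satisfies $r(0)=1$, and has degree at most $T$, so $r \in \cR_T$.

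The payoff is $\|r\|_S \le \|p\|_S$. Indeed, fix any $\lambda \in S$; by the conjugation-invariance hypothesis on $S$, also $\bar\lambda \in S$. Hence
\[
|r(\lambda)| \;\le\; \tfrac{1}{2}|p(\lambda)| + \tfrac{1}{2}\bigl|\overline{p(\bar\lambda)}\bigr| \;=\; \tfrac{1}{2}|p(\lambda)| + \tfrac{1}{2}|p(\bar\lambda)| \;\le\; \|p\|_S.
\]
Taking the supremum over $\lambda \in S$ gives $\|r\|_S \le \|p\|_S$, and then infimizing over $p \in \cP_T$ yields $\min_{r\in\cR_T}\|r\|_S \le \min_{p\in\cP_T}\|p\|_S$. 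Combined with the trivial direction, this proves the lemma. I do not foresee any substantive obstacle: the only conceptual point is recognizing that averaging $p$ with the coefficient-conjugate $\bar p$ simultaneously (i) produces a real-coefficient polynomial and (ii) does not increase the sup-norm on any conjugation-invariant set. Attainment of the minima, if one wishes to write $\min$ rather than $\inf$, follows from continuity of $\|\cdot\|_S$ on the finite-dimensional affine space $\cP_T$ together with a routine restriction to a bounded sublevel set (e.g., $\{p \in \cP_T : \|p\|_S \le \|1\|_S\}$).
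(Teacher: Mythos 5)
Your proof is correct and follows essentially the same route as the paper: symmetrize $p$ with its coefficient-conjugate $\tilde p(\lambda)=\overline{p(\bar\lambda)}$, set $r=\tfrac12(p+\tilde p)\in\cR_T$, and use conjugation-invariance of $S$ to get $\|r\|_S\le\|p\|_S$. The only (harmless) differences are that you argue pointwise on $S$ rather than via $\|\tilde p\|_S=\|p\|_S$, and you spell out the trivial inclusion direction and attainment of the minima, which the paper leaves implicit.
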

\begin{proof}
Denote $\tilde{p} = \overline{p(\overline{\lambda})}$ and $r = \tfrac{1}{2} (p + \tilde{p})$. Since $S$ is closed under conjugation, $\|p\|_S = \|\tilde{p}\|_S$. Thus $\|r\|_S = \tfrac{1}{2} \, \|p + \tilde{p}\|_S \leq \max\{\|p\|_S,\, \|\tilde{p}\|_S \} = \|p\|_S$. Finally, to check that $r$ has real coefficients, note that if $p(\lambda) = \sum_{t} c_t \lambda^t$, then $\tilde{p}(\lambda) = \sum_t \bar{c}_t\lambda^t$, hence $r = \sum_t \Re(c_t) \lambda^t$.
\end{proof}

\begin{lemma}[Helper lemma 2]\label{lem:helper-real-avg}
    Suppose $\nu$ is invariant under conjugation. Then
    \begin{align*}
        \min_{r \in \cR_T} \E_{\lambda \sim \nu} |r(\lambda)|^2 
        =   
        \min_{p \in \cP_T} \E_{\lambda \sim \nu} |p(\lambda)|^2 
    \end{align*}
\end{lemma}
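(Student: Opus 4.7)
The plan is to follow the same symmetrization idea that proved Helper lemma 1, but with the sup norm replaced by the $L^2(\nu)$ norm, so that Cauchy--Schwarz takes the place of the triangle inequality. The direction $\min_{r\in\cR_T}\E_\nu|r|^2 \geq \min_{p\in\cP_T}\E_\nu|p|^2$ is immediate because $\cR_T \subseteq \cP_T$; all the content lies in the reverse inequality, which I would establish by exhibiting, for each $p \in \cP_T$, a real-coefficient $r \in \cR_T$ with $\E_\nu|r|^2 \leq \E_\nu|p|^2$.

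Given $p(\lambda) = \sum_t c_t \lambda^t \in \cP_T$, I would set
\[
\tilde p(\lambda) \;=\; \overline{p(\bar\lambda)} \;=\; \sum_t \bar c_t\, \lambda^t, \qquad r \;=\; \tfrac{1}{2}(p + \tilde p) \;=\; \sum_t \Re(c_t)\,\lambda^t.
\]
Then $r$ has real coefficients, degree at most $T$, and $r(0) = \Re(c_0) = \Re(1) = 1$, so $r \in \cR_T$. The conjugation invariance of $\nu$ gives the key identity
\[
\E_\nu |\tilde p(\lambda)|^2 \;=\; \E_\nu |p(\bar\lambda)|^2 \;=\; \E_\nu |p(\lambda)|^2,
\]
where the last equality is the change of variables $\lambda \mapsto \bar\lambda$ under a conjugation-invariant measure.

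With the complex inner product $\langle f,g\rangle_{L^2(\nu)} = \E_\nu f\bar g$ and the norm $\|f\| = \langle f,f\rangle^{1/2}$, I would then expand
\[
\|r\|^2 \;=\; \tfrac{1}{4}\bigl(\|p\|^2 + \|\tilde p\|^2 + 2\Re\langle p,\tilde p\rangle\bigr) \;\leq\; \tfrac{1}{4}\bigl(\|p\|^2 + \|\tilde p\|^2 + 2\|p\|\,\|\tilde p\|\bigr) \;=\; \|p\|^2,
\]
where the inequality is Cauchy--Schwarz and the final equality uses $\|\tilde p\| = \|p\|$ from the previous display. Taking the infimum over $p \in \cP_T$ yields $\min_{r\in\cR_T}\E_\nu|r|^2 \leq \min_{p\in\cP_T}\E_\nu|p|^2$, completing the proof.

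I do not anticipate any real obstacle here: the construction of $r$ is identical to that of Helper lemma 1, and the only new ingredient is trading the triangle inequality $\|(p+\tilde p)/2\|_S \le \max\{\|p\|_S,\|\tilde p\|_S\}$ for the corresponding Cauchy--Schwarz bound in $L^2(\nu)$. Conjugation invariance of $\nu$ is used in exactly the same way as closure of $S$ under conjugation was used in the sup-norm version, namely to certify $\|\tilde p\| = \|p\|$.
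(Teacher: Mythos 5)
Your proposal is correct and follows essentially the same route as the paper: both prove the nontrivial direction by symmetrizing $p$ to $r=\tfrac12(p+\tilde p)$ with $\tilde p(\lambda)=\overline{p(\bar\lambda)}$ and using conjugation invariance of $\nu$ to control the resulting norm. The only (cosmetic) difference is that you bound $\E_{\nu}|r|^2$ globally via Cauchy--Schwarz in $L^2(\nu)$ after noting $\E_\nu|\tilde p|^2=\E_\nu|p|^2$, whereas the paper uses the pointwise inequality $|r(\lambda)|^2+|r(\bar\lambda)|^2\le|p(\lambda)|^2+|p(\bar\lambda)|^2$ on conjugate pairs and then takes expectations.
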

\begin{proof}
    The direction ``$\geq$'' is obvious since $\cP_T \supset \cR_T$. For the direction ``$\leq$'', observe that by linearity of expectation, it suffices to show that for any $p \in \cP_T$, there exists $r \in \cR_T$ satisfying 
$        |r(\lambda)|^2 + |r(\bar{\lambda})|^2 \leq 
        |p(\lambda)|^2 + |p(\bar{\lambda})|^2  $
    for all $\lambda \in \C$. To this end, define $r$ as in the proof of~\cref{lem:helper-real-max}; i.e., let $\tilde{p} = \overline{p(\overline{\lambda})}$ and $r = (p + \tilde{p})/2$. Then $r \in \cR_T$ (as shown there) and 
    satisfies the desired inequality since
    \begin{align*}
        |r(\lambda)|^2 + |r(\bar{\lambda})|^2
        = 
        2|r(\lambda)|^2
        =
        \frac{1}{2}\, |p(\lambda) + p(\bar{\lambda})|^2
        \leq
        |p(\lambda)|^2 + |p(\bar{\lambda})|^2\,.
    \end{align*}
    Above, the first step is because $r(\lambda)$ and $r(\bar{\lambda})$ are complex conjugates and thus have the same magnitude; the second step is by definition of $r$; and the final step is by the elementary inequality $|a+b|^2 \leq 2(|a|^2 + |b|^2)$ for any $a,b \in \C$.
\end{proof}

\subsubsection{Combining the helper lemmas}

\begin{proof}[Proof of~\cref{lem:adaptive1}]
    Let $S$ be the finite set in~\cref{lem:finite}. Without loss of generality, suppose $S$ is closed under conjugation (since otherwise we can include all conjugates). For shorthand, let $\cMsym$ denote the subset of probability distributions in $\cM(S)$ that are invariant under conjugation. 
    \par Using in order: the definition of $S$,~\cref{lem:helper-real-max}, linearity of expectation, Sion's minimax theorem, a symmetrization argument (replacing $\nu(\lambda)$ by $(\nu(\lambda) + \nu(\bar{\lambda}))/2$), and then~\cref{lem:helper-real-avg}, we conclude
    \begin{align*}
        (1-\varepsilon)^2\, \min_{p\in \mathcal{P}_T} \max_{\lambda \in \sigma} |p(\lambda)|^2 
         &\leq 
         \min_{p\in \mathcal{P}_T} \max_{\lambda \in S} |p(\lambda)|^2 
         \\ &=  \min_{r \in \mathcal{R}_T} \max_{\lambda \in S} |r(\lambda)|^2
         \\ &=  \min_{r \in \mathcal{R}_T} \max_{\nu \in \cM(S)} \E_{\lambda \sim \nu} |r(\lambda)|^2
         \\ &= \max_{\nu \in \cM(S)}  \min_{r \in \mathcal{R}_T} \E_{\lambda \sim \nu} |r(\lambda)|^2
         \\ &= \max_{\nu \in \cMsym(S)}  \min_{r \in \mathcal{R}_T} \E_{\lambda \sim \nu} |r(\lambda)|^2
         \\ &= \max_{\nu \in \cMsym(S)}  \min_{p \in \mathcal{P}_T} \E_{\lambda \sim \nu} |p(\lambda)|^2\,.
    \end{align*}
    Let $\nu$ be an optimal solution to the final expression; existence is guaranteed by compactness. Then $\nu$ satisfies all three desired properties: the first by finiteness of $S$, the second because $\nu \in \cMsym(S)$, and the third by the above display.
\end{proof}

\paragraph*{Acknowledgements.} We are grateful to Joel Tropp for helpful discussions about the literature. JMA acknowledges funding from a Sloan Research Fellowship and a Seed Grant Award from Apple.

 \small
\addcontentsline{toc}{section}{References}
\bibliographystyle{plainnat}
\bibliography{refs}
	
\end{document}